\newtheorem{thm}{Theorem}[section]
\newtheorem{lem}[thm]{Lemma}
\newtheorem{cor}[thm]{Corollary}
\newtheorem{prop}[thm]{Proposition}
\newtheorem{rmk}[thm]{Remark}
\begin{document}
\title{\bf Blocks with the hyperfocal subgroup
$\mathbb Z_{2^n}\times \mathbb Z_{2^n}$}

\author{Xueqin Hu and Yuanyang Zhou}

\date{\small
School of Mathematics and Statistics, Central China Normal University,
Wuhan 430079, China}
\maketitle

\begin{abstract}

In this paper, we calculate the numbers of irreducible ordinary characters and irreducible Brauer characters in a block of a finite group $G$,
whose associated fusion system over a $2$-subgroup $P$ of $G$
(which is a defect group of the block) has hyperfocal subgroup
$\mathbb Z_{2^n}\times \mathbb Z_{2^n}$ for a positive integer number $n$,
when the block is controlled by the normalizer $N_G(P)$ and the hyperfocal subgroup is contained in the center of $P$,
or when the block is not controlled by $N_G(P)$ and
the hyperfocal subgroup is contained in the center of the unique essential subgroup in the fusion system
and has order at most $16$.
In particular, Alperin's weight conjecture holds in the considered cases.

\end{abstract}
%------------------------------------------------------------------------
\section{Introduction}
\bigskip\quad\, Let $p$ be a prime, and
$\mathcal{O}$ a complete discrete valuation ring with
an algebraically closed residue field $k$ of characteristic $p$
and with a fraction field $\mathcal{K}$ of characteristic 0.
We further assume that $\mathcal{K}$ is big enough for finite groups discussed below.
Let $G$ be a finite group, and $b$ a block (idempotent) of $G$ over $\mathcal{O}$ with defect group $P$.
Let $(P, \,b_P)$ be a maximal $b$-Brauer pair.
Denote by $Q$ the subgroup of $P$ generated by
the subsets $[U,\,x]$,
where $(U,\,b_U)$ runs on the set of $b$-Brauer pairs
such that $(U,\,b_U)\subseteq(P,\,b_P)$
and $x$ on the set of $p^\prime$-elements of $N_G(U,\,b_U)$
and $[U,\,x]$ denotes the set of commutators $uxu^{-1}x^{-1}$ for $u\in U$.
The subgroup $Q$ is called the hyperfocal subgroup of
$P$ with respect to $(P,\, b_P)$ (see \cite{P00}).
Let $c$ be the Brauer correspondent of $b$ in $N_G(Q)$.
Rouquier conjectures that the block algebras $\mathcal O Gb$ and
$\mathcal O N_G(Q)c$ are basically Rickard equivalent when $Q$ is abelian (see \cite{R98}).

Denote by $l(b)$ \big(resp. $k(b)$\big) the number of irreducible Brauer
(resp. ordinary) characters belonging to the block $b$.
Rouquier's conjecture implies that $l(b)$ and $k(b)$ should be equal to $l(c)$ and $k(c)$ respectively when $Q$ is abelian.
Recently, Watanabe proved that
$l(b)$ and $k(b)$ are equal to $l(c)$ and $k(c)$ respectively
when $Q$ is cyclic (see \cite[Theorem 1]{W14}).
In addition, she proved that the block $b$ is controlled by $N_G(P)$ and
that $l(b)$ and $k(b)$ are equal to $l(b_0)$ and $k(b_0)$ respectively,
where $b_0$ denotes the Brauer correspondent of $b$ in $N_G(P)$. The block $b$ is controlled by a subgroup $H$ of $G$ if any morphism in the Brauer category ${\cal F}_{(P, \,b_P)}(G, \,b)$ is induced by the conjugation of some element in $H$ (see \cite[\S 49]{T95}).

In this paper, we investigate the numbers $l(b)$ and $k(b)$
when $Q$ is $\mathbb Z_{2^n}\times \mathbb Z_{2^n}$,
where $\mathbb Z_{2^n}$ denotes a cyclic group of order $2^n$.
In this situation, the block $b$ is not necessarily controlled by $N_G(P)$ in general.
When the block $b$ is not controlled by $N_G(P)$,
by induction the investigation is reduced to the investigation of the numbers $l(b)$ and $k(b)$ when $P$ is the wreath product $Q\wr \mathbb Z_2$.
The latter investigation is difficult (see \cite{K80} for details),
but we get the numbers $l(b)$ and $k(b)$ when $n\leq 2$.
Our main theorem is stated as the following.

\begin{thm}\label{MT}
Keep the notation above and assume that $Q$ is $\mathbb Z_{2^n}\times \mathbb Z_{2^n}$ for a positive integer number $n$.

\noindent{\bf (i)} Assume that the block $b$ is controlled by $N_G(P)$ and that $Q$ is contained in the center of $P$.
Then we have $l(b)=l(c)=l(b_0)=3$ and $k(b)=k(c)=k(b_0)$,
where $b_0$ is the Brauer correspondent of the block $b$ in $N_G(P)$.

\noindent{\bf (ii)} Assume that the block $b$ is not controlled by $N_G(P)$.
Then the Brauer category ${\cal F}_{(P, \,b_P)}(G, \,b)$ of the block $b$,
whose objects are all $b$-Brauer pairs contained in a fixed maximal $b$-Brauer pair
$(P, \,b_P)$, has a unique essential object $(S,\, b_S)$.
Further assume that $Q$ is contained in the center of $S$ and
that $|Q|$ is less than $16$.
Then we have $l(b)=l(c)=2$ and $k(b)=k(c)$.
\end{thm}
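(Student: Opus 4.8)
\emph{Proof proposal.} In both parts the plan is to pin down the fusion system $\mathcal F=\mathcal F_{(P,\,b_P)}(G,\,b)$ exactly, then use Fong--Reynolds together with the block theory of normal subgroups to reduce to a minimal configuration whose invariants are either classical or can be read off a twisted group algebra; the equalities with $l(c),k(c)$ (and $l(b_0),k(b_0)$) then come out of the structure so obtained.

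\emph{Part (i).} Since $b$ is controlled by $N_G(P)$, the Brauer category has no proper essential object, so $\mathcal F$ is the fusion system of $N_G(P)$ on $P$, is determined by the inertial quotient $E=N_G(P,\,b_P)/PC_G(P)$ acting on $P$, and $Q=[P,\,E]$. First I analyse this action. The group $E$ is a $2'$-group acting \emph{faithfully} on $P$ (an element of $N_G(P,\,b_P)$ centralising $P$ lies in $C_G(P)\subseteq PC_G(P)$), with $[P,\,E]=Q\subseteq Z(P)$. Coprimality gives $[P,\,E]=[P,\,E,\,E]$, hence $[Q,\,E]=Q$, so $E$ does not centralise $Q$; moreover for $x\in C_E(Q)$ one has $[P,\,x]\subseteq Q\subseteq Z(P)$ and therefore $[P,\,x]=[P,\,x,\,x]\subseteq[Q,\,x]=1$, so $C_E(Q)\subseteq C_E(P)=1$. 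Thus $E$ embeds into the $2'$-part of $\operatorname{Aut}(\mathbb Z_{2^n}\times\mathbb Z_{2^n})\cong GL_2(\mathbb Z/2^n\mathbb Z)$, which has order $3$ (the reduction to $GL_2(\mathbb Z/2\mathbb Z)\cong S_3$ has $2$-group kernel), so $E\cong\mathbb Z_3$ and acts fixed-point-freely on $Q$. Now $Q\subseteq Z(P)$ gives $P\subseteq C_G(Q)\subseteq N_G(Q)$, so $c$ again has defect group $P$, and both $b_0$ and $c$ are controlled by the relevant normaliser of $P$ with inertial quotient $\mathbb Z_3$ (for $c$, again because $Q$ is central in $P$); by Fong--Reynolds each reduces to a block with normal defect group $P$ and inertial quotient $\mathbb Z_3$, hence by K\"ulshammer's theorem is Morita equivalent to $\mathcal O_\alpha[P\rtimes\mathbb Z_3]$ with $\alpha$ cohomologically trivial (the Schur multiplier of $\mathbb Z_3$ vanishes). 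As $P$ is then a normal $2$-subgroup it acts trivially on every simple module, so the simple $k[P\rtimes\mathbb Z_3]$-modules are the three simple $k\mathbb Z_3$-modules; hence $l(b_0)=l(c)=3$ and $k(b_0)=k(c)=k(P\rtimes\mathbb Z_3)$ (this group having a unique $2$-block), computed from Clifford theory over $P$. The remaining point $l(b)=l(b_0)$, $k(b)=k(b_0)$ is the heart of part (i), and is where $Q\subseteq Z(P)$ is essential: one must show that a block controlled by $N_G(P)$ with central hyperfocal subgroup and inertial quotient $\mathbb Z_3$ is \emph{inertial}, its source algebra being that of $b_0$ --- the only obstruction being a cohomology class of $\mathbb Z_3$, which vanishes --- or else one appeals to the splendid Rickard equivalence available in this situation. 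This step is the one most likely to require real work.

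\emph{Part (ii).} First I would determine the Brauer category. Every essential object $(R,\,b_R)$ is $\mathcal F$-centric with $\operatorname{Out}_{\mathcal F}(R)$ admitting a strongly $2$-embedded subgroup, and contributes $[R,\,O^2(\operatorname{Aut}_{\mathcal F}(R))]$ to the rank-$2$ group $Q\cong\mathbb Z_{2^n}\times\mathbb Z_{2^n}$; together with $Q\subseteq Z(S)$ and the fact that the $2'$-part of $\operatorname{Aut}(\mathbb Z_{2^n}\times\mathbb Z_{2^n})$ has order $3$, a case analysis of the saturated fusion systems forces: a unique essential object $(S,\,b_S)$, with $S$ abelian, $\operatorname{Out}_{\mathcal F}(S)\cong S_3$ (here $\mathbb Z_3\wr\mathbb Z_2$ is excluded, as its action on $S$ would be unfaithful once the hyperfocal subgroup has been forced to rank $2$), $\operatorname{Aut}_P(S)$ of order $2$ inverting the normal $\mathbb Z_3$, and $P$ of order $2\,|S|$ of wreath-product type over $Q$. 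Reduction to the minimal such $P$ --- which is $P\cong Q\wr\mathbb Z_2$, or, in a few small cases, a group of order $2\,|Q|$ such as $D_8$ when $|Q|=4$ --- is then carried out by peeling off, via Clifford theory, the normal-subgroup layers of $G$ over which $b$ is nilpotent; the same analysis applied to $c$ inside $N_G(Q)$ (where $S$ is normal) gives $c$ the same defect group and hyperfocal subgroup, and, through the successive Brauer correspondences and a perfect isometry, matching $l$ and $k$ with $b$. Finally, when $|Q|<16$, i.e. $Q=\mathbb Z_2\times\mathbb Z_2$, there remain only finitely many configurations, with $P\cong D_8$ or $P\cong Q\wr\mathbb Z_2$ (of order $32$), and in each one I would identify $\mathcal OGb$, up to basic Morita equivalence, with an already-understood block: for $P\cong D_8$ via Erdmann's classification of blocks with defect group $D_8$ (the relevant one being the principal $2$-block of $S_4$), and for the order-$32$ case via the principal $2$-block of a group $V\rtimes S_3$ with $V$ elementary abelian, in which $O_2$ acts trivially on all simple modules, leaving exactly the two simple $kS_3$-modules in characteristic $2$. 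This yields $l(b)=l(c)=2$ and $k(b)=k(c)$. The main obstacle here is precisely this last identification: for $D_8$ the classification of blocks is available, but already for the wreath-product defect group (and for $\mathbb Z_4\wr\mathbb Z_2$ when $|Q|=16$) it must be done directly, which is the source of the difficulty flagged in the introduction via \cite{K80}; the remaining, routine, difficulty is the bookkeeping needed to keep $b$, $b_0$ and $c$ matched through the Fong--Reynolds and Brauer-correspondence steps.
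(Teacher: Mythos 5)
There is a genuine gap, and it sits exactly at the point you yourself flag as ``the heart'' of the argument. In part (i) you compute $E\cong\mathbb Z_3$ and (via K\"ulshammer's theorem on blocks with normal defect group) $l(b_0)=3$, but you then leave $l(b)=l(b_0)$ and $k(b)=k(b_0)$ to ``show the block is inertial'' or to ``the splendid Rickard equivalence available in this situation.'' No such equivalence is available: that is precisely Rouquier's conjecture quoted in the introduction, and the block need not be known to be inertial under these hypotheses (the paper invokes Zhou's inertiality criterion \cite{Z16'} only in one special local subcase, where $b_R$ is nilpotent and $G/PC_G(R)$ is a $3$-group). A second, smaller, problem is your treatment of $c$: $P$ is not normal in $N_G(Q)$, so Fong--Reynolds plus K\"ulshammer does not give $l(c)=3$ directly; computing $l(c)$ requires the same work as computing $l(b)$ (the paper gets it because $c$ inherits the hypotheses of the theorem, via Watanabe's \cite[Theorem 2]{W14}). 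The same defect recurs in part (ii): the proposed identification of $\mathcal{O}Gb$ up to (basic) Morita equivalence with the principal block of $\mathrm{S}_4$, or of $V\rtimes\mathrm{S}_3$, is unproven and far stronger than anything known --- for wreathed defect groups exactly this absence of a classification is the difficulty in \cite{K80} --- so the values $l(b)=2$, $k(b)=k(c)$ are not actually derived.

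The paper's route avoids any global equivalence. It computes $l(b)$ by induction on $|G|$ through the lower-defect-group formula $l(b)=\sum_{T}m\big((b_T)^{N_G(T,\,b_T)},\,T\big)$ (equality (\ref{formular of l(b)})): one shows that the only $b$-Brauer pairs $(T,\,b_T)$ with nonzero multiplicity are $(P,\,b_P)$ and $(C_P(E),\,\cdot)$ in case (i), respectively $(P,\,b_P)$ and $(C_S(E_S),\,\cdot)$ in case (ii), and one evaluates these multiplicities by passing to quotients by normal $2$-subgroups with $|G:C_G(R)|$ a $2$-power (where the Cartan matrix scales, Lemma \ref{ldg&qg}), by the case $P=Q$ (Brauer \cite{B71}, \cite{EKKS}), by K\"ulshammer's counting results for $\mathbb Z_4\wr\mathbb Z_2$ \cite{K80}, and by Sambale's Cartan-matrix computations \cite{S14}, \cite{S16}. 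The equalities $k(b)=k(b_0)$ and $k(b)=k(c)$ are then obtained locally, by matching $l(b_u)$ with $l(b^0_u)$ (resp.\ $l(f)$ with $l(f')$) over Brauer elements $u\in P$, using the isomorphism of local Brauer categories from \cite[Corollary 3.6]{P01} --- not from any equivalence between $\mathcal{O}Gb$ and a known block. Your fusion-theoretic preliminaries (the structure of $E$, the shape of the essential object, $P$ of wreath type over $Q$) do agree with the paper's Section 2, but without replacing the inertial/Rickard/Morita claims by an argument of this counting type (or by some other proof), the proposal does not establish the theorem.
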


The special case $n=1$ of Theorem \ref{MT} has been done by \cite{T}.

%\begin{rmk}
%If $Q$ is a Klein four group,
%by Proposition \ref{fusion system} below
%$Q$ satisfies the hypotheses of Theorem \ref{MT}.
%Then for the blocks with the Klein four hyperfocal subgroup,
%we have $l(b)=l(c)=l(b_0)=3$ and $k(b)=k(c)=k(b_0)$
%when the block $b$ is controlled by $N_G(P)$
%and
%$l(b)=l(c)=2$ and $k(b)=k(c)$
%when the block $b$ is not controlled by $N_G(P)$.
%\end{rmk}

\begin{rmk}\rm
There are examples where the hypotheses of
Theorem \ref{MT} are not satisfied.
Assume that $Q=\mathbb{Z}_4\times\mathbb{Z}_4$.
The automorphism group $\mathrm{Aut}(Q)$ of $Q$
has a cyclic subgroup of order $6$.
Set $L=Q\rtimes\mathbb{Z}_6$,
where $\mathbb{Z}_6$ is a cyclic group of order $6$.
Take any $2^\prime$-group $H$.
Denote by $G$ the wreath product $H\wr L$.
Let $b$ be the principal block of $G$
and $P$ a Sylow $2$-sugbroup containing $Q$.
The block $b$ is controlled by $N_G(P)$ and $Q$ is a hyperfocal subgroup, but $Q$ is not a central subgroup of $P$.
On the other hand,
$\mathrm{Aut}(Q)$ contains a subgroup
$\mathbb{Z}_2\times\mathrm{S}_3$,
where $\mathrm{S}_3$ is the symmetric group of degree $3$.
Set $K=Q\rtimes(\mathbb{Z}_2\times\mathrm{S}_3)$ and $\tilde{G}=H\wr K$.
Let $\tilde{b}$ be the principal block of $\tilde{G}$
and $\tilde{P}$ a Sylow $2$-sugbroup containing $Q$.
The subgroup $Q\rtimes\mathbb{Z}_2$ is the unique essential subgroup $S$ of $\tilde{P}$,
$Q$ is a hyperfocal subgroup
and the block $\tilde{b}$ is controlled by $N_{\tilde{G}}(S)$.
But $Q$ is not in the center of $S$.
\end{rmk}

Finally we claim that throughout the rest of the paper,
the notation in this section will always be kept and $Q$ is
$\mathbb Z_{2^n}\times \mathbb Z_{2^n}$ for a positive integer number $n$.
In particular, $p=2$.

%--------------------------------------------------------------
\section{The local structure of the block $b$}

\bigskip\quad\, In this section, we investigate the Brauer category
$\mathcal{F}_{(P,\, b_P)}(G,\, b)$.
Let $Q_0$ be the subgroup $\mathbb Z_2\times \mathbb Z_2$ of $Q$.
The following lemma is trivial.

\begin{lem}\label{Aut(Q)}
The automorphism group $\mathrm{Aut}(Q)$ of $Q$ is a $\{2,3\}$-group
 and the automorphism group $\mathrm{Aut}(Q_0)$ of $Q_0$ is isomorphic to
 ${\rm S}_3$, the symmetric group of degree $3$.
 If $\theta\in\mathrm{Aut}(Q)$ has order three,
 then
 $\theta$ transitively permutes three involutions in $Q_0$
 and generates a Sylow $3$-subgroup of $\mathrm{Aut}(Q)$.
\end{lem}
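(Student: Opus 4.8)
The plan is to pass to matrices. Fixing a basis of $Q\cong(\mathbb{Z}/2^n\mathbb{Z})^2$ identifies $\mathrm{Aut}(Q)$ with $\mathrm{GL}_2(\mathbb{Z}/2^n\mathbb{Z})$, and a routine count pins down its order: reduction modulo $2$ yields a surjection $\mathrm{GL}_2(\mathbb{Z}/2^n\mathbb{Z})\twoheadrightarrow\mathrm{GL}_2(\mathbb{F}_2)$ whose kernel $\{A\in\mathrm{GL}_2(\mathbb{Z}/2^n\mathbb{Z}):A\equiv I\bmod 2\}$ has order $(2^{n-1})^4$, while $|\mathrm{GL}_2(\mathbb{F}_2)|=6$; hence $|\mathrm{Aut}(Q)|=2^{4n-3}\cdot 3$. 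In particular $\mathrm{Aut}(Q)$ is a $\{2,3\}$-group and its Sylow $3$-subgroups have order $3$. The same identification applied to $Q_0\cong\mathbb{F}_2^2$ gives $\mathrm{Aut}(Q_0)\cong\mathrm{GL}_2(\mathbb{F}_2)$, and since $\mathrm{GL}_2(\mathbb{F}_2)$ acts faithfully on the three nonzero vectors of $\mathbb{F}_2^2$, which are precisely the three involutions of $Q_0$, this realises $\mathrm{Aut}(Q_0)$ as the full symmetric group $\mathrm{S}_3$ on those involutions.

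For the last assertion I would use that $Q_0=\{x\in Q:x^2=1\}=2^{n-1}Q$ is characteristic in $Q$, so restriction defines a homomorphism $\rho\colon\mathrm{Aut}(Q)\to\mathrm{Aut}(Q_0)$; reading it off in coordinates shows that $\rho$ is exactly the reduction modulo $2$ above, so $\ker\rho$ is the $2$-group just described. Thus if $\theta\in\mathrm{Aut}(Q)$ has order $3$, then $\theta\notin\ker\rho$ while $\rho(\theta)^3=1$; hence $\rho(\theta)$ is an element of order $3$ in $\mathrm{S}_3$, i.e.\ a $3$-cycle on the three involutions of $Q_0$, which is precisely the claimed transitivity. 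Finally $\langle\theta\rangle$ has order $3=|\mathrm{Aut}(Q)|_3$, so it is a Sylow $3$-subgroup of $\mathrm{Aut}(Q)$.

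There is no genuine obstacle here, the statement being flagged as trivial; the only point needing a moment's attention is the order computation, specifically that the congruence kernel of $\mathrm{GL}_2(\mathbb{Z}/2^n\mathbb{Z})\to\mathrm{GL}_2(\mathbb{F}_2)$ is a $2$-group. Once that is in hand, faithfulness of the $\mathrm{GL}_2(\mathbb{F}_2)$-action on nonzero vectors, together with the observation that an automorphism of order $3$ cannot lie in a $2$-group, delivers the remaining claims at once.
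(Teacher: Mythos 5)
Your proof is correct: the identification $\mathrm{Aut}(Q)\cong\mathrm{GL}_2(\mathbb{Z}/2^n\mathbb{Z})$ with order $3\cdot 2^{4n-3}$, the identification $\mathrm{Aut}(Q_0)\cong\mathrm{GL}_2(\mathbb{F}_2)\cong\mathrm{S}_3$ acting on the three involutions, and the observation that the restriction map to the characteristic subgroup $Q_0$ has $2$-group kernel (the mod-$2$ congruence kernel) correctly yield all three assertions. The paper offers no proof to compare against—it simply declares the lemma trivial—and your argument is exactly the standard verification one would expect.
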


For any subgroup $T$ of $P$,
we denote by $(T,\,b_T)$ the $b$-Brauer pair contained in $(P,\, b_P)$.
We denote by $N_G(T,\,b_T)$ the normalizer of $(T,\,b_T)$
under the $G$-conjugation.
The block $b_T$ of $C_G(T)$ is also a block of $N_G(T,\,b_T)$.
Since $P$ is contained in $N_G(Q,\, b_Q)$ and $P$ is a defect group of $b$,
$P$ has to be a defect group of the block $b_Q$ of $N_G(Q,\, b_Q)$.
Similarly, $P$ is also a defect group of the block
$b_{Q_0}$ of $N_G(Q_0,\, b_{Q_0})$.
Set $P_0=C_P(Q_0)$.
Then $P_0$ is a defect group of the block $b_{Q_0}$ of $C_G(Q_0)$.

\begin{lem}\label{nil}
The block $b_{Q_0}$ of $C_G(Q_0)$ is nilpotent.
\end{lem}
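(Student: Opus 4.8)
The plan is to show that the block $b_{Q_0}$ of $C_G(Q_0)$ is nilpotent by exhibiting that its Brauer category is trivial, i.e. that for every $b_{Q_0}$-Brauer pair $(R,\,e)$ contained in a maximal one, the group $N_{C_G(Q_0)}(R,\,e)/C_{C_G(Q_0)}(R)$ is a $2$-group. Equivalently, by Broué--Puig's characterization of nilpotent blocks, it suffices to check that the inertial quotient at every Brauer pair is trivial. First I would record the key structural fact: $Q_0 = \mathbb Z_2\times\mathbb Z_2$ is a central subgroup of $C_G(Q_0)$, and $P_0 = C_P(Q_0)$ is a defect group of $b_{Q_0}$, with $Q_0\leq Z(P_0)$. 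Since $Q_0$ is central in $C_G(Q_0)$, any $b_{Q_0}$-Brauer pair $(R,\,e)$ has $R\geq$ something containing $Q_0$ up to the usual conventions; more to the point, $C_G(R)\leq C_G(Q_0)$ and the relevant automiser is a subquotient of $\mathrm{Aut}(R)$ that must fix $Q_0$ pointwise.

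The main idea is to exploit that $Q=\mathbb Z_{2^n}\times\mathbb Z_{2^n}$ sits inside everything and that the only odd-order automorphisms available act on $Q_0$. By Lemma \ref{Aut(Q)}, $\mathrm{Aut}(Q)$ is a $\{2,3\}$-group and a nontrivial $3$-element of $\mathrm{Aut}(Q)$ acts nontrivially on $Q_0$ (it transitively permutes the three involutions of $Q_0$). Hence any automorphism of a subgroup $R\leq P$ with $R\geq Q_0$ that has order $3$ and is induced inside $N_G(R,\,e)$ would, upon restriction, move the involutions of $Q_0$. But inside $C_G(Q_0)$ every element centralizes $Q_0$, so no such $3$-element can occur in $N_{C_G(Q_0)}(R,\,e)/C_{C_G(Q_0)}(R)$. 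Therefore the inertial quotient of $b_{Q_0}$ at every Brauer pair is a $2$-group, and since $\mathrm{Aut}(P_0)$-type obstructions are likewise $\{2,3\}$-constrained (as $P_0$ lies between $Q_0$ and $P$, and $\mathrm{Aut}$ of the relevant sections is a $\{2,3\}$-group by the same token), all local inertial quotients are trivial. This forces $b_{Q_0}$ to be nilpotent.

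Concretely, the steps I would carry out are: (1) observe $Q_0\leq Z(C_G(Q_0))$, so by Brauer's first main theorem $b_{Q_0}$ corresponds to a block with defect group $P_0=C_P(Q_0)$ and $Q_0\leq Z(P_0)$; (2) for each $b_{Q_0}$-Brauer pair $(R,\,e)$ with $Q_0\leq R\leq P_0$, analyze the automiser $N_{C_G(Q_0)}(R,\,e)/R\,C_{C_G(Q_0)}(R)$ and note its elements fix $Q_0$; (3) invoke Lemma \ref{Aut(Q)} together with the fact that $Q\cap R$ (or the relevant abelian section containing $Q_0$) has automorphism group a $\{2,3\}$-group in which every $3$-element moves $Q_0$, to conclude the automiser is a $2$-group; (4) since this holds at every Brauer pair, the block $b_{Q_0}$ has trivial inertial quotients and hence is nilpotent by the Broué--Puig criterion.

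The hard part will be step (3): making rigorous that the only source of odd-order automorphisms in the fusion system of $b_{Q_0}$ is the action on $Q_0$-like sections, so that centralizing $Q_0$ really does kill all of them. This requires a careful identification of which subgroups $R$ can be $b_{Q_0}$-Brauer pairs — one must use that $P$ has $Q$ of index $2$ in the non-controlled case, or is abelian-adjacent in the controlled case, so that any $R$ either lies in $Q$ (where $\mathrm{Aut}(R)$ is a $\{2,3\}$-group acting faithfully enough on $Q_0$) or is a $2$-group extension thereof with no room for a fixed-point-free $3$-element. I expect this to follow from a short case analysis on $R\cap Q$ using Lemma \ref{Aut(Q)}, but it is the step where the specific structure $Q=\mathbb Z_{2^n}\times\mathbb Z_{2^n}$ is genuinely used.
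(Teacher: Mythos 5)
Your key observation (a $3$-element of $\mathrm{Aut}(Q)$ must permute the three involutions of $Q_0$, so nothing in $C_G(Q_0)$ can induce it) is genuinely part of the paper's argument, but you are deploying it at the wrong level, and as a result step (3) of your plan has a real gap. You want every automiser $N_{C_G(Q_0)}(R,\,e)/R\,C_{C_G(Q_0)}(R)$, for $R\leq P_0$ an arbitrary Brauer pair of $b_{Q_0}$, to be a $2$-group because any odd-order element would have to move the involutions of $Q_0$. There is no reason for this: an odd-order element of such an automiser is just an automorphism of $R$ induced by an element centralizing $Q_0$, and nothing forces it to be visible on $Q_0\cap R$, to be related to $\mathrm{Aut}(Q)$ at all when $R\not\leq Q$, or even to have order $3$ --- your claim that ``$\mathrm{Aut}$ of the relevant sections is a $\{2,3\}$-group by the same token'' is false for general subgroups of $P_0$ (Lemma \ref{Aut(Q)} constrains only $Q$ and $Q_0$; a subgroup of $P_0$ of rank $3$ already admits automorphisms of order $7$). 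Ruling out odd-order fusion ``away from $Q$'' is exactly the content that cannot be obtained from $\mathrm{Aut}(Q)$ alone; it requires block-theoretic input tied to $Q$ being the hyperfocal subgroup.

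The paper's proof supplies precisely these missing inputs. First it reduces to the case $G=N_G(Q,\,b_Q)$ using Watanabe's isomorphism of Brauer categories \cite[Theorem 2]{W14} together with \cite[Corollary 3.6]{P01} to transport the statement to $C_G(Q_0)$; after this reduction $Q$ is normal and $b=b_Q$. Only then does your $\mathrm{Aut}(Q)$-argument enter, and only to show that $C_G(Q_0)/C_G(Q)$ is a $2$-group (automorphisms of $Q$ fixing $Q_0$ pointwise form a $2$-group), whence $C_G(Q_0)=P_0C_G(Q)$ because $PC_G(Q)/C_G(Q)$ is a Sylow $2$-subgroup and $C_G(Q_0)/C_G(Q)$ is normal. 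The decisive facts are then that $b_Q$ is nilpotent as a block of $C_G(Q)$ (Puig, \cite[Proposition 4.2]{P00} --- this is where the hyperfocal hypothesis kills all odd-order fusion inside $C_G(Q)$) and that a $p$-extension of a nilpotent block is nilpotent (\cite[Proposition 6.5]{KP}), applied to the covering of $b_Q$ by $b_{Q_0}$ in $C_G(Q_0)=P_0C_G(Q)$. Without substitutes for these three ingredients your purely fusion-theoretic case analysis on $R\cap Q$ cannot close.
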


\begin{proof}
Set $L=N_G(Q,\,b_Q)$ and $e=b_Q$.
Regarding $e$ as a block of $L$,
the $b$-Brauer pair $(P,\,b_P)$ is a maximal $e$-Brauer pair.
For any subgroup $T$ of $P$,
denote by $(T, \,e_T)$ the $e$-Brauer pair contained in $(P,\, b_P)$.
By \cite[Theorem 2]{W14}, the correspondence $(T,\, b_T)\mapsto (T, \,e_T)$ induces an isomorphism between the Brauer categories $\mathcal{F}_{(P,\, b_P)}(G,\, b)$ and $\mathcal{F}_{(P,\, b_P)}(L,\, e)$.
Set $H=C_G(Q_0)$, $K=C_L(Q_0)$, $d=b_{Q_0}$ and $f=e_{Q_0}$.
The $b$-Brauer pair $(P_0,\, b_{P_0})$ is a maximal $d$-Brauer pair and
the $e$-Brauer pair $(P_0,\, e_{P_0})$ is a maximal $f$-Brauer pair.
For any subgroup $T$ of $P_0$,
denote by $(T, \,d_T)$ the $d$-Brauer pair contained in $(P_0,\, b_{P_0})$ and
by $(T, \,f_T)$ the $f$-Brauer pair contained in $(P_0,\, e_{P_0})$.
By \cite[Corollary 3.6]{P01}, the correspondence $(T, \,d_T)\mapsto (T,\, f_T)$ induces an isomorphism between the Brauer categories
$\mathcal{F}_{(P_0,\, b_{P_0})}(H,\, d)$ and $\mathcal{F}_{(P_0,\, e_{P_0})}(K,\, f)$.
So in order to prove that the block $d$ is nilpotent,
we may assume without loss of generality that $G$ is equal to $L$.

Since $G=L$, the group $C_G(Q_0)/C_G(Q)$ is isomorphic to a subgroup of $\mathrm{Aut}(Q)$ consisting of automorphisms stabilizing all elements in $Q_0$. By Lemma \ref{Aut(Q)}, the group $C_G(Q_0)/C_G(Q)$ is a $2$-group.
On the other hand, since $P$ is a defect group of the block $b$,
the quotient group $PC_G(Q)/C_G(Q)$ is a Sylow $2$-subgroup of $G/C_G(Q)$.
The quotient group $C_G(Q_0)/C_G(Q)$ is normal in $G/C_G(Q)$.
So we have $C_G(Q_0)/C_G(Q)\subset PC_G(Q)/C_G(Q)$ and $C_G(Q_0)=P_0C_G(Q)$.
Since the block $b_{Q_0}$ of $C_G(Q_0)$ covers the block $b_Q$ of $C_G(Q)$,
by \cite[Proposition 4.2]{P00}
and \cite[Proposition 6.5]{KP}
$b_{Q_0}$ is nilpotent.
\end{proof}

\begin{prop}\label{fusion system}
 {\bf (a)} Assume that the block $b$ is controlled by $N_G(P)$.
 Then $Q_0\leq Z(P)$, the center of $P$.

 \noindent{\bf (b)} Assume that the block $b$ is not controlled by $N_G(P)$.
Then $\mathcal{F}_{(P,\, b_P)}(G,\,b)$ has only one essential object $(S, \,b_S)$.
Moreover, $S=C_P(Q_0)$, $|P:S|=2$ and the block $b$ is controlled by $N_G(S,\,b_S)$.
\end{prop}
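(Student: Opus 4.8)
Throughout write $\mathcal F=\mathcal F_{(P,\,b_P)}(G,\,b)$, recall that $Q=\mathrm{hyp}(\mathcal F)\trianglelefteq P$ is strongly closed in $\mathcal F$, and that $Q_0=\Omega_1(Q)$ is characteristic in $Q$, hence normal in $P$ and stabilised by every $\mathcal F$-automorphism of every subgroup containing it. I will use two elementary facts. \emph{(i)} By Lemma \ref{Aut(Q)}, together with the fact that the kernel of $\mathrm{Aut}(Q)\to\mathrm{Aut}(Q_0)$ is a $2$-group, every non-trivial $2'$-element of $\mathrm{Aut}(Q)$ restricts to an element of order $3$ of $\mathrm{Aut}(Q_0)\cong\mathrm S_3$, hence permutes the three involutions of $Q_0$ transitively and acts fixed-point-freely on $Q_0$; more generally, for a subgroup $T\le Q$, a $2'$-automorphism of $T$ that is trivial on $\Omega_1(T)$ is trivial. \emph{(ii)} If $x$ is a $p'$-element of $N_G(U,b_U)$ then $[U,x]\le Q$ by the definition of $Q$; since $[U,x]=[U,x,x]$ for coprime action, $c_x$ is trivial on $U$ as soon as it is trivial on $U\cap Q$. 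In particular, if a $2'$-element $x$ of $N_G(U,b_U)$ acts on $U$ without centralising it, it acts non-trivially on $U\cap Q$; as $U\cap Q\le Q\cong\mathbb Z_{2^n}\times\mathbb Z_{2^n}$ and cyclic $2$-groups have $2$-group automorphism groups, $U\cap Q$ is non-cyclic, hence contains $\Omega_1(Q)=Q_0$ (the unique Klein four subgroup of $Q$), so $\Omega_1(U\cap Q)=Q_0$ and, by \emph{(i)}, $c_x$ acts on $Q_0$ with order $3$.

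For (a): since $b$ is controlled by $N_G(P)$, $\mathcal F$ is generated by $\mathrm{Aut}_{\mathcal F}(P)$; as $Q\neq1$, $\mathcal F$ is not nilpotent, so $\mathrm{Out}_{\mathcal F}(P)\neq1$, and this group has odd order (being the quotient of $\mathrm{Aut}_{\mathcal F}(P)$ by its Sylow $2$-subgroup $\mathrm{Inn}(P)$), so there is a non-trivial $2'$-element $\varphi\in\mathrm{Aut}_{\mathcal F}(P)$, realised by a $2'$-element of $N_G(P,b_P)$. By \emph{(ii)}, $\varphi$ acts on $Q_0$ as an element of order $3$ of $\mathrm{Aut}(Q_0)$. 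Forming $\widehat P=P\rtimes\langle\varphi\rangle$, in which $Q_0$ is normal, and examining the homomorphism $\widehat P\to\mathrm{Aut}(Q_0)\cong\mathrm S_3$, the image of $P$ is a normal $2$-subgroup of the image of $\widehat P$, which contains an element of order $3$; since neither $\mathbb Z_3$ nor $\mathrm S_3$ has a non-trivial normal $2$-subgroup, $P$ centralises $Q_0$, i.e. $Q_0\le Z(P)$.

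For (b): since $b$ is not controlled by $N_G(P)$, Alperin's fusion theorem for block fusion systems forces $\mathcal F$ to have at least one essential object $(R,b_R)$; I will show every essential object equals $(C_P(Q_0),b_{C_P(Q_0)})$, which gives uniqueness. As $\mathrm{Out}_{\mathcal F}(R)$ has a strongly $2$-embedded subgroup it is not a $2$-group, so $\mathrm{Aut}_{\mathcal F}(R)$ contains a $2'$-element $\psi\notin\mathrm{Inn}(R)$, realised by a $2'$-element of $N_G(R,b_R)$; by \emph{(ii)}, $Q_0\le R$, $\Omega_1(R\cap Q)=Q_0$ and $\psi$ acts on $Q_0$ with order $3$. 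Put $K=\{\alpha\in\mathrm{Aut}_{\mathcal F}(R):\alpha|_{Q_0}=\mathrm{id}\}\trianglelefteq\mathrm{Aut}_{\mathcal F}(R)$. A $2'$-element of $K$ is trivial on $\Omega_1(R\cap Q)=Q_0$, hence trivial on $R\cap Q$ by \emph{(i)}, hence trivial on $R$ by \emph{(ii)}; so $K$ is a $2$-group and, $R$ being $\mathcal F$-radical, $K\le O_2(\mathrm{Aut}_{\mathcal F}(R))=\mathrm{Inn}(R)$. Thus $\mathrm{Aut}_{\mathcal F}(R)/K$ embeds in $\mathrm{Aut}(Q_0)\cong\mathrm S_3$, contains the order-$3$ image of $\psi$, and contains the normal $2$-subgroup $\mathrm{Inn}(R)K/K\cong R/C_R(Q_0)$; since $\mathrm S_3$ and $\mathbb Z_3$ have no non-trivial normal $2$-subgroup, $C_R(Q_0)=R$, i.e. $R\le C_P(Q_0)=:P_0$, and then $\mathrm{Out}_{\mathcal F}(R)=\mathrm{Aut}_{\mathcal F}(R)/K$ is a subgroup of $\mathrm S_3$ of even order containing an element of order $3$, so $\mathrm{Out}_{\mathcal F}(R)\cong\mathrm S_3$ and $K=\mathrm{Inn}(R)$. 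To upgrade $R\le P_0$ to $R=P_0$ I use Lemma \ref{nil}: $b_{Q_0}$ is a nilpotent block of $C_G(Q_0)$ with defect group $P_0$, so its Brauer category is $\mathcal F_{P_0}(P_0)$; identifying it with the centraliser $C_{\mathcal F}(Q_0)$ — legitimate since $Q_0$, having no other $\mathcal F$-conjugate (any $\mathcal F$-conjugate lies in $Q$ and is $\cong Q_0$, hence equals $\Omega_1(Q)=Q_0$), is fully $\mathcal F$-centralised, cf. \cite{P01} — one gets that the $\mathcal F$-automorphisms of $R$ trivial on $Q_0$, namely $K=\mathrm{Inn}(R)$, coincide with $\mathrm{Aut}_{N_{P_0}(R)}(R)=N_{P_0}(R)/Z(R)$, where $C_{P_0}(R)=C_P(R)=Z(R)$ because $R$ is $\mathcal F$-centric. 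Comparing with $\mathrm{Inn}(R)=R/Z(R)$ forces $N_{P_0}(R)=R$, whence $R=P_0$ since $P_0$ is a $2$-group; moreover $R<P$ and $|P:P_0|=|\mathrm{Aut}_P(Q_0)|$ divides $|\mathrm{Aut}(Q_0)|=6$ and is a $2$-power, so $|P:P_0|=2$. Finally, writing $S=P_0=C_P(Q_0)$: by Alperin's fusion theorem $\mathcal F$ is generated by $\mathrm{Aut}_{\mathcal F}(P)$ and $\mathrm{Aut}_{\mathcal F}(S)$; as $S=C_P(Q_0)\trianglelefteq P$ and elements of $N_G(P,b_P)$ normalise $Q$, hence $Q_0$, hence $S$ (and fix $b_S$ by uniqueness of subpairs), we have $N_G(P,b_P)\le N_G(S,b_S)$, so $\mathrm{Aut}_{\mathcal F}(P)$ and $\mathrm{Aut}_{\mathcal F}(S)$ are both induced by conjugation by elements of $N_G(S,b_S)$; therefore so is every composite of restrictions of such maps, i.e. every morphism of $\mathcal F$, and $b$ is controlled by $N_G(S,b_S)$.

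The crux is the identification $R=C_P(Q_0)$. It hinges on confining the odd-order fusion of $\mathcal F$ to $Q_0$ through the hyperfocal constraint $[R,\psi]\le Q$ combined with $\mathcal F$-radicality, and then on using Lemma \ref{nil} in the sharp form $C_{\mathcal F}(Q_0)=\mathcal F_{P_0}(P_0)$. Checking that last identification with correct bookkeeping of Brauer pairs — and the preliminary coprime-action arguments — is where care is needed; the remainder is routine manipulation with the saturation axioms and Alperin's fusion theorem.
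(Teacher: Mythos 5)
Your proof is correct, but it follows a genuinely different route from the paper's. The paper first transports the whole problem to $N_G(Q,\,b_Q)$ via \cite[Theorem 2]{W14}, so that $Q$ and $Q_0$ become normal in $G$ (which makes the invariance of $Q_0$ under all local automorphisms automatic and gives $|P:C_P(Q_0)|\leq 2$ at once); for {\bf (a)} it then argues through the nilpotency of $b_{Q_0}$ and $b_{P_0}$ (Lemma \ref{nil}, \cite[Proposition 6.5]{KP}) and a chain of embeddings of automizer quotients, ending with the contradiction that $(P_0,\,b_{P_0})$ would be essential; for {\bf (b)} it proves the inclusion $C_P(Q_0)\leq S$ (normal $2$-subgroup of an essential automizer is trivial, plus selfcentralizing) and concludes $S=C_P(Q_0)$ from $|P:C_P(Q_0)|\leq 2$. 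You instead stay inside the original $G$ and exploit the hyperfocal constraint $[U,x]\leq Q$ together with coprime action: your part {\bf (a)} is shorter and bypasses Lemma \ref{nil} and \cite{KP} entirely, and in {\bf (b)} you prove the opposite inclusion $R\leq C_P(Q_0)$ (via radicality and the embedding of $\mathrm{Aut}_{\mathcal F}(R)/K$ into $\mathrm{S}_3$) and then upgrade it to equality by reading Lemma \ref{nil} in the sharp form $C_{\mathcal F}(Q_0)=\mathcal F_{P_0}(P_0)$ (via \cite[Corollary 3.6]{P01}), which forces $N_{P_0}(R)=R$; so both arguments still hinge on Lemma \ref{nil} for {\bf (b)}, but through different mechanisms. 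The one external ingredient you use that the paper never states is that $Q=\mathfrak{hyp}(\mathcal F)$ is strongly closed; note that this is genuinely needed in your argument (it is what makes $Q_0$ invariant under \emph{all} of $\mathrm{Aut}_{\mathcal F}(R)$, including its $2$-elements, and hence $K$ normal in $\mathrm{Aut}_{\mathcal F}(R)$ — your coprime-action observation only handles $2'$-elements), so you should cite it explicitly (it is standard, e.g.\ \cite{AKO}, via $O^2(\mathcal F)$ being a normal subsystem over $\mathfrak{hyp}(\mathcal F)$), whereas the paper obtains the same invariance for free from its reduction to $N_G(Q,\,b_Q)$. What your approach buys is a more self-contained, purely fusion-theoretic proof of {\bf (a)} and a structural explanation of why $S=C_P(Q_0)$; what the paper's reduction buys is that it never needs strong closure or the identification of Puig's block hyperfocal subgroup with the fusion-system hyperfocal subgroup.
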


\begin{proof}
In order to prove the proposition, by \cite[Theorem 2]{W14} we may assume that $G$ is equal to $N_G(Q,\, b_Q)$.
Then $b_Q=b$ and $Q_0$ is normal in $G$.
The quotient group $P/P_0$ is isomorphic to a $2$-subgroup of ${\rm S}_3$.
By Lemma \ref{Aut(Q)}, the index of $P_0$ in $P$ is equal to $1$ or $2$.

We firstly prove the statement {\bf (a)}.
Since the pair $(P_0,\, b_{P_0})$ is a maximal $b_{Q_0}$-Brauer pair,
the block $b_{P_0}$ of $C_G(P_0)$ is nilpotent.
Note that $b_{P_0}$ is also a block of $PC_G(P_0)$ and
that $(P,\, b_P)$ is a maximal $b_{P_0}$-Brauer pair.
By \cite[Proposition 6.5]{KP}, the block $b_{P_0}$ of $PC_G(P_0)$ is nilpotent.
So we have
$N_G(P,\,b_P)\cap PC_G(P_0)=N_{PC_G(P_0)}(P,\,b_P)=PC_G(P)$
and then the inclusion $N_G(P,\,b_P)\subset N_G(P_0,\,b_{P_0})$ induces an injective group homomorphism
$N_G(P,\,b_P)/PC_G(P)\rightarrow N_G(P_0,\,b_{P_0})/PC_G(P_0)$.
The group $N_G(P,\,b_P)/PC_G(P)$ is a nontrivial group.
Otherwise, $N_G(P,\,b_P)=PC_G(P)$.
Since we are assuming in the statement {\bf (a)} that
the block $b$ is controlled by $N_G(P)$, the block $b$ is nilpotent.
That contradicts with the hyperfocal subgroup $Q$ of the block $b$ being nontrivial. Therefore the quotient group $N_G(P_0,\,b_{P_0})/PC_G(P_0)$ is nontrivial; moreover it has odd order since it is well known that the quotient group
$N_G(P,\,b_P)/PC_G(P)$ has order coprime to $2$.

By Lemma \ref{Aut(Q)}, the block $b_{Q_0}$ of $C_G(Q_0)$ is nilpotent.
So $N_G(P_0,\,b_{P_0})\cap C_G(Q_0)
    =N_{C_G(Q_0)}(P_0,\,b_{P_0})
    =P_0C_G(P_0)$ and then the inclusion $N_G(P_0,\,b_{P_0})\subset G$
induces an injective homomorphism
$N_G(P_0,\,b_{P_0})/P_0C_G(P_0)\rightarrow G/C_G(Q_0)$.
By Lemma \ref{Aut(Q)} again, the quotient group $G/C_G(Q_0)$ is isomorphic to a subgroup of ${\rm S}_3$, Sylow $3$-subgroups of
$N_G(P_0,\,b_{P_0})/P_0C_G(P_0)$ are nontrival.

Now we assume that $P\neq P_0$.
Then Sylow $2$-subgroups of $N_G(P_0,\,b_{P_0})/P_0C_G(P_0)$ are nontrivial. Therefore $N_G(P_0,\,b_{P_0})/P_0C_G(P_0)$ has to be isomorphic to ${\rm S}_3$. This shows that $(P_0,\,b_{P_0})$ is essential in $\mathcal{F}_{(P,\,b_P)}(G,\,b)$.
But since we are assuming in the statement {\bf (a)} that
the block $b$ is controlled by $N_G(P)$,
$\mathcal{F}_{(P,\,b_P)}(G,\,b)$ has no essential object!
That produces a contradiction.
So we have $P=P_0$ and the statement {\bf (a)} is proved.

Now we prove the statement {\bf (b)}.
In this case, $\mathcal{F}_{(P,\,b_P)}(G,\,b)$ has essential objects.
Let $(S,\, b_S)$ be an essential object in $\mathcal{F}_{(P,\,b_P)}(G,\,b)$.
Then $S$ contains $Q$ since $Q$ is normal in $G$.
So the pair $(S,\, b_S)$ is also a Brauer pair of the block $b_{Q_0}$ of $PC_G(Q_0)$.
By Lemma \ref{nil}, the block $b_{Q_0}$ of $C_G(Q_0)$ is nilpotent,
and by \cite[Proposition 6.5]{KP}
the block $b_{Q_0}$ of $PC_G(Q_0)$ is nilpotent too.
So we have $N_{PC_G(Q_0)}(S,\,b_S)=PC_G(S)$,
and then $N_G(S,\,b_S)\cap C_G(Q_0)=N_G(S,\,b_S)\cap PC_G(Q_0)\cap C_G(Q_0)=PC_G(S)\cap C_G(Q_0)=P_0C_G(S)$.
Since $Q_0$ is normal in $G$, $P_0C_G(S)$ is normal in $N_G(S,\,b_S)$ and
thus the image of $P_0C_G(S)$ in $N_G(S,\,b_S)/SC_G(S)$ is a
normal $2$-subgroup.
Since $(S,\, b_S)$ is essential in $\mathcal{F}_{(P,\,b_P)}(G,\,b)$,
we have $P_0SC_G(S)=SC_G(S)$.
It is clear that
$(S,\,b_S)$ is selfcentralizing (see \cite[\S41]{T95}).
Then $SP_0=S$.
Since $S\neq P$ and $|P:P_0|=2$, $S$ has to be $P_0$.
In particular, $\mathcal{F}_{(P,\,b_P)}(G,\,b)$ has the unique essential object
$(S,\,b_S)$.
Since $N_G(P,\, b_P)\subset N_G(S,\, b_S)$, by Alperin's fusion theorem
we have
$\mathcal{F}_{(P,\,b_P)}(G,\,b)=\mathcal{F}_{(P,\,b_P)}\big(N_G(S,\,b_S),\,e\big)$,
where $e$ is the block $b_S$ of $N_G(S,\,b_S)$.
\end{proof}

The canonical homomorphism
$N_G(P,\,b_P)/C_G(P)\rightarrow N_G(P,\,b_P)/P C_G(P)$ has a section homomorphism. In particular, $N_G(P,\,b_P)/C_G(P)$ has a $2$-complement $E/C_G(P)$.
When the block $b$ is controlled by $N_G(P)$,
by the proof of the statement {\bf (a)} above,
$N_G(P,\,b_P)/PC_G(P)$ has order $3$ and so does $E/C_G(P)$.

We continue to use the notation in Proposition \ref{fusion system} ({\bf b}) and
assume that the block $b$ is not controlled by $N_G(P)$.
We claim that $N_G(S,\,b_S)/SC_G(S)$ is isomorphic to ${\rm S}_3$.
Indeed, by \cite[Theorem 2]{W14} we may assume that $G$ is equal to $N_G(Q,\, b_Q)$. In the last paragraph of the proof of Proposition \ref{fusion system} ({\bf b}),
we already prove that the intersection
$N_G(S,\,b_S)\cap C_G(Q_0)$ is equal to $SC_G(S)$.
So the inclusion $N_G(S,\,b_S)\subset G$ induces an injective group homomorphism $N_G(S,\,b_S)/SC_G(S)\rightarrow G/C_G(Q_0)$.
Since $(S,\,b_S)$ is essential and $G/C_G(Q_0)$ is isomorphic to
a subgroup of ${\rm S}_3$,
the injective group homomorphism has to be an isomorphism.
The claim is done.

So $N_G(S,\,b_S)$ has a normal subgroup $A$ containing $SC_G(S)$
such that $A/SC_G(S)$ has order three.
Let $E_S/C_G(S)$ be a $2$-complement of $A/C_G(S)$.

\begin{lem}\label{P=QC_P(E)}
Keep the notation as above.

\noindent{\bf (i)} If
the block $b$ is controlled by $N_G(P)$,
then $E\cap C_G(Q)=C_G(P)$,
$P=Q\rtimes C_P(E)$ and $Q=[Q,\,E]$.

\noindent{\bf (ii)} If
the block $b$ is not controlled by $N_G(P)$,
then $E_S\cap C_G(Q)=C_G(P)$,
$S=Q\rtimes C_S(E_S)$ and $Q=[Q,\,E_S]$.
\end{lem}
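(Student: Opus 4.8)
The two parts of Lemma \ref{P=QC_P(E)} are parallel, so I would write the proof for part (i) in detail and then indicate how the same argument, applied to the essential pair $(S,\,b_S)$ and the group $A$ in place of $(P,\,b_P)$ and $N_G(P,\,b_P)$, yields part (ii). The key observations are: (1) $Q$ is the hyperfocal subgroup of the block, and by a result of Puig (\cite{P00}) the hyperfocal subgroup is generated by the commutators $[U,x]$ over $b$-Brauer pairs $(U,b_U)\subseteq(P,b_P)$ and $p'$-elements $x$ of $N_G(U,b_U)$; (2) when the block is controlled by $N_G(P)$, by Alperin's fusion theorem every such morphism is already realized by $N_G(P,b_P)$, so $Q=[P,E]$ where $E/C_G(P)$ is the order-$3$ $2$-complement of $N_G(P,b_P)/PC_G(P)$ identified in the paragraph preceding the lemma; and (3) $E$ acts on $P$ with $C_G(P)$ in its kernel, so the action of $E/C_G(P)\cong\mathbb Z_3$ on $P$ factors through $\mathrm{Aut}(P)$.

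First I would establish $E\cap C_G(Q)=C_G(P)$. The inclusion $\supseteq$ is clear since $C_G(P)\subseteq C_G(Q)$ and $C_G(P)\subseteq E$. For the reverse inclusion, note $E\cap C_G(Q)$ is a normal subgroup of $E$ containing $C_G(P)$, and the quotient $(E\cap C_G(Q))/C_G(P)$ injects into $E/C_G(P)\cong\mathbb Z_3$, so it is either trivial or all of $E/C_G(P)$; if it were everything, then $E$ would centralize $Q$, forcing $[P,E]\leq[P,C_G(Q)]$ — but one checks (using that $PC_G(Q)/C_G(Q)$ is a $2$-group and $E/C_G(P)$ has odd order) that this makes $[P,E]$ too small, contradicting $Q=[P,E]$ being nontrivial. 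Concretely, the cleanest route is: the image of $E$ in $\mathrm{Aut}(P)$ has order $3$ and acts nontrivially on $P$ since the block is non-nilpotent, so it cannot centralize $Q=[P,E]$; hence $E\cap C_G(Q)=C_G(P)$.

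Next, for $Q=[Q,E]$ and $P=Q\rtimes C_P(E)$: since $E/C_G(P)$ has order coprime to the order of the normal subgroup structure involved, coprime action applies. Let $\bar E=E/C_G(P)\cong\mathbb Z_3$ act on $P$. By coprime action on the abelian-by-nilpotent situation — more precisely, applying the standard fact that for coprime action $P=[P,\bar E]\cdot C_P(\bar E)$ and $[P,\bar E]=[[P,\bar E],\bar E]$ — we get $P=[P,E]C_P(E)=Q\,C_P(E)$. Since $\bar E$ acts trivially on $P/Q$ (because $P/Q$ is, by the hyperfocal subgroup theorem, the largest quotient on which the fusion is controlled — here one invokes that $N_G(P,b_P)$ already controls fusion, so $E$ acts trivially on $P/Q$), and $Q$ is abelian, $[Q,E]=[P,E]=Q$; and $C_P(E)\cap Q=C_Q(E)=1$ because $\bar E$ acts fixed-point-freely on $Q$ — this last point I would justify via Lemma \ref{Aut(Q)}, since an order-$3$ automorphism of $Q=\mathbb Z_{2^n}\times\mathbb Z_{2^n}$ permutes the three involutions of $Q_0$ transitively and hence has no nontrivial fixed points. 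Therefore $P=Q\rtimes C_P(E)$.

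**Main obstacle.** The delicate point is the bookkeeping that identifies $[P,E]$ with the hyperfocal subgroup $Q$ exactly, rather than merely containing or being contained in it. In one direction, $[P,E]\leq Q$ follows from the definition of $Q$ as generated by all commutators $[U,x]$ (taking $U=P$); in the other, $Q\leq[P,E]$ requires knowing that \emph{every} generator $[U,x]$ of $Q$, for $U<P$ and $x$ a $p'$-element of $N_G(U,b_U)$, can be rewritten in terms of commutators with elements of $E$ — this is precisely where control of fusion by $N_G(P,b_P)$ (part (i)) or by $A$, equivalently $N_G(S,b_S)$ (part (ii)), enters via Alperin's fusion theorem, together with the fact that the $2$-complement $E/C_G(P)$ (resp. $E_S/C_G(S)$) can be chosen inside the relevant normalizer by the section/splitting remark preceding the lemma. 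For part (ii) the only extra care needed is that one works inside $A\triangleleft N_G(S,\,b_S)$ with $A/SC_G(S)\cong\mathbb Z_3$ and that $Q\leq S$ (established in the proof of Proposition \ref{fusion system}(b)), so that the coprime-action argument runs over $S$ rather than $P$; the fixed-point-freeness of the order-$3$ action on $Q$ is again supplied by Lemma \ref{Aut(Q)}. I would therefore structure the write-up so that the hyperfocal-subgroup identification is isolated as the first step, and the coprime-action splitting is a short formal consequence.
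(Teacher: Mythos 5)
Your proposal is correct in substance but reaches the lemma by a genuinely different route. The paper obtains $E\cap C_G(Q)=C_G(P)$ from block theory: $b_Q$ is nilpotent as a block of $C_G(Q)$ (\cite[Proposition 4.2]{P00}) and hence as a block of $PC_G(Q)$ (\cite[Proposition 6.5]{KP}), which forces $N_G(P,\,b_P)\cap PC_G(Q)=PC_G(P)$ and therefore $E\cap C_G(Q)=C_G(P)$; then $E/C_G(P)$, of order $3$, embeds into $N_G(Q,\,b_Q)/C_G(Q)$, acts freely on $Q-\{1\}$ by Lemma \ref{Aut(Q)}, so $C_Q(E)=1$ and $Q=[Q,\,E]$ by coprime action on the abelian group $Q$; finally $[P,\,E]=Q$ makes the induced action on $P/Q$ trivial, and a Glauberman lemma produces a fixed point in each coset, giving $P=QC_P(E)=Q\rtimes C_P(E)$. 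You replace the nilpotency input by the coprime commutator identity $[P,\,E,\,E]=[P,\,E]\neq 1$ to get $E\cap C_G(Q)=C_G(P)$, and you replace Glauberman's lemma by the equivalent coprime splitting $P=[P,\,E]\,C_P(E)$; both substitutions are legitimate and somewhat more elementary, and part (ii) is in either treatment the same argument with $S$, $E_S$, $C_G(S)$ in place of $P$, $E$, $C_G(P)$.

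The one place to be careful is the step you single out as the main obstacle. The inclusion $Q\leq[P,\,E]$ is true but is not needed for any of the three conclusions: the easy inclusion $[P,\,E]\leq Q$ (from the definition of the hyperfocal subgroup, after replacing $e\in E$ by a suitable odd power, which is a $2'$-element of $N_G(P,\,b_P)$), together with $E\cap C_G(Q)=C_G(P)$, Lemma \ref{Aut(Q)} and coprime action, already yields $Q=[Q,\,E]\times C_Q(E)=[Q,\,E]$ and $P=[P,\,E]\,C_P(E)\subseteq QC_P(E)$. Moreover, your sketch of $Q\leq[P,\,E]$ is too optimistic as stated: control of fusion only supplies some $z\in N_G(P,\,b_P)=PE$ inducing the given automorphism of $U$, and writing $z=we$ with $w\in P$, $e\in E$ one only gets that $[U,\,z]$ lies in the subgroup generated by $[P,\,E]$ and $[P,\,P]$, not in $[P,\,E]$; to land in $[P,\,E]$ one must first pass to a $2'$-element of $N_G(P,\,b_P)$ inducing the same action on $U$, conjugate it into $E$ modulo $C_G(P)$ by an element of $P$, and use that $[P,\,E]$ is normalized by $P$ (or argue in the model group $P\rtimes E/C_G(P)$). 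Since the lemma does not need this identification, the cleanest fix is to drop it, and likewise to justify the triviality of the $E$-action on $P/Q$ simply by $[P,\,E]\leq Q$ rather than by the remark about the largest controlled quotient.
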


\begin{proof}
We firstly prove the statement {\bf (i)}.
The block $b_Q$ of $C_G(Q)$ is nilpotent (see \cite[Proposition 4.2]{P00}),
and $b_Q$ as a block of $P C_G(Q)$ is nilpotent too (see \cite[Proposition 6.5]{KP}). Since $(P,\, b_P)$ is a maximal Brauer pair of the block $b_Q$ of $PC_G(Q)$,
we have $N_G(P,\,b_P)\cap PC_G(Q)=P C_G(P)$.
So
\begin{center}
$N_G(P,\,b_P)\cap C_G(Q)=C_P(Q)C_G(P)$ and
$E\cap C_G(Q)=E\cap C_P(Q)C_G(P)=C_G(P)$
\end{center}
and the inclusion $E\subset N_G(Q,\, b_Q)$ induces an injective group homomorphism $E/C_G(P)\rightarrow N_G(Q,\, b_Q)/C_G(Q)$.
We identify $E/C_G(P)$ as the image of the homomorphism.
Since the block $b$ is controlled by $N_G(P)$, $E/C_G(P)$ has order $3$.
By Lemma \ref{Aut(Q)}, $E/C_G(P)$ acts freely on $Q-\{1\}$.
Thus we have $C_Q(E)=1$ and $Q=[Q, \,E]$.
Since $[P,\, E]= Q$, the $E$-conjugation induces a trivial action of
$E/C_G(P)$ on the quotient group $P/Q$.
By a Glauberman Lemma,
every coset of $Q$ in $P$ has an element fixed by $E/C_G(P)$.
This implies that $P=QC_P(E)$.
So we have $P=Q\rtimes C_P(E)$.

The proof of the statement ({\bf ii}) is similar to that of the statement ({\bf i}).
\end{proof}

Let $R$ be a $p$-subgroup of $P$ and $K$ a subgroup of ${\rm Aut}(R)$.
The $N_G(R,\,b_R)$-conjugation induces a group homomorphism
$N_G(R,\,b_R)\rightarrow {\rm Aut}(R)$.
We denote by $N_G^K(R)$ the inverse image of $K$ in $N_G(R,\,b_R)$,
and set $N_P^K(R)=P\cap N_G^K(R)$.
Clearly, $b_R$ is a block of $N_G^K(R)$.
The $p$-subgroup $R$ is said to be fully $K$-normalized in
$\mathcal{F}_{(P,\,b_P)}(G,\,b)$
if $N_P^K(R)$ is a defect group of the block $b_R$ of $N_G^K(R)$
(see \cite[2.6 and Proposition 3.5]{P01}).

\begin{lem}\label{hyperfocal subgroup of local subgroup}
Keep the notation as above and assume that $R$ is fully $K$-normalized in
$\mathcal{F}_{(P,\,b_P)}(G,\,b)$.
Then $Q$ contains the hyperfocal subgroup of the block $b_R$ of $N_G^K(R)$ with respect to a suitable maximal Brauer pair $(N_P^K(R),\, g^K)$.
\end{lem}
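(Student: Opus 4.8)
The plan is to choose the maximal $b_R$-Brauer pair $(N_P^K(R),\,g^K)$ of $N_G^K(R)$ compatibly with the fixed pair $(P,\,b_P)$, and then to observe that every commutator generating the hyperfocal subgroup of $b_R$ is already one of the commutators defining $Q$.

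First I would fix the maximal pair. Since $R$ is normal in $N_G^K(R)$ and $C_G(R)\subseteq N_G^K(R)$, one has $C_{N_G^K(R)}(V)=C_G(V)$ whenever $R\leq V\leq N_P^K(R)$; hence the containment relation among the $b$-Brauer pairs of $G$ lying between $(R,\,b_R)$ and $(N_P^K(R),\,b_{N_P^K(R)})$ --- all of whose centralizers sit inside $C_G(R)$, so that the relevant Brauer homomorphisms are the same in $G$ and in $N_G^K(R)$ --- restricts to the containment relation among the corresponding $b_R$-Brauer pairs of $N_G^K(R)$. In particular $(R,\,b_R)\leq(N_P^K(R),\,b_{N_P^K(R)})$ inside $N_G^K(R)$, so $(N_P^K(R),\,b_{N_P^K(R)})$ is a $b_R$-Brauer pair of $N_G^K(R)$; as $R$ is fully $K$-normalized, $N_P^K(R)$ is a defect group of the block $b_R$ of $N_G^K(R)$, and therefore $g^K:=b_{N_P^K(R)}$ gives a maximal $b_R$-Brauer pair $(N_P^K(R),\,g^K)$. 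The same restriction principle shows that for every $V$ with $R\leq V\leq N_P^K(R)$ the $b_R$-Brauer pair of $N_G^K(R)$ inside $(N_P^K(R),\,g^K)$ on $V$ is $(V,\,b_V)$, and that $N_{N_G^K(R)}(V,\,b_V)=N_G^K(R)\cap N_G(V,\,b_V)\subseteq N_G(V,\,b_V)$.

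Next I would use Puig's results on the hyperfocal subgroup and on localizing categories (see \cite{P00} and \cite[2.6 and Proposition 3.5]{P01}): the hyperfocal subgroup $Q_R$ of the block $b_R$ of $N_G^K(R)$ with respect to $(N_P^K(R),\,g^K)$ is an invariant of the Brauer category $\mathcal F_{(N_P^K(R),\,g^K)}\bigl(N_G^K(R),\,b_R\bigr)$, and so it may be computed using only the commutators $[V,\,y]$ in which $V$ ranges over the essential subgroups of that Brauer category together with $N_P^K(R)$, and $y$ over the $p'$-elements of $N_{N_G^K(R)}(V,\,b_V)$. Each such $V$ contains the largest normal subgroup of that Brauer category, hence contains $R$ because $R\trianglelefteq N_G^K(R)$. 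Thus $R\leq V\leq N_P^K(R)\leq P$, the second component of the pair on $V$ is $b_V$ with $(V,\,b_V)\subseteq(P,\,b_P)$, and $y$ is a $p'$-element of $N_G(V,\,b_V)$; by the very definition of $Q$ we get $[V,\,y]\subseteq Q$. Since $Q_R$ is generated by such commutators, $Q_R\subseteq Q$, which is the desired conclusion.

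The step I expect to be the main obstacle is the first one: that the maximal $b_R$-Brauer pair of $N_G^K(R)$ can be taken inside $(P,\,b_P)$ in a way compatible with the containment order, so that the subgroups and blocks that occur are exactly those inherited from $(P,\,b_P)$. This is precisely where the hypothesis that $R$ is fully $K$-normalized in $\mathcal F_{(P,\,b_P)}(G,\,b)$ is used, and it rests on Puig's theory of localizing categories (\cite[Corollary 3.6]{P01} and its analogue for $N_G^K(R)$), which identifies $\mathcal F_{(N_P^K(R),\,g^K)}\bigl(N_G^K(R),\,b_R\bigr)$ with the $K$-normalizer of $\mathcal F_{(P,\,b_P)}(G,\,b)$ at $R$ --- a subcategory all of whose morphisms are morphisms of $\mathcal F_{(P,\,b_P)}(G,\,b)$. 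Granting this, the remainder is routine bookkeeping with Brauer pairs, together with the observation that $O^{p'}(H)$ is generated by the $p$-elements of $H$, so that $O^{p'}$ of a subgroup of $\mathrm{Aut}(V)$ is contained in $O^{p'}$ of the whole group.
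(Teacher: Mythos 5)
Your overall strategy---choose the maximal Brauer pair of the local block compatibly inside $(P,\,b_P)$ and then recognize the generating commutators of its hyperfocal subgroup among the commutators defining $Q$---matches the paper's, but the two steps where the real work happens are done differently, and both have gaps. The crucial one is your reduction of the hyperfocal generators to the essential subgroups of $\mathcal F_{(N_P^K(R),\,g^K)}\bigl(N_G^K(R),\,b_R\bigr)$ together with $N_P^K(R)$. This does not follow from the hyperfocal subgroup being ``an invariant of the Brauer category'': by Puig's definition (and \cite[1.3]{P00}, which the paper quotes) it is generated by $[T,\,z]$ with $T$ ranging over \emph{all} subgroups of $N_P^K(R)$ and $z$ over $p'$-elements of $N_{N_G^K(R)}(T,\,d_T)$, and the telescoping argument via Alperin's fusion theorem only expresses such a $z$ through automorphisms of essential subgroups whose individual factors need not be $p'$-elements---that argument yields the focal, not the hyperfocal, subgroup. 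The reduction you assert is a genuine theorem that would have to be proved or cited precisely; without it, the subgroups $T$ not containing $R$ are exactly the problematic ones, since for them $C_{N_G^K(R)}(T)$ may be smaller than $C_G(T)$, the block $d_T$ need not be $b_T$, and a $p'$-element of $N_{N_G^K(R)}(T,\,d_T)$ is not visibly among the elements defining $Q$. The paper avoids this entirely: by \cite[Corollary 3.6]{P01} the Brauer category of $(N_G^K(R),\,b_R)$ is the $K$-normalizer $N_{\mathcal F}^K(R)$, whose defining property gives, for every $T\le N_P^K(R)$ and every $p'$-element $z$ of $N_{N_G^K(R)}(T,\,d_T)$, a $p'$-element $y\in N_G(TR,\,b_{TR})$ acting on $T$ as $z$ does; then $[T,\,z]=[T,\,y]\subseteq[TR,\,y]\subseteq Q$, with no reduction to essential subgroups needed.

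Second, your construction of the maximal pair takes $g^K=b_{N_P^K(R)}$ and rests on $C_{N_G^K(R)}(V)=C_G(V)$ for $R\le V\le N_P^K(R)$, which tacitly assumes $R\le N_P^K(R)$, i.e.\ $\mathrm{Inn}(R)\subseteq K$ (the same assumption underlies your appeal to ``$R\trianglelefteq N_G^K(R)$'' to place $R$ inside every essential subgroup). The lemma allows an arbitrary $K\le\mathrm{Aut}(R)$; for instance $K=1$ with $R$ nonabelian gives $N_P^K(R)=C_P(R)\not\supseteq R$. This is why the paper instead takes the maximal pair $\bigl(N_P^K(R),\,b_N^K\bigr)$, where $b_N^K$ is the block of $C_{N_G^K(R)}\bigl(N_P^K(R)\bigr)$ determined by $b_N$ with $N=RN_P^K(R)$. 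In the paper's actual applications your extra hypothesis happens to hold ($K$ trivial with $R$ abelian, or $K$ the full group of automorphisms induced by $N_G(R,\,b_R)$), but a proof of the lemma as stated must cover the general case.
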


\begin{proof}
Set $N=RN_P^K(R)$.
Clearly, $C_G(N)$ is normal in $C_{N_G^K(R)}\big(N_P^K(R)\big)$,
which is contained in $N_G(N)$,
the block $b_N$ of $C_G(N)$ determines the unique block $b_N^K$ of
$C_{N_G^K(R)}\big(N_P^K(R)\big)$ such that $b_Nb_N^K\neq 0$,
and the pair $\big(N_P^K(R),\, b_N^K\big)$ is a Brauer pair of
the block $b_R$ of $N_G^K(R)$.
Since $R$ is fully $K$-normalized in $\mathcal{F}_{(P,\,b_P)}(G,\,b)$,
the pair $\big(N_P^K(R),\, b_N^K\big)$ is a maximal Brauer pair of
the block $b_R$ of $N_G^K(R)$.
Set ${\cal F}=\mathcal{F}_{(P,\,b_P)}(G,\,b)$.
We denote by $N_{\cal F}^K(R)$ the $K$-normalizer of $R$ in $\cal F$
(see \cite[2.14]{P01}).
By \cite[Corollary 3.6]{P01}, $N_{\cal F}^K(R)$ is exactly the Brauer category
${\cal F}_{\big(N_P^K(R),\, b_N^K\big)}\big(N_G^K(R),\, b_R\big)$.

Set $d=b_R$. We regard $d$ as a block of $N_G^K(R)$.
For any subgroup $T$ of $N_P^K(R)$,
denote by $d_T$ the block of $C_{N_G^K(R)}\big(N_P^K(R)\big)$ such that
$(T,\, d_T)$ is the Brauer pair contained in $\big(N_P^K(R),\, b_N^K\big)$.
By \cite[1.3]{P00}, the hyperfocal $Q'$ of the block $b_R$ of $N_G^K(R)$ with respect to the maximal Brauer pair $\big(N_P^K(R),\, b_N^K\big)$
is generated by all these commutators $[x,\, T]$,
where $T$ runs over subgroups of $N_P^K(R)$ and
$x$ runs over the set of $p'$-elements of $N_{N_G^K(R)}(T,\, d_T)$.
Given a $p'$-element $z$ of $N_{N_G^K(R)}(T,\, d_T)$,
by the definition of $N_{\cal F}^K(R)$,
there is $y\in N_G(TR, \, b_{TR})$ such that
$z$ and $y$ induce the same automorphism on $T$ by conjugation;
moreover, we may adjust the choice of $y$ so that $y$ is a $p'$-element.
Then we have $[z,\, T]=[y,\, T]\subset [y,\, RT]\subset Q$, and $Q'\subset Q$.
\end{proof}

\begin{lem}\label{inherition}
Keep the notation in the paragraph above
Lemma \ref{hyperfocal subgroup of local subgroup},
and assume that the block $b$ is controlled by $N_G(P)$.
Then, the Brauer category of the block $b_R$ of $N_G^K(R)$ is controlled by the normalizer of $N_P^K(R)$ in $N_G^K(R)$.
\end{lem}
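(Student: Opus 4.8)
The plan is to reduce, as usual, to the situation where $G=N_G(Q,\,b_Q)$ by invoking \cite[Theorem 2]{W14}, which identifies the Brauer categories and hence preserves both the hypothesis that $b$ is controlled by $N_G(P)$ and the conclusion we want (the control statement for $b_R$ passes through the category isomorphism of Proposition-type results as in \cite[Corollary 3.6]{P01}). Once $G=N_G(Q,\,b_Q)$, the subgroup $Q_0=\mathbb{Z}_2\times\mathbb{Z}_2$ is normal in $G$, and by Proposition \ref{fusion system}(a) we have $Q_0\leq Z(P)$, so in particular $Q_0$ centralizes $N_P^K(R)$ and is normal in $N_G^K(R)$. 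The key point is that in this reduced situation the Brauer category of $b$ has no essential object, and we want to transfer that "no essential object" property down to the local subgroup $N_G^K(R)$ with its maximal Brauer pair $\big(N_P^K(R),\,b_N^K\big)$.

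First I would set up the notation exactly as in Lemma \ref{hyperfocal subgroup of local subgroup}: write $N=RN_P^K(R)$, let $d=b_R$ regarded as a block of $N_G^K(R)$, and recall that by \cite[Corollary 3.6]{P01} the $K$-normalizer category $N_{\cal F}^K(R)$ coincides with ${\cal F}_{\big(N_P^K(R),\,b_N^K\big)}\big(N_G^K(R),\,b_R\big)$. The strategy is then to argue that every essential object of this latter category would force an essential-type configuration already visible in ${\cal F}={\cal F}_{(P,\,b_P)}(G,\,b)$: given an essential object $\big(T,\,d_T\big)$ of the local category, with $T\leq N_P^K(R)$ self-centralizing and $\mathrm{Out}$-group having a strongly $2$-embedded subgroup, I would pull $T$ (or rather $TR$) back into $G$ using the definition of $N_{\cal F}^K(R)$, exactly as in the last step of the proof of Lemma \ref{hyperfocal subgroup of local subgroup}, where each $p'$-element $z\in N_{N_G^K(R)}(T,\,d_T)$ was realized by a $p'$-element $y\in N_G(TR,\,b_{TR})$ inducing the same action on $T$. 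The aim is to conclude that the automizer of $T$ inside the global category $\cal F$ is at least as large, hence also contains a strongly $2$-embedded subgroup, making $T$ essential (after replacing it by a fully normalized $\cal F$-conjugate) in $\cal F$ — contradicting that $b$ is controlled by $N_G(P)$, which means $\cal F$ has no essential object.

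The main obstacle I anticipate is the bookkeeping needed to show that $T$ being essential in the local category genuinely implies $T$ (up to $\cal F$-isomorphism) is essential in $\cal F$: the outer automorphism group $N_G^K(R)(T)$ taken inside $N_G^K(R)$ is a subquotient of the global $N_G(T)(T)$, but it is cut out by the condition of stabilizing (the Brauer pair over) $R$ and of inducing an element of $K$ on $R$; I need to check that a strongly $2$-embedded subgroup of the smaller group still witnesses a strongly $2$-embedded subgroup, or more precisely a proper "essential" configuration, in the larger group, and that self-centralization is preserved (which follows because $C_P(TR)\leq C_P(T)$ and one checks $TR$ and $T$ have the same centralizer-block data using that $b_{Q_0}$, and hence the relevant nilpotent blocks, behave well — compare the use of \cite[Proposition 6.5]{KP} earlier). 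A clean way around the delicate part is to invoke the characterization of control of fusion via Alperin's fusion theorem together with the fact, from Proposition \ref{fusion system}(a) and Lemma \ref{P=QC_P(E)}(i), that $P=Q\rtimes C_P(E)$ with $E/C_G(P)$ of order $3$ acting freely on $Q-\{1\}$; this rigid structure should let me enumerate the possibilities for $N_P^K(R)$ and its automizer directly and rule out any strongly $2$-embedded subgroup by hand, at which point the controlled-by-normalizer conclusion for $b_R$ is immediate.

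Alternatively — and this is probably the shortest route — one can observe that Lemma \ref{hyperfocal subgroup of local subgroup} already gives that the hyperfocal subgroup $Q'$ of $b_R$ is contained in $Q$, hence is abelian of the form dividing $\mathbb{Z}_{2^n}\times\mathbb{Z}_{2^n}$; combined with the freeness of the order-$3$ action and the fact that in the reduced setting $N_G^K(R)$ is itself a subgroup of (a group with) $\cal F$ having no essential object, an inductive application of Proposition \ref{fusion system}(a) to the pair $\big(b_R, N_G^K(R)\big)$ — whose defect group $N_P^K(R)$ again has hyperfocal subgroup central in it (since $Q'\leq Q\leq Z(P)\cap N_P^K(R)$, noting $Q\leq N_P^K(R)$ because $Q\trianglelefteq G$ acts trivially on $R$) — yields that $b_R$ is controlled by $N_{N_G^K(R)}\big(N_P^K(R)\big)$, which is exactly the claim. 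I expect the hard part to be verifying the hypotheses of this inductive step cleanly, in particular that $Q\leq N_P^K(R)$ and that $Q'$ really is central in $N_P^K(R)$, but both reduce to the normality of $Q$ (and $Q_0$) in the reduced group $G$ and to Lemma \ref{hyperfocal subgroup of local subgroup}.
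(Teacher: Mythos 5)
Your proposal has genuine gaps in both of its routes. The ``shortest route'' is the most serious problem: you invoke $Q'\leq Q\leq Z(P)\cap N_P^K(R)$, but Lemma \ref{inherition} only assumes that $b$ is controlled by $N_G(P)$ --- it does \emph{not} assume $Q\leq Z(P)$, and the Remark after Theorem \ref{MT} gives an explicit example (the principal block of $H\wr(Q\rtimes\mathbb{Z}_6)$) where $b$ is controlled by $N_G(P)$ yet $Q\not\leq Z(P)$. Proposition \ref{fusion system}{\bf (a)} only yields $Q_0\leq Z(P)$, which is far weaker. Your parenthetical justification ``$Q\leq N_P^K(R)$ because $Q\trianglelefteq G$ acts trivially on $R$'' is also a non sequitur: normality of $Q$ in the reduced group does not make $Q$ centralize $R$, nor even normalize it, and if $K$ is small $N_P^K(R)$ can be as small as $C_P(R)$. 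Finally, applying Proposition \ref{fusion system} to the local block $b_R$ presupposes that its hyperfocal subgroup is nontrivial homocyclic of rank two, which you have not established (Lemma \ref{hyperfocal subgroup of local subgroup} only gives $Q'\leq Q$). In the primary route, the step you yourself flag --- passing from an essential object of the local category to an essential object of $\mathcal{F}_{(P,\,b_P)}(G,\,b)$ --- is exactly where the argument fails: from the realization of local morphisms inside the global category you only learn that a \emph{subgroup} of the global outer automorphism group of $T$ (or $TR$) contains a strongly $2$-embedded subgroup, and this does not imply the global outer automorphism group has one, so no contradiction with ``no essential objects'' follows; the proposed ``enumeration by hand'' is not carried out. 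You also never verify that $R$ is fully $K$-normalized, i.e.\ that $N_P^K(R)$ is a defect group of $b_R$ over $N_G^K(R)$, which is needed before \cite[Corollary 3.6]{P01} can be used and is not part of the notation you cite.

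For comparison, the paper's proof is short and direct, and uses the control hypothesis in two places you bypass. First, given any maximal Brauer pair $(U,\,g)$ of $b_R$ over $N_G^K(R)$, it builds a $b$-Brauer pair $(RU,\,e)\geq(R,\,b_R)$, conjugates it into $(P,\,b_P)$, and uses control to factor the conjugating element as $yz$ with $y\in C_G(R)$, $z\in N_G(P,\,b_P)$; this shows $N_P^K(R)$ is a defect group, i.e.\ $R$ is fully $K$-normalized. Second, with $U=N_P^K(R)$ and $N_{\mathcal F}^K(R)={\cal F}_{(U,\,g)}\big(N_G^K(R),\,b_R\big)$, any $x\in N_G^K(R)$ stabilizing a local Brauer pair $(T,\,d_T)$ induces on $T$ the same automorphism as some $y\in N_G^K(R)\cap N_G(P,\,b_P)$ (again by control and the definition of the $K$-normalizer category); since such $y$ normalizes $U$, Alperin's fusion theorem gives control by $N_{N_G^K(R)}(U)$. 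No reduction to $N_G(Q,\,b_Q)$, no essential-subgroup transfer, and no centrality of $Q$ are needed.
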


\begin{proof} Let $(U,\, g)$ be a maximal Brauer pair of the block $b_R$ of $N_G^K(R)$. Since $C_G(RU)=C_{C_G(R)}(U)$ is normal in $C_{N_G^K(R)}(U)$,
there is a block $e$ of $C_G(RU)$ such that $eg\neq 0$.
Since ${\rm Br}_U(b_R){\rm Br}_U(g)={\rm Br}_U(g)$ and
${\rm Br}_U(b_R)\in k C_{C_G(R)}(U)\subset k C_{N_G^K(R)}(U)$,
where ${\rm Br}_U$ denotes the Brauer homomorphism
$({\cal O} G)^U\rightarrow kC_G(U)$ and
$({\cal O} G)^U$ is the subalgebra of all elements in ${\cal O} G$ commuting with $U$,
${\rm Br}_U(b_R){\rm Br}_U(e)={\rm Br}_U(e)$.
This implies that $(RU,\, e)$ is a $b$-Brauer pair and contains $(R, \, b_R)$.
There is some $x\in G$ such that $(RU,\, e)^x\leq (P,\,b_P)$.
So $(R,\, b_R)^x\leq (P,\,b_P)$.
Since the block $b$ is controlled by $N_G(P)$,
there are $y\in C_G(R)$ and $z\in N_G(P,\, b_P)$ such that $x=yz$.
So $(RU,\, e)^y\leq (P,\,b_P)$ and $P\cap N_G^K(R)=U^y=N_P^K(R)$ is a defect group of the block $b_R$ of $N_G^K(R)$.
In particular, $(R,\,b_R)$ is fully $K$-normalized in $\mathcal{F}_{(P,\,b_P)}(G,\,b)$.

Assume that $U=N_P^K(R)$. Set ${\cal F}=\mathcal{F}_{(P,\,b_P)}(G,\,b)$.
By \cite[Corollary 3.6]{P01}, we may choose a suitable $g$,
so that $N_{\cal F}^K(R)$ is the Brauer category
${\cal F}_{(U,\, g)}\big(N_G^K(R),\, b_R\big)$.
Set $d=b_R$. We regard $d$ as a block of $N_G^K(R)$.
For any subgroup $T$ of $N_P^K(R)$,
denote by $d_T$ the block of $C_{N_G^K(R)}\big(N_P^K(R)\big)$
such that $(T,\, d_T)$ is the Brauer pair contained in $(U,\, g)$.
For any $x\in N_G^K(R)$ such that $(T,\,d_T)^x=(T,\,d_T)$,
by the definition of $N_{\cal F}^K(R)$, there is $y\in N_G^K(R)\cap N_G(P,\, b_P)$
such that $y$ and $x$ induce the same automorphism on $T$.
Since $N_G^K(R)\cap N_G(P,\, b_P)\subset N_{N_G^K(R)}(U)$,
the lemma follows from Alperin's fusion theorem.
\end{proof}

\begin{lem}\label{inertial subgroup of local subgroups}
  Assume that the block $b$ is controlled by $N_G(P)$. The following hold.

\noindent{\bf (i)} The quotient group $N_G(P,\,b_P)/C_G(P)$ has a suitable
$p$-complement $E/C_G(P)$ such that
 $N_E(R)/C_G(P)$ is isomorphic to a $p$-complement of the quotient group
 $N_{N_G(R,\, b_R)}\big(N_P(R)\big)/C_{N_G(R,\, b_R)}\big(N_P(R)\big)$.

\noindent{\bf (ii)}
 The block $b_R$ of $RC_G(R)$ is not nilpotent if and only if
 the quotient group $N_G(P,\,b_P)/C_G(P)$ has a suitable $2$-complement $E/C_G(P)$ such that $E$ centralizes $R$ and the group $E/C_G(P)$ is isomorphic to a
 $2$-complement of
 $N_{RC_G(R)}\big(RC_P(R),\, b_{RC_P(R)}\big)/C_{RC_G(R)}\big(RC_P(R)\big)$.
\end{lem}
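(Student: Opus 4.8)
The plan is to prove (i) and (ii) together, taking $K=\mathrm{Aut}(R)$ in (i) and $K=\mathrm{Inn}(R)$ in (ii). Write $\mathcal F=\mathcal F_{(P,\,b_P)}(G,\,b)$; by hypothesis every morphism of $\mathcal F$ is induced by $N_G(P,\,b_P)$-conjugation, and, as recorded just after Proposition \ref{fusion system}, a $2$-complement of $N_G(P,\,b_P)/C_G(P)$ has order $3$, so any order-$3$ subgroup of $N_G(P,\,b_P)/C_G(P)$ is a $2$-complement. First I would fix the relevant local object: in (i) put $L=N_G(R,\,b_R)$ and $V=N_P(R)$, and in (ii) put $H=RC_G(R)=N_G^{\mathrm{Inn}(R)}(R)$ and $D=RC_P(R)=N_P^{\mathrm{Inn}(R)}(R)$. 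By the first part of the proof of Lemma \ref{inherition}, $(R,\,b_R)$ is fully $K$-normalized, so $V$ (resp.\ $D$) is a defect group of the block $b_R$ of $L$ (resp.\ of $H$); by Lemma \ref{inherition} and \cite[Corollary 3.6]{P01} that block is controlled by the normalizer of its defect group, and its Brauer category is the corresponding normalizer subsystem of $\mathcal F$.

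The first substantial step is to pin down the inertial quotient of this local block. By Lemma \ref{hyperfocal subgroup of local subgroup} its hyperfocal subgroup $Q'$ is contained in $Q$; running the Glauberman argument from the proof of Lemma \ref{P=QC_P(E)} for the local block, its defect group splits as $Q'\rtimes C$ with $C$ centralized by the inertial quotient, so the inertial quotient embeds in $\mathrm{Aut}(Q')$ and acts on the abelian group $Q'\leq\mathbb Z_{2^n}\times\mathbb Z_{2^n}$ without nontrivial fixed points. By Lemma \ref{Aut(Q)} (applied to the subgroup $Q'$ of $Q$) such a $2'$-group is trivial or isomorphic to $\mathbb Z_3$, and it is $\cong\mathbb Z_3$ exactly when the local block is not nilpotent. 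I would also note that the quotient $N_{N_G(R,\,b_R)}(N_P(R))/C_{N_G(R,\,b_R)}(N_P(R))$ of (i) and the quotient $N_H(D,\,b_D)/C_H(D)$ of (ii) both have this inertial quotient as a $2$-complement.

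The heart of the proof is a lifting step. Assume first the local block is not nilpotent. In (ii) every $2'$-element of $H=RC_G(R)$ lies in $C_G(R)$ (it maps trivially to the $2$-group $H/C_G(R)$), so I may take a $3$-element $\gamma$ realizing a non-inner order-$3$ automorphism of $D$ in the Brauer category of $b_R$ of $H$ with $\gamma$ centralizing $R$; in (i) I take a $3$-element $\gamma\in N_L(V)$ realizing such an automorphism of $V$. The induced morphism of $\mathcal F$ is, by the control hypothesis, induced by conjugation by some $\delta\in N_G(P,\,b_P)$, and replacing $\delta$ by its $3$-part keeps $c_\delta|_V=c_\gamma|_V$ (resp.\ $c_\delta|_D=c_\gamma|_D$); then $\delta$ normalizes $R$ (resp.\ centralizes $R$), because $R\leq V$ (resp.\ $R\leq D$) and $c_\gamma$ has the corresponding property, and $\delta\notin C_G(P)$ since $c_\delta$ is nontrivial, so $\delta C_G(P)$ has order $3$. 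Taking $E$ to be the preimage of $\langle\delta C_G(P)\rangle$ — a $2$-complement of $N_G(P,\,b_P)/C_G(P)$ — one gets $N_E(R)/C_G(P)\cong\mathbb Z_3$ in (i), and in (ii) that $E$ centralizes $R$ and $E/C_G(P)\cong\mathbb Z_3$; this is exactly the inertial quotient. For the converse, suppose there is a $3$-element $\delta\in N_G(P,\,b_P)\setminus C_G(P)$ normalizing (resp.\ centralizing) $R$. Then $c_\delta|_V$ (resp.\ $c_\delta|_D$) cannot be an inner automorphism: if it were, it would have $2$-power order, hence be trivial, so $\delta$ would centralize $V\supseteq Q_0$ (resp.\ $D\supseteq Q_0$), forcing $\delta$ to centralize $Q$ by Lemma \ref{Aut(Q)} (an automorphism of $Q$ of $3$-power order fixing $Q_0$ pointwise is trivial), whence $\delta\in C_G(Q)\cap N_G(P,\,b_P)=C_P(Q)C_G(P)$ and $\delta C_G(P)$ would be at once a $2$- and a $3$-element, i.e.\ $\delta\in C_G(P)$ — a contradiction. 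Hence $c_\delta$ yields a non-inner order-$3$ automorphism of $V$ (resp.\ $D$) in the relevant Brauer category, so the local block is not nilpotent; and when the local block is nilpotent the same computation shows that no such $\delta$ exists, which gives $N_E(R)=C_G(P)$ for every $E$ in (i) and, using that $E$ has order $3$ modulo $C_G(P)$ and $Q_0\leq D$, that no suitable $E$ centralizes $R$ in (ii).

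The step I expect to be the main obstacle is precisely this lifting: transporting an order-$3$ inertial automorphism of the local subgroup $V$ (resp.\ $D$) up to $N_G(P,\,b_P)$ using that $b$ is controlled by $N_G(P)$, passing to its $3$-part, and checking that the $3$-part still behaves correctly on $R$ and nontrivially modulo $C_G(P)$, together with the bookkeeping needed to identify the normalizer-of-defect-group quotient in the statement as one having the inertial quotient as a $2$-complement (which is where the hypothesis that $b$ is controlled by $N_G(P)$, and that $|N_G(P,\,b_P)/PC_G(P)|=3$, is used). The two appeals to the Glauberman argument of Lemma \ref{P=QC_P(E)} and to Lemma \ref{Aut(Q)} are routine.
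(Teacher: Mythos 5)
Your proposal is correct in substance, but it takes a genuinely different route from the paper's. The paper exploits the control hypothesis wholesale: it forms the model $G'=P\rtimes E'$ with $E'=E/C_G(P)$, uses \cite[Corollary 3.6]{P01} (together with the full $K$-normalization of $(R,\,b_R)$ established in the first paragraph of the proof of Lemma \ref{inherition}) to identify the Brauer category of the block $b_R$ of $N_G^K(R)$ with the fusion system of $N_{G'}(R)=N_P(R)\rtimes N_{E'}(R)$, and then quotes the coprime-action fact \cite[CH. 5, Theorem 3.4]{G68} to see that $N_{E'}(R)$, resp.\ $C_{E'}(R)$, acts faithfully on $N_P(R)$, resp.\ $RC_P(R)$; hence these groups are literally $2$-complements of the relevant automizers, with no nilpotency dichotomy, no computation of the local inertial quotient, and no lifting of individual elements, and part (ii) then reduces to the observation that $|E/C_G(P)|=3$ forces $C_E(R)\in\{C_G(P),\,E\}$. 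You instead argue at the level of elements: you first pin the local inertial quotient down to $1$ or $\mathbb Z_3$ via Lemmas \ref{hyperfocal subgroup of local subgroup} and \ref{Aut(Q)}, then lift a generating $3$-element through the control hypothesis, pass to its $3$-part, and dispose of the nilpotent case by your converse computation (which correctly uses $Q_0\leq Z(P)$ from Proposition \ref{fusion system}(a) and the equality $N_G(P,\,b_P)\cap C_G(Q)=C_P(Q)C_G(P)$ from the proof of Lemma \ref{P=QC_P(E)}). This works and is more elementary, but it obtains the isomorphism in (i) essentially by an order count ($1$ or $3$ on both sides), whereas the paper's argument shows that the inclusion of $E$ itself induces the isomorphism, a sharper fact that is invoked later in the proof of Lemma \ref{lower defect group C_P(E)}; your construction does yield this in the non-nilpotent case, since your $\delta$ maps onto the local inertial quotient, but it deserves to be said explicitly. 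Two points to tighten: the appeal to ``the Glauberman argument of Lemma \ref{P=QC_P(E)}'' to embed the local inertial quotient into $\mathrm{Aut}(Q')$ is circular as phrased, because that splitting already presupposes that the quotient has order $3$ and acts freely; what you actually need is only faithfulness of the inertial quotient on the local hyperfocal subgroup $Q'$, which follows from a coprime commutator argument (if a $2'$-element $e$ of the automizer centralizes $Q'$, then $[V,\,e]\leq Q'\leq C_V(e)$, so $[V,\,e]=[V,\,e,\,e]=1$) and already gives ``trivial or $\mathbb Z_3$''. Also, in (i) the quotient in the statement should be read with the Brauer pair $b_{N_P(R)}$ attached (as the paper itself does when it applies the lemma in the proof of Lemma \ref{ctrlb}), so that its $2$-complement is exactly the local inertial quotient your lifted element hits; this ambiguity is present in the paper's own statement and proof as well, so it is not a defect specific to your argument.
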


\begin{proof} {\bf (i)}
Set $E'=E/C_G(P)$ and $G'=P\rtimes E'$, the semidirect product of $P$ by $E'$.
Since the block $b$ is controlled by $N_G(P)$, the correspondence
$(T,\,b_T)\mapsto T$ induces an isomorphism between
the Brauer category $\mathcal{F}_{(P,\,b_P)}(G,\,b)$ and
the fusion system $\cal F$ of the group $G'$ with respect to the Sylow $p$-subgroup $P$. The $b$-Brauer pair $(R,\,b_R)$ is fully normalized in $\mathcal{F}_{(P,\,b_P)}(G,\,b)$ (see the first paragraph of the proof of Lemma \ref{inherition}).
By \cite[Corollary 3.6]{P01}, the Brauer category
${\cal F}_{\big(N_P(R), b_{N_P(R)}\big)}\big(N_G(R,\,b_R),\,b_R\big)$
is isomorphic to the fusion system of the group $N_{G'}(R)$ with respect to
the Sylow $p$-subgroup $N_P(R)$.
Clearly $N_{G'}(R)=N_P(R)\rtimes N_{E'}(R)$.
By \cite[CH. 5, Theorem 3.4]{G68},
the centralizer of $N_P(R)$ in $N_{E'}(R)$ is trivial.
So $N_{E'}(R)$ is isomorphic to a $p$-complement of
the automorphism group of $N_P(R)$ in the fusion system $\cal F$.
Note that $N_{E'}(R)=N_E(R)/C_G(P)$. The proof is done.

{\bf (ii)} By a proof similar to the proof of the statement {\bf (i)},
we prove that the quotient group $N_G(P,\,b_P)/C_G(P)$ has
a suitable $p$-complement $E/C_G(P)$ such that
 $C_E(R)/C_G(P)$ is isomorphic to a $p$-complement of
 the quotient group
 $N_{RC_G(R)}\big(RC_P(R),\,b_{RC_P(R)}\big)/C_G\big(R C_P(R)\big)$.
 Assume that the block $b_R$ of $RC_G(R)$ is not nilpotent.
Since the block $b_R$ of $RC_G(R)$ is controlled by the normalizer of
its defect group $RC_P(R)$ (see Lemma \ref{inherition}),
$C_E(R)$ is not equal to $C_G(P)$;
otherwise,
$N_{RC_G(R)}\big(RC_P(R),\,b_{RC_P(R)}\big)=C_G\big(R C_P(R)\big)$ and
the block $b_R$ of $RC_G(R)$ is nilpotent.
Since the quotient group $E/C_G(P)$ has order $3$,
we have $E=C_E(R)$ and thus $E$ centralizes $R$.
Now the necessity of the statement {\bf (ii)} is proved.
The sufficiency of the statement {\bf (ii)} is trivial.
\end{proof}

\begin{lem}\label{Nilpotent}
Assume that the quotient group $N_G(Q,\, b_Q)/C_G(Q)$ is a $p$-group.
Then the block $b$ is nilpotent.
\end{lem}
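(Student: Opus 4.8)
The plan is to show that the Brauer category $\mathcal{F}_{(P,\,b_P)}(G,\,b)$ is nilpotent; this is equivalent to $b$ being nilpotent, and it is preserved under the isomorphism of Brauer categories supplied by \cite[Theorem 2]{W14}. So I would first replace $G$ by $N_G(Q,\,b_Q)$, so that $Q$ is normal in $G$, $b=b_Q$, and the hypothesis reads ``$G/C_G(Q)$ is a $2$-group''. By Proposition \ref{fusion system} the block $b$ is either not controlled by $N_G(P)$, or controlled by $N_G(P)$; the idea is to rule out the first alternative outright and, under the second, to rule out everything except nilpotency.

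Suppose $b$ is not controlled by $N_G(P)$. By Proposition \ref{fusion system}(b) and the discussion following its proof, $\mathcal{F}_{(P,\,b_P)}(G,\,b)$ has a unique essential object $(S,\,b_S)$ with $Q\leq S\leq P$ and $N_G(S,\,b_S)/SC_G(S)\cong\mathrm{S}_3$; thus $N_G(S,\,b_S)$ has a normal subgroup $A$ with $A/SC_G(S)$ of order $3$, and the $2$-complement $E_S/C_G(S)$ of $A/C_G(S)$ has order $3$. By Lemma \ref{P=QC_P(E)}(ii) one has $E_S\cap C_G(Q)=C_G(P)$. Since $C_G(P)\leq C_G(Q)$ and $E_S\leq N_G(S,\,b_S)\leq G$, the image of $E_S$ in $G/C_G(Q)$ is isomorphic to $E_S/(E_S\cap C_G(Q))=E_S/C_G(P)$, whose order is divisible by $|E_S/C_G(S)|=3$. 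This contradicts the hypothesis that $G/C_G(Q)$ is a $2$-group, so this alternative cannot occur.

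Hence $b$ is controlled by $N_G(P)$. If $b$ is nilpotent there is nothing more to do, so suppose it is not. Then $N_G(P,\,b_P)\neq PC_G(P)$ (otherwise the control hypothesis together with $N_G(P,\,b_P)=PC_G(P)$ would make $b$ nilpotent), so $N_G(P,\,b_P)/PC_G(P)$ is a nontrivial group of odd order, and therefore the $2$-complement $E/C_G(P)$ of $N_G(P,\,b_P)/C_G(P)$ appearing in Lemma \ref{P=QC_P(E)}(i) is nontrivial; that lemma also gives $E\cap C_G(Q)=C_G(P)$. As before, $C_G(P)\leq C_G(Q)$ and $E\leq N_G(P,\,b_P)\leq G$ force the image of $E$ in $G/C_G(Q)$ to be isomorphic to $E/C_G(P)$, a nontrivial group of odd order, again contradicting the hypothesis. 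So $b$ must be nilpotent.

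The only place any real thought enters is the observation that both structural alternatives of Proposition \ref{fusion system} funnel every nontrivial piece of fusion into an automorphism of odd order (in fact of order $3$, by Lemma \ref{Aut(Q)}) acting on $Q$ itself, the mechanism being the identities $E_S\cap C_G(Q)=C_G(P)$ and $E\cap C_G(Q)=C_G(P)$ of Lemma \ref{P=QC_P(E)}; the remainder is bookkeeping along the chain $C_G(P)\leq C_G(S)\leq C_G(Q)$. Had I not reduced to $G=N_G(Q,\,b_Q)$, the one mildly delicate point would be to verify $N_G(P,\,b_P)\subseteq N_G(Q,\,b_Q)$ and $N_G(S,\,b_S)\subseteq N_G(Q,\,b_Q)$, which holds because $Q$, being the hyperfocal subgroup, is strongly closed in $\mathcal{F}_{(P,\,b_P)}(G,\,b)$ and $(Q,\,b_Q)$ is the unique $b$-Brauer pair on $Q$ contained in $(P,\,b_P)$; the reduction makes this automatic.
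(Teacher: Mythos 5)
Your two-case argument is not circular (Proposition \ref{fusion system} and Lemma \ref{P=QC_P(E)} precede Lemma \ref{Nilpotent} and do not use it), and read strictly under the paper's standing convention that $Q\cong\mathbb Z_{2^n}\times\mathbb Z_{2^n}$ with $n\geq 1$ it is logically sound. But note what it actually delivers: every branch either contradicts the hypothesis or ends in ``$b$ is nilpotent'', and since a nilpotent block has trivial hyperfocal subgroup, under the standing convention the non-contradictory branch is empty; so your proof establishes the lemma only vacuously, by showing that $N_G(Q,b_Q)/C_G(Q)$ can never be a $2$-group when the hyperfocal subgroup is nontrivial homocyclic. That is not the statement the paper needs. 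Lemma \ref{Nilpotent} is applied later to blocks of local subgroups whose hyperfocal subgroups are only known to be contained in $Q$, typically of the form $\mathbb Z_{2^i}\times\mathbb Z_{2^j}$ with $i\neq j$ or trivial (see the proofs of Lemmas \ref{lower defect group C_P(E)}, \ref{wreathed product} and \ref{ctrlb}); for such blocks Proposition \ref{fusion system} and Lemma \ref{P=QC_P(E)}, on which your entire argument rests (and which in turn rest on Lemma \ref{Aut(Q)}), are simply not available, so your proof cannot be quoted where the lemma is actually used.

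The missing idea is that no structure of $Q$ is needed at all. Since $P$ is a defect group of $b_Q$ viewed as a block of $N_G(Q,b_Q)$, the image $PC_G(Q)/C_G(Q)$ is a Sylow $2$-subgroup of $N_G(Q,b_Q)/C_G(Q)$, so the hypothesis forces $N_G(Q,b_Q)=PC_G(Q)$. The block $b_Q$ of $C_G(Q)$ is nilpotent by \cite[Proposition 4.2]{P00} (this is the one place where the hyperfocal property of $Q$ enters), hence $b_Q$ remains nilpotent as a block of the $2$-extension $PC_G(Q)=N_G(Q,b_Q)$ by \cite[Proposition 6.5]{KP}, and \cite[Theorem 2]{W14} transfers nilpotency from this block to $b$. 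This is both much shorter than your route and valid for an arbitrary hyperfocal subgroup, which is exactly the generality the later sections require; I would redo the proof along these lines rather than through the case analysis of Proposition \ref{fusion system}.
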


\begin{proof}
By the assumption, $N_G(Q,\, b_Q)$ is equal to $PC_{G}(Q)$.
Since $b_Q$ as a block of $C_G(Q)$ is nilpotent,
so is $b_Q$ as a block of $N_G(Q,\, b_Q)$.
Then by \cite[Theorem 2]{W14}, the block $b$ is nilpotent.
\end{proof}

Let $R$ be a normal $2$-subgroup of $G$ such that $|G:C_G(R)|$ is a $2$-power.
Set $\bar G=G/R$ and let $\bar{b}$ be the image of $b$ in ${\cal O} \bar{G}$.
Then $\bar b$ is a block of $\bar G$ with defect group $\bar P=P/R$.
Denote by $K$ the converse image of $C_{\bar G}(\bar P)$ in $G$.
We have $C_G(P)\leq K\leq N_G(P)$ and the index of $C_G(P)$ in $K$ is a $2$-power. There is a unique block $\tilde b_P$ of $K$ covering $b_P$.
Denote by $\bar b_{\bar P}$ the image of $\tilde b_P$ in ${\cal O} C_{\bar G}(\bar P)$. The pair $(\bar P, \, \bar b_{\bar P})$ is a maximal $\bar b$-Brauer pair.

\begin{lem}\label{reduced to the trivial subgroup}
Keep the notation
and assumption in the paragraph above.
Then $Q\cap R=1$ and $\bar{Q}=QR/R$ is the hyperfocal subgroup of $\bar{b}$
with respect to the maximal $\bar b$-Brauer pair $(\bar P, \, \bar b_{\bar P})$.
\end{lem}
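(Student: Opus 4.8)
The plan is to read off both assertions from the commutator description of the hyperfocal subgroup (as recalled in \S1, see \cite[1.3]{P00}), transported along the canonical surjection $\pi\colon G\to\bar G=G/R$. The one elementary input used throughout is that $C_G(R)$ is normal in $G$ with $2$-group quotient, so every element of $G$ of odd order centralises $R$; in particular $\mathrm{Aut}_{\mathcal F}(R)=N_G(b_R)/C_G(R)$ is a $2$-group, where $\mathcal F:=\mathcal F_{(P,\,b_P)}(G,\,b)$, and (as $R\trianglelefteq G$ and $R\le P$) $R$ is a normal subgroup of $\mathcal F$.

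\textbf{Step 1: $Q\cap R=1$.} Assume $Q\cap R\ne1$. Since $Q\cong\mathbb Z_{2^n}\times\mathbb Z_{2^n}$ and $Q_0=\Omega_1(Q)$, this forces $Q_0\cap R\ne1$. On the other hand $Q=\mathrm{hyp}(\mathcal F)\ne1$, so there are a $b$-Brauer pair $(U,\,b_U)\subseteq(P,\,b_P)$, a $2'$-element $x$ of $N_G(U,\,b_U)$ and $u\in U$ with $t:=[u,\,x]\ne1$; set $T=\langle[U,\,x]\rangle\le Q$. Conjugation by $x$ permutes the set $[U,\,x]$ and hence stabilises $T$; it also acts non-trivially on $T$, for if $x$ centralised $t$ one would get $x^{-j}ux^{j}=t^{-j}u$ for all $j$, whence $t^{|x|}=1$ and $t=1$ ($|x|$ odd, $t$ a $2$-element). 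Since $T\ne1$ is a subgroup of $\mathbb Z_{2^n}\times\mathbb Z_{2^n}$ admitting a non-trivial automorphism of odd order, and the automorphism group of a cyclic or of a non-homocyclic rank-two abelian $2$-group is a $2$-group, we get $T\cong\mathbb Z_{2^a}\times\mathbb Z_{2^a}$; then $\Omega_1(T)=Q_0$, $x$ normalises $Q_0$, and the induced action on $Q_0$ has odd order $\ne1$ (the kernel of reduction modulo $2$ on $\mathrm{Aut}(T)$ is a $2$-group), hence by Lemma \ref{Aut(Q)} cyclically permutes the three involutions of $Q_0$. But $x$ centralises $R\supseteq Q_0\cap R\ne1$, so it fixes some involution of $Q_0$ — a contradiction. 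Thus $Q\cap R=1$, and $\pi$ restricts to an isomorphism $Q\xrightarrow{\ \sim\ }\bar Q=QR/R$.

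\textbf{Step 2: $\bar Q$ is the hyperfocal subgroup of $\bar b$ with respect to $(\bar P,\,\bar b_{\bar P})$.} Here I would use the compatibility of Brauer categories with quotients by a normal $p$-subgroup (\cite[Corollary 3.6]{P01}, \cite{KP}): for $R\le W\le P$ the $b$-Brauer pair $(W,\,b_W)\subseteq(P,\,b_P)$ corresponds to the $\bar b$-Brauer pair $(W/R,\,\bar b_{W/R})\subseteq(\bar P,\,\bar b_{\bar P})$ with $N_{\bar G}(W/R,\,\bar b_{W/R})=N_G(W,\,b_W)/R$, and every $\bar b$-Brauer pair inside $(\bar P,\,\bar b_{\bar P})$ arises this way. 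A $2'$-element of $N_{\bar G}(W/R,\,\bar b_{W/R})$ is the image of a $2'$-element of $N_G(W,\,b_W)$ (the $2'$-part of any preimage, the kernel $R$ being a $2$-group), and then $\pi([W,\,x])=[W/R,\,\bar x]$. This gives $\mathrm{hyp}(\bar b)\subseteq\pi(Q)=\bar Q$. Conversely $\bar Q=\pi(Q)$ is generated by the images $\pi([U,\,x])$ over all $b$-Brauer pairs $(U,\,b_U)\subseteq(P,\,b_P)$ and all $2'$-elements $x$ of $N_G(U,\,b_U)$; when $R\le U$ this is a defining generator of $\mathrm{hyp}(\bar b)$, and when $R\not\le U$ one reduces to $UR$: as $x$ centralises $R$ it normalises $UR$ and $[u,\,x]$ depends only on the image of $u$ in $UR/R$, while — using that $R$ is normal in $\mathcal F$ with $2$-group automiser — $c_x|_U$ is induced by a $2'$-element of $N_G(UR,\,b_{UR})$ after replacing $x$ by a suitable power, so $\pi([U,\,x])$ again lies in $\mathrm{hyp}(\bar b)$. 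Hence $\mathrm{hyp}(\bar b)=\bar Q=QR/R$.

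The substantive difficulty is confined to Step 2: assembling the quotient-category formalism of \cite{P00,P01,KP} so as to identify $(\bar P,\,\bar b_{\bar P})$ with $(P,\,b_P)/R$, to match normalisers of corresponding Brauer pairs, and — in the reverse inclusion — to lift a $2'$-automorphism of $UR$ past the $2$-group $\mathrm{Aut}_{\mathcal F}(R)$ while keeping control of the block $b_{UR}$. Step 1, by contrast, is self-contained modulo Lemma \ref{Aut(Q)} and elementary commutator calculus.
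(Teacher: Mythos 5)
Your Step 1 is correct and is genuinely different from the paper's argument: the paper deduces $Q\cap R=1$ by splitting into the controlled and non-controlled cases, invoking Proposition \ref{fusion system} and Lemma \ref{P=QC_P(E)} to produce the subgroup $E$ (resp. $E_S$), which acts freely on $Q-\{1\}$ and lies in $C_G(R)$; your commutator argument with a single generator $[U,x]\neq 1$, the oddness of $|x|$, and the fact that only homocyclic rank-two $2$-groups admit nontrivial odd automorphisms, reaches the same conclusion without any of that machinery, and the computation ($x$ centralizes $R$, hence fixes an involution of $Q_0$, contradicting Lemma \ref{Aut(Q)}) checks out. That part buys a self-contained, case-free proof of the first assertion.

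Step 2, however, is a plan rather than a proof, and the part you defer is exactly the substance of the lemma. Two concrete problems. First, the asserted correspondence with $N_{\bar G}(W/R,\,\bar b_{W/R})=N_G(W,\,b_W)/R$ is stronger than what \cite{P01} or \cite{KP} hand you: $C_{\bar G}(W/R)$ is in general strictly larger than $C_G(W)R/R$ (its preimage differs from $C_G(W)R$ by a $2$-group, via coprime stability), so already making sense of $\bar b_{W/R}$ and comparing normalizers requires the covering/domination argument used in the paragraph defining $\bar b_{\bar P}$, and what one can expect is control only up to a $2$-group factor; that suffices for hyperfocal computations, but it is precisely what has to be argued, not quoted. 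Second, in the reverse inclusion your treatment of pairs with $R\not\le U$ rests on the parenthetical claim that $R$ is normal in $\mathcal F_{(P,\,b_P)}(G,\,b)$ and on the assertion that $c_x|_U$ is induced by a $2'$-element of $N_G(UR,\,b_{UR})$; neither is immediate from $R\trianglelefteq G$ (one must check that a conjugation carrying $(U,\,b_U)$ into $(P,\,b_P)$ also carries $(UR,\,b_{UR})$ to the pair contained in $(P,\,b_P)$, and then make the lifted element $2'$ while preserving its action on $U$). The paper avoids this per-pair lifting altogether: by Lemma \ref{P=QC_P(E)} there is a single subgroup $T\in\{P,\,S\}$ with $R\le T$ (in the non-controlled case $[R,\,Q_0]\le Q\cap R=1$ gives $R\le C_P(Q_0)=S$) and $Q=[T,\,O^2(N_G(T,\,b_T))]$, so $\bar Q\subseteq\tilde Q$ follows by projecting this one subgroup, while the inclusion $\tilde Q\subseteq\bar Q$ is taken from the second and third paragraphs of the proof of \cite[Lemma 8]{W14}. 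Until you either carry out your quotient formalism in detail or route the reverse inclusion through Lemma \ref{P=QC_P(E)} as the paper does, Step 2 remains a genuine gap.
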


\begin{proof}
We firstly prove the intersection $Q\cap R=1$.
Suppose that the block $b$ is not controlled by $N_G(P)$.
By Proposition \ref{fusion system},
there is a unique essential object $(S,\, b_S)$ in $\mathcal{F}_{(P,\,b_P)}(G,\,b)$.
We fix a subgroup $E_S$ of $N_G(S,\, b_S)$ as in the paragraph
above Lemma \ref{P=QC_P(E)}.
Clearly $N_G(S,\, b_S)\subset N_G(Q,\,b_Q)$ and by Lemma \ref{P=QC_P(E)},
the inclusion $E_S\subset N_G(Q,\,b_Q)$ induces an injective group homomorphism $E_S/C_G(S)\rightarrow N_G(Q,\, b_Q)/C_G(Q)$.
In particular, $E_S$ acts freely on the set of nontrivial elements of $Q$.
On the other hand, since $G/C_G(R)$ is a $2$-group,
$E_S$ has to be contained in $C_G(R)$.
Therefore the intersection of $Q$ and $R$ is trivial.
Suppose that the block $b$ is  controlled by $N_G(P)$.
Then we similarly prove the equality $Q\cap R=1$, replacing $(S,\, b_S)$ by $(P,\, b_P)$ and $E_S$ by $E$ in the paragraph under the proof of Proposition \ref{fusion system}. Summarizing the above, we now have the equality $Q\cap R=1$.

Let $\tilde Q$ be the hyperfocal subgroup of $\bar{b}$ with respect to
the maximal $\bar b$-Brauer pair $(\bar P, \, \bar b_{\bar P})$.
Using the second paragraph and the third paragraph in the proof of \cite[Lemma 8]{W14}, we prove that $\tilde Q$ is contained in $\bar{Q}$.
By Lemma \ref{P=QC_P(E)}, there always exists a subgroup $T$ of $P$ such that
$R\leq T$ and $Q=[T,\,O^2\big(N_G(T,\,b_T)\big)]$.
Since the image of $N_G(T,\,b_T)$ in $\bar G$ is contained in
$N_{\bar G}(\bar T,\, \bar b_{\bar T})$, we have $\bar{Q}\subset \tilde Q$.
Therefore $\bar{Q}= \tilde Q$.
\end{proof}

\section{Lower defect groups of the block $b$}

\bigskip\quad\, Lower defect groups of blocks associated with $p$-sections and their multiplicities are defined in \cite{F82} in the paragraph above \cite[Chapter V, Lemma 10.8]{F82}.
In order to avoid lengthening the paper, w
e omit an introduction to them.
Let $R$ be a lower defect group of $b$ associated with the identity element of $G$,
and denote by $m(b,\,R)$ its multiplicity.
By \cite[Chapter V, Corollary 10.13]{F82}, we have
\begin{equation}\label{formu}
l(b)=\sum_{R}m(b,\,R),
\end{equation}
where $R$ runs through a set of representatives for
the $G$-conjugacy classes of $p$-subgroups of $G$.
There is another formula on the number $l(b)$ in terms of
lower defect groups of blocks of local subgroups.
Let $\mathcal{T}$ be a set of subgroups of $P$ such that
$\{(T,\,b_T)\mid T\in\mathcal{T}\}$ is exactly a set of representatives for
the $G$-conjugacy classes of $b$-Brauer pairs.
By \cite[Section 4]{W14}, we have
\begin{equation}\label{formular of l(b)}
  l(b)=\sum_{T\in\mathcal{T}}m\big((b_T)^{N_G(T,\,b_T)},\,T\big).
\end{equation}
See the definition of $(b_T)^{N_G(T,\,b_T)}$ in the first paragraph in \cite[\S 14]{A86}.

\begin{lem}\label{ldg&qg}
Let $R$ be a normal $p$-subgroup of $G$ such that $|G:C_G(R)|$ is a $p$-power.
Set $\bar{G}=G/R$ and let $\bar{b}$ be the block of $\bar{G}$ determined by the block $b$ of $G$.
Then $m(b,\,R)=m(\bar{b},\,1)$.
\end{lem}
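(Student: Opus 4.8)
The plan is to use the block-theoretic bijection between $p$-permutation modules, or more directly the multiplicity-one description of lower defect groups in terms of decomposition maps and Brauer homomorphisms. Since $R$ is normal in $G$, the block $b$ and its image $\bar b$ are linked by the surjection $\mathcal O G \to \mathcal O \bar G$, and because $|G : C_G(R)|$ is a $p$-power, the block $b$ of $G$ and the block $\bar b$ of $\bar G$ have "the same'' local structure away from $R$. Concretely, I would first recall Broué's and Olsson's description: $m(b, R)$ is the multiplicity of $R$ as a lower defect group associated to the identity section, and by the work reproduced in \cite[Chapter V]{F82} it can be computed as the rank of a certain map built from the Brauer homomorphism $\mathrm{Br}_R$ composed with the generalized decomposition map for the section of $1$. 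The key point is that $\mathrm{Br}_R$ on $(\mathcal O G)^R$ lands in $kC_G(R)$, and since $R \trianglelefteq G$ we have $C_G(R) \trianglelefteq G$ with $G/C_G(R)$ a $p$-group, so $\mathrm{Br}_R(b)$ is a sum of blocks of $kC_G(R)$ permuted transitively by the $p$-group $G/C_G(R)$ — forcing $\mathrm{Br}_R(b)$ to be a single block $\beta$ with $(C_G(R), \beta)$ a self-stabilizing $b$-Brauer pair, and $R$ is then a defect group of $\beta$ viewed as a block of $RC_G(R)$.

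Next I would set up the comparison with $\bar G = G/R$. The inflation/deflation along $\mathcal O G \to \mathcal O \bar G$ identifies $\mathcal O \bar G \bar b$-modules with $\mathcal O G b$-modules on which $R$ acts trivially; since $R$ is a $p$-group this is compatible with Brauer characters and with the decomposition matrix, and it sends the block $b$ to $\bar b$ bijectively (using that $R$ is in the kernel so $R \le O_p(G)$ lies in every defect group, hence $b$ and $\bar b$ have "the same'' defect data modulo $R$). Under this identification the trivial subgroup $1 \le \bar G$ corresponds exactly to $R \le G$ as a candidate lower defect group, and all larger $p$-subgroups of $\bar G$ correspond to $p$-subgroups of $G$ containing $R$. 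The formula \eqref{formu} applied to both $b$ and $\bar b$, together with the fact (which I would verify) that for every $p$-subgroup $\bar T$ of $\bar G$ with preimage $T \ge R$ one has $m(b, T) = m(\bar b, \bar T)$ by the same inflation argument applied relative to $T/R$, lets me peel off the $R$-term. Actually the cleanest route is: prove directly that the decomposition-map computation defining $m(b, R)$ and the one defining $m(\bar b, 1)$ are literally the same linear-algebra problem, because inflation is an isometry between the relevant spans of generalized decomposition numbers and it matches $\mathrm{Br}_R^G$ with $\mathrm{Br}_1^{\bar G} = \mathrm{id}$.

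The main obstacle I anticipate is keeping the bookkeeping of \emph{which} local subgroup and \emph{which} block correspond under deflation precise enough to conclude equality of multiplicities and not merely of the full sums $l(b) = l(\bar b)$. The condition $|G : C_G(R)|$ is a $p$-power is exactly what guarantees $\mathrm{Br}_R(b)$ is a single block and that no collapsing of $G$-orbits of Brauer pairs occurs when passing to $\bar G$, so I would state that carefully and cite \cite[Proposition 6.5]{KP} or the standard theory of Brauer pairs under normal subgroups. A secondary technical point is that lower defect groups as defined in \cite{F82} are only defined up to $G$-conjugacy with a well-defined multiplicity; I must check that the inflation correspondence respects $G$-conjugacy (it does, since it is induced by the quotient homomorphism $G \to \bar G$ which is surjective, so $\bar G$-conjugacy classes of $p$-subgroups containing $1$ pull back bijectively to $G$-conjugacy classes of $p$-subgroups containing $R$). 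Once these correspondences are nailed down, the equality $m(b, R) = m(\bar b, 1)$ follows formally.
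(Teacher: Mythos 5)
Your overall strategy---transporting the lower-defect-group computation along the quotient $G\to\bar G$---is the right instinct and is close in spirit to the paper's proof, but as written there is a genuine gap: the decisive quantitative fact is never proved or cited, only announced (``which I would verify'', ``literally the same linear-algebra problem''). What actually has to be checked is precisely the content of the lemma: because $|G:C_G(R)|$ is a $p$-power, every $p'$-element of $G$ lies in $C_G(R)$, so (with $R\trianglelefteq G$) the $p$-regular classes of $G$ biject with those of $\bar G$, the bijection is compatible with the block partitions attached to $b$ and $\bar b$, and it multiplies the orders of class defect groups by $|R|$; equivalently, the Cartan matrices satisfy $C_b=|R|\,C_{\bar b}$. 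The paper gets this at once from \cite[Theorems 5.8.11 and 5.11.6]{NT89} together with the description of $m(b,R)$ as the number of classes $C\in\Lambda_b$ with $D(C)=_G R$ (note $R\le D(C)$ for every $p$-regular class in the block, since its elements centralize $R$; so ``elementary divisor $|R|$'' already pins down ``defect group conjugate to $R$''), and then simply compares multiplicities of elementary divisors of $C_b$ and $C_{\bar b}$. Your plan never isolates this input: the assertion that inflation matches $\mathrm{Br}_R^G$ with the identity on $\bar G$, and the claim $m(b,T)=m(\bar b,T/R)$ for all $T\ge R$, are restatements of what is to be proved rather than steps toward it.

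Moreover, the step you advertise as ``the key point'' is false. The hypothesis that $|G:C_G(R)|$ is a $p$-power does not force $\mathrm{Br}_R(b)$ to be a single block of $kC_G(R)$: the blocks of $C_G(R)$ lying under $b$ form one $G$-orbit whose length divides the $p$-power $|G:C_G(R)|$, and that length can be a positive power of $p$. For instance, with $p=2$, $G=(\mathbb Z_3\times\mathbb Z_4)\rtimes\mathbb Z_2$ where the involution inverts both factors, and $R=\mathbb Z_4$, one has $C_G(R)=\mathbb Z_3\times\mathbb Z_4$ of index $2$, and the non-principal block of $G$ covers the two blocks of $C_G(R)$ swapped by the involution, so $\mathrm{Br}_R(b)$ is a sum of two block idempotents. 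Likewise, $R$ need not be a defect group of such a block viewed in $RC_G(R)$ (take $G$ a $2$-group and $R$ a normal subgroup with $C_G(R)\not\le R$). These claims are not needed for a correct proof---the paper's argument avoids them entirely---but since your proposed correspondence is built on them, the argument as outlined would have to be repaired by replacing them with the Cartan-matrix (equivalently, class-counting) comparison above.
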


\begin{proof}
Let $\mathrm{Cl}(G_{p'})$ be the set of $p$-regular classes of $G$
and $\mathrm{Bl}(G)$ the set of blocks of $G$.
There is a block partition
$\bigcup_{b\in\mathrm{Bl}(G)}\Lambda_b$ of $\mathrm{Cl}(G_{p'})$.
Thus $m(b,\,R)=|\{C\in\Lambda_b\,|\,D(C)=_GR\}|$,
where $D(C)$ denotes a defect group of $C$.
By \cite[Theorem 5.11.6]{NT89},
the elementary divisors of the Cartan matrix $C_b$ of the block $b$
are given by $\{|D(C)|\,\big|\,C\in\Lambda_b\}$.
Since $|G:C_G(R)|$ is a $p$-power,
by \cite[Theorem 5.8.11]{NT89}
$C_b=|R|C_{\bar{b}}$,
where $C_{\bar{b}}$ denotes the Cartan matrix of the block $\bar{b}$.
Thus we have $m(b,\,R)=m(\bar{b},\,1)$
\end{proof}

\begin{lem}\label{P=Q lower defect group}
Assume that $P=Q=\mathbb Z_{2^n}\times \mathbb Z_{2^n}$.
We have
$m(b,\,R)=
\left\{
  \begin{array}{ll}
    1 & \mbox{\rm if $R$ is conjugate to $P$; } \\
    2 & \mbox{\rm if $R$ is equal to $1$; } \\
    0 & \mbox{\rm otherwise.}
  \end{array}
  \right.
$
\end{lem}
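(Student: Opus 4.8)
The plan is to express each $m(b,\,R)$, through Brauer correspondence, in terms of a local block covering a nilpotent block, and then to read off the answer from the split extension $P\rtimes\mathbb Z_3$.

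First I would collect the relevant local structure. Since $P=Q$ is abelian we have $C_P(Q_0)=P$, so the alternative $|P:C_P(Q_0)|=2$ in Proposition \ref{fusion system}({\bf b}) cannot occur; hence $b$ is controlled by $N_G(P)$. By the remarks following Proposition \ref{fusion system} and by Lemmas \ref{Aut(Q)} and \ref{P=QC_P(E)}, the inertial quotient $E/C_G(P)$ is cyclic of order $3$ and acts on $P$ with $[P,\,E]=P$, i.e.\ freely on $P\setminus\{1\}$. Consequently, for every subgroup $1\ne R\le P$ we get $C_E(R)=C_G(P)$, so $C_{\mathcal F}(R)$ is the fusion system of $P$ and the block $b_R$ of $C_G(R)$ is nilpotent; since $P$ is abelian it centralises $R$, and maximality of $(P,\,b_P)$ forces $P$ to be a defect group of $b_R$. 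Finally, applying Lemma \ref{inherition} with $K=\mathrm{Aut}(R)$ (so that $N_P^K(R)=P$) together with Alperin's fusion theorem, the group $N_G(R,\,b_R)/C_G(R)$ is the restriction to $R$ of $\mathrm{Aut}_{N_G(P,\,b_P)}(P)=E/C_G(P)$, hence is cyclic of order $1$ or $3$, and, via a lift into $N_G(P,\,b_P)$, acts on $P$ as a restriction of the free $E$-action.

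Now I would fix a $p$-subgroup $R$ and use the refinement of $(\ref{formu})$ furnished by $(\ref{formular of l(b)})$, namely the transfer of lower defect group multiplicities to Brauer correspondents (see \cite[Section 4]{W14} and \cite{F82}): $m(b,\,R)=m\big((b_R)^{N_G(R,\,b_R)},\,R\big)$, this being $0$ unless $R$ is the first component of a $b$-Brauer pair. Suppose $R\ne 1$. Then $(b_R)^{N_G(R,\,b_R)}$ covers the nilpotent block $b_R$ of the normal subgroup $C_G(R)$, with quotient $M:=N_G(R,\,b_R)/C_G(R)$ of order $1$ or $3$ whose canonical action on the defect group $P$ of $b_R$ is, by the previous paragraph, a restriction of the free $E$-action. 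By the Külshammer--Puig description of blocks covering nilpotent blocks \cite{KP} — the twist being trivial since $H^2(\mathbb Z_3,\,\mathcal O^\times)=1$, and the extension splitting since $H^2(\mathbb Z_3,\,P)=1$ — the block $(b_R)^{N_G(R,\,b_R)}$ is Morita equivalent to $\mathcal O[P\rtimes M]$. If $M=1$ this is, up to Morita equivalence, the nilpotent block $\mathcal OP$, with Cartan matrix $(|P|)$. If $M\cong\mathbb Z_3$, put $W=P\rtimes\mathbb Z_3$: since the free action makes $1+h+h^2$ annihilate $P$ for every $h$ of order $3$, every element of $W$ has order a power of $2$ or equal to $3$, so the $2$-regular classes of $W$ are the trivial class, of defect group $P$, and exactly two classes of elements of order $3$, of defect group $1$; moreover $\mathcal OW$ is a single block. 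In all cases \cite[Chapter V, Corollary 10.13]{F82} shows that the elementary divisors of the Cartan matrix of $(b_R)^{N_G(R,\,b_R)}$ lie in $\{1,\,|P|\}$, with $|P|$ occurring exactly once (the unique maximal-order class then having defect group conjugate to $P$). Hence $m(b,\,P)=1$, while for $1\ne R<P$ we have $|R|\notin\{1,\,|P|\}$ and therefore $m(b,\,R)=0$.

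It remains to compute $m(b,\,1)$, for which I would determine $l(b)$ and use $(\ref{formu})$. By \cite[Theorem 2]{W14} the Brauer category of $b$ coincides with that of the block $b_P$ of $N_G(P,\,b_P)$, and by $(\ref{formular of l(b)})$ the integer $l(b)$ depends only on this category together with its local blocks; taking $G=N_G(P,\,b_P)$, the block $b=b_P$ covers the nilpotent block $b_P$ of $C_G(P)\trianglelefteq G$ with quotient $\mathbb Z_3$ acting freely, so by \cite{KP} it is Morita equivalent to $\mathcal O[P\rtimes\mathbb Z_3]$ and thus $l(b)=3$. Now $(\ref{formu})$ gives $m(b,\,1)=l(b)-m(b,\,P)-\sum_{1\ne R<P}m(b,\,R)=3-1-0=2$, and the lemma follows. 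The step I expect to be the main obstacle is the identity $m(b,\,R)=m\big((b_R)^{N_G(R,\,b_R)},\,R\big)$ together with the identification of $N_G(R,\,b_R)/C_G(R)$ with a subgroup of the inertial quotient acting freely on $P$: this combines the general theory of lower defect groups with the control of the local Brauer categories coming from \cite[Corollary 3.6]{P01} and Lemma \ref{inherition}, and one must verify that the canonical action provided by \cite{KP} on the defect group of $b_R$ really is a restriction of the free $E$-action, so that the Külshammer--Puig model is indeed the single block $\mathcal O[P\rtimes\mathbb Z_3]$ with free action.
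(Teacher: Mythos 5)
Your treatment of the multiplicities at $R\neq 1$ is essentially sound and close in spirit to the paper's: the paper also reduces via \cite[Theorem 7.2]{O80}, the equality (10) of \cite{W14} and \cite[Theorem 5.12]{O80} to the local pair $(R,\,b_R)$, and then uses that $b_R$ is nilpotent (because the inertial quotient of order $3$ acts freely on $P\setminus\{1\}$, so its Cartan matrix is $(|P|)$ and $R<P$ cannot be a lower defect group), rather than your longer route through the K\"ulshammer--Puig model $\mathcal O[P\rtimes M]$. Two small cautions there: you assert the exact equality $m(b,\,R)=m\big((b_R)^{N_G(R,\,b_R)},\,R\big)$, whereas what the cited results give directly is a sum over Brauer correspondents/covered blocks; for the vanishing you need only the inequality, which is all the paper uses, so this is repairable but should not be stated as an identity without argument.

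The genuine gap is your derivation of $l(b)=3$, on which $m(b,\,1)=2$ entirely depends. The claim that ``by (\ref{formular of l(b)}) the integer $l(b)$ depends only on the Brauer category together with its local blocks,'' so that one may replace $G$ by $N_G(P,\,b_P)$ and apply \cite{KP}, is unjustified and in fact circular: the $T=1$ term of (\ref{formular of l(b)}) is $m(b,\,1)$ itself, and with your own computation of the other terms both (\ref{formu}) and (\ref{formular of l(b)}) reduce to the same identity $l(b)=m(b,\,1)+1$, so they carry no information about $l(b)$. Identifying $l(b)$ with $l$ of the Brauer correspondent of $b$ in $N_G(P,\,b_P)$ is precisely the hard statement (the $P=Q$ base case of Theorem \ref{MT}(i), equivalently Alperin's weight conjecture for these blocks), which is not a formal consequence of \cite[Theorem 2]{W14} (an isomorphism of Brauer categories only) nor of \cite{KP} (which needs a normal subgroup with a covered nilpotent block, unavailable for $b$ itself). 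The paper supplies this input externally, citing \cite{B71} and \cite{EKKS} for $l(b)=3$ when $P=Q=\mathbb Z_{2^n}\times\mathbb Z_{2^n}$; without such a citation (or an argument of comparable depth) your proof of $m(b,\,1)=2$ does not go through.
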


\begin{proof}
Clearly the block $b$ is not nilpotent.
By \cite{B71} and \cite{EKKS}, we have $l(b)=3$.
Since $m(b,\,P)=1$, in order to prove the lemma, by the equality (\ref{formu}),
it suffices to show $m(b, \,R)=0$ for any nontrivial proper subgroup $R$ of $P$.
In order to do that, by \cite[Theorem 7.2]{O80},
it suffices to show $m(c,\, R)=0$ for any block $c$ of $N_G(R)$ such that $c^G=b$.
Let $d$ be a block of $C_G(R)$ covered by $c$.
By the equality (10) in \cite{W14}, we have $m(c,\, R)=m\big(d^{N_G(R,\,d)},\, R\big)$. The pair $(d,\, R)$ is a $b$-Brauer pair and is conjugate to
a Brauer pair contained in $(P,\, b_P)$.
So in order to prove $m(c,\, R)=0$,
it suffices to show $m\big(b_R^{N_G(R,\, b_R)},\, R\big)=0$.
Suppose that $m\big(b_R^{N_G(R,\, b_R)},\, R\big)\geq1$.
By \cite[Theorem 5.12]{O80},
we have $m(b_R,\, R)\geq m\big(b_R^{N_G(R,\,b_R)},\,R\big)\geq 1$.
However, since $P=\mathbb Z_{2^n}\times \mathbb Z_{2^n}$,
the block $b_R$ of $C_G(R)$ is nilpotent,
it has only one simple module up to isomorphism, and thus $m(b_R,\, R)$ has to be 0. That causes a contradiction.
\end{proof}

We borrow the subgroup $E$ of $N_G(P, \,b_P)$ in the paragraph under
Proposition \ref{fusion system},and set $R=C_P(E)$.

\begin{lem}\label{lower defect group C_P(E)}
Keep the notation as above and assume that the block $b$ is controlled by $N_G(P)$. Then we have $m\big(b_R^{N_G(R,\, b_R)},\, R\big)=2$.
\end{lem}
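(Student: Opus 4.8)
The plan is to reduce the computation to an application of Lemma \ref{P=Q lower defect group}. Put $E'=E/C_G(P)$, a group of order $3$. By Lemma \ref{P=QC_P(E)}({\bf i}) we have $P=Q\rtimes R$ with $R=C_P(E)$ and $Q=[Q,\,E]$, and by Lemma \ref{Aut(Q)} the group $E'$ acts freely on $Q-\{1\}$. Since $b$ is controlled by $N_G(P)$, the argument in the proof of Lemma \ref{inertial subgroup of local subgroups}({\bf i}) identifies $\mathcal{F}_{(P,\,b_P)}(G,\,b)$ with the fusion system of $P\rtimes E'$ over $P$; this is the model in which the following fusion-theoretic computations will be carried out, and there $R=C_P(E')$ is centralized by $E'$, while $C_Q(R)$ is $E'$-stable.

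First I would pin down the relevant local subgroups. A short commutator computation inside $P=Q\rtimes R$, using $Q\cap R=1$ and that $Q$ is normal in $P$, gives $N_Q(R)=C_Q(R)$, and hence $N_P(R)=C_Q(R)\times R$ and $C_P(R)=C_Q(R)\times Z(R)$. Since $E'$ centralizes $R$ and normalizes $C_Q(R)$, one computes in $P\rtimes E'$ that $\mathrm{Aut}_{\mathcal F}(R)=N_{P\rtimes E'}(R)/C_{P\rtimes E'}(R)\cong R/Z(R)=\mathrm{Inn}(R)$, so that $\mathrm{Out}_{\mathcal F}(R)$ is trivial and therefore $N_G(R,\,b_R)=RC_G(R)$. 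Moreover, since $R$ is a $2$-group acting on the nontrivial abelian $2$-group $Q$, the fixed subgroup $C_Q(R)$ is nontrivial; as it is $E'$-stable and $E'$ cannot centralize it (otherwise $C_Q(R)\le C_Q(E)=1$), the order-three group $E'$ acts freely on $C_Q(R)-\{1\}$, and then the structure of $\mathrm{Aut}(Q)$ described in Lemma \ref{Aut(Q)} forces $C_Q(R)\cong\mathbb Z_{2^a}\times\mathbb Z_{2^a}$ for some $1\le a\le n$.

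Now I would carry out the reduction. Set $B=b_R^{N_G(R,\,b_R)}=b_R^{RC_G(R)}$, the unique block of $RC_G(R)$ covering $b_R$; its defect group is $N_P(R)=C_Q(R)\times R$. Since $R$ is normal in $RC_G(R)$ and $|RC_G(R):C_{RC_G(R)}(R)|=|R:Z(R)|$ is a power of $2$, Lemma \ref{ldg&qg} yields $m\big(b_R^{N_G(R,\,b_R)},\,R\big)=m(B,\,R)=m(\overline{B},\,1)$, where $\overline{B}$ is the block of $RC_G(R)/R\cong C_G(R)/Z(R)$ determined by $B$; chasing this isomorphism identifies $\overline{B}$ with the image $\overline{b_R}$ of $b_R$ under the quotient of $C_G(R)$ by its central $2$-subgroup $Z(R)$, so $\overline{b_R}$ has defect group $C_P(R)/Z(R)\cong C_Q(R)\cong\mathbb Z_{2^a}\times\mathbb Z_{2^a}$. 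By Lemma \ref{hyperfocal subgroup of local subgroup} applied with the trivial subgroup of $\mathrm{Aut}(R)$, the hyperfocal subgroup of the block $b_R$ of $C_G(R)$ is contained in $Q\cap C_P(R)=C_Q(R)$; on the other hand, since $E'$ acts freely on $C_Q(R)-\{1\}$ one has $[C_Q(R),\,E]=C_Q(R)$, which is contained in that hyperfocal subgroup. Hence the hyperfocal subgroup of $\overline{b_R}$ equals its defect group $C_Q(R)$, and $\overline{b_R}$ is not nilpotent; so $\overline{b_R}$ is a block to which Lemma \ref{P=Q lower defect group} applies, with ``$P=Q$'' equal to $\mathbb Z_{2^a}\times\mathbb Z_{2^a}$, and that lemma gives $m(\overline{b_R},\,1)=2$, completing the proof. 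The main obstacle is the fusion-theoretic bookkeeping of the second paragraph --- the identities $N_G(R,\,b_R)=RC_G(R)$, $C_P(R)=C_Q(R)\times Z(R)$ and $N_P(R)=C_Q(R)\times R$, together with the recognition that $\overline{b_R}$ is a non-nilpotent block whose abelian defect group $\mathbb Z_{2^a}\times\mathbb Z_{2^a}$ coincides with its hyperfocal subgroup; once these are in place, Lemmas \ref{ldg&qg} and \ref{P=Q lower defect group} do the rest.
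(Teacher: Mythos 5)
Your overall reduction is the same as the paper's: pass to a quotient block whose defect group coincides with its hyperfocal subgroup and is homocyclic of rank two, then invoke Lemmas \ref{ldg&qg} and \ref{P=Q lower defect group}. Your bookkeeping identities $N_P(R)=C_Q(R)\times R$, $C_P(R)=C_Q(R)\times Z(R)$, $N_G(R,\,b_R)=RC_G(R)$ and the freeness argument giving $C_Q(R)\cong\mathbb Z_{2^a}\times\mathbb Z_{2^a}$ are all correct (the paper obtains $G'=N_G(R,\,b_R)=RC_G(R)$ by a Frattini-type argument from the control hypothesis rather than by computing $\mathrm{Aut}_{\mathcal F}(R)=\mathrm{Inn}(R)$ in the model $P\rtimes E'$, but the two are equivalent). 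However, two steps are asserted where an argument is needed. First, that $N_P(R)$ is a defect group of $b_R^{RC_G(R)}$, and likewise that $C_P(R)$ is a defect group of the block $b_R$ of $C_G(R)$ (which you need both for the defect group of the quotient block and to apply Lemma \ref{hyperfocal subgroup of local subgroup} with $K=1$), amount to $(R,\,b_R)$ being fully $K$-normalized; this is not automatic for a subpair of a maximal Brauer pair, but follows from the control hypothesis exactly as in the first paragraph of the proof of Lemma \ref{inherition}, which you should cite.

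Second, and more seriously, the inclusion $[C_Q(R),\,E]\subseteq$ (hyperfocal subgroup of the block $b_R$ of $C_G(R)$) is stated with no justification, and it is the crux of your lower bound. That hyperfocal subgroup is generated by commutators $[U,\,x]$ with $x$ a $2'$-element of $N_{C_G(R)}(U,\,e_U)$, where $(U,\,e_U)$ runs over Brauer pairs of the block $b_R$ of $C_G(R)$ inside a chosen maximal pair; so you must show that a suitable $2'$-element of $E$ (note $E\supseteq C_G(P)$ is not a $2'$-group) stabilizes the relevant pair with first component $C_Q(R)$. This requires the compatibility between $b_R$-Brauer pairs in $C_G(R)$ and $b$-Brauer pairs containing $(R,\,b_R)$, i.e. the identification of the Brauer category of the block $b_R$ of $C_G(R)$ with $N_{\mathcal F}^1(R)$ provided by \cite[Corollary 3.6]{P01}, together with the adjustment to $2'$-elements as at the end of the proof of Lemma \ref{hyperfocal subgroup of local subgroup}. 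The paper sidesteps this point: by Lemma \ref{inherition} the block $b'=(b_R)^{N_G(R,\,b_R)}$ is controlled by the normalizer of its defect group $P'=N_P(R)$, with inertial $2$-complement of order $3$ coming from $E$ (proof of Lemma \ref{inertial subgroup of local subgroups}), so its hyperfocal subgroup $Q'$ is nontrivial, homocyclic by Lemmas \ref{hyperfocal subgroup of local subgroup} and \ref{Nilpotent}, and Lemma \ref{P=QC_P(E)} applied to $b'$ gives $P'=Q'\rtimes R$; comparing with $P'=C_Q(R)\times R$ forces $Q'=C_Q(R)$, and Lemma \ref{reduced to the trivial subgroup} transfers this to the quotient by $R$. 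With your lower-bound step either justified via \cite[Corollary 3.6]{P01} or replaced by this argument, your proof is correct and in substance the same as the paper's.
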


\begin{proof}
Set $G'=N_G(R, \,b_R)$, $b'=(b_R)^{G'}$, $P'=N_P(R)$ and $b'_{P'}=b_{N_P(R)}$.
By the first paragraph of the proof of Lemma \ref{inherition},
$(P',\, b'_{P'})$ is a maximal $b'$-Brauer pair.
Obviously $E$ is contained in $N_{G'}(P', \,b'_{P'})$ and
by the proof of Lemma \ref{inertial subgroup of local subgroups} {\bf (i)},
the inclusion induces an isomorphism from $E/C_G(P)$ to a $2$-complement of
$N_{G'}(P',\, b'_{P'})/C_{G'}(P')$.
On the other hand, by Lemma \ref{inherition},
the block $b'$ is controlled by the normalizer $N_{G'}(P')$.
Therefore the hyperfocal subgroup $Q'$ of the block $b'$ with respect to
$(P',\, b'_{P'})$ is not trivial.
By Lemma \ref{hyperfocal subgroup of local subgroup}, $Q'$ is contained in $Q$ and
thus it is equal to $\mathbb Z_{2^m}\times \mathbb Z_{2^l}$ for integers $m$ and $l$.
We claim $m=l$. Otherwise, by Lemma \ref{Nilpotent},
the block $b'$ is nilpotent and thus $Q'$ is equal to 1.
That is against $Q'$ being nontrivial.

Set $E'=E  C_{G'}(P')$.
The quotient group $E'/C_{G'}(P')$ is a $2$-complement of
$N_{G'}(P',\,b'_{P'})/C_{G'}(P')$.
Clearly $C_{P'}(E')$ is equal to $R$.
Since the block $b'$ is controlled by $N_{G'}(P')$, by Lemma \ref{P=QC_P(E)},
we have $Q'=[P', \,E']$ and $P'=Q'\rtimes R$.
So $Q'$ normalizes $R$ and it is contained in $C_Q(R)$.
Since $P=Q\rtimes R$, we have $P'=C_Q(R)\times R$.
Then the equalities $P'=C_Q(R)\times R=Q'\rtimes R$ and
the inclusion $Q'\subset C_Q(R)$ force $Q'=C_Q(R)$.

Since the block $b$ is controlled by $N_G(P)$,
we have $G'=C_G(R)\big(N_G(P,\,b_P)\cap G'\big)$.
Since $N_G(P,\,b_P)=PE$ and $E$ is contained in $G'$,
we have $G'=P'C_G(R)=RC_G(R)$.
Set $\bar G'=G'/R$ and let $\bar b'$ be the image of $b'$ in ${\cal O} \bar G'$.
Then $\bar b'$ is a block of $G'$ and by Lemma \ref{reduced to the trivial subgroup}, $Q'R/R$ is a hyperfocal subgroup of the block $\bar b'$, which is equal to $P'/R$.
By Lemmas \ref{ldg&qg} and \ref{P=Q lower defect group},
we have $m(b',\,R)=m(\bar{b'},\,1)=2$.
\end{proof}

\begin{lem}\label{wreathed product}
Assume that the block $b$ is not controlled by $N_G(P)$,
that $P$ is $\mathbb Z_{2^2}\wr \mathbb Z_2$ and
that $Q$ is $\mathbb Z_{2^2}\times \mathbb Z_{2^2}$.
Then we have $m(b,\,1)=1$.
\end{lem}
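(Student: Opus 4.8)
The plan is to reduce to the local subgroup $N_G(Q,\,b_Q)$, identify the block up to Morita equivalence with the group algebra of a group of order $96$, and then count the $2$-regular classes with trivial defect group.

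By \cite[Theorem 2]{W14} I would first assume $G=N_G(Q,\,b_Q)$, so that $Q$ and $Q_0=\mathbb{Z}_2\times\mathbb{Z}_2$ are normal in $G$ and $b=b_Q$. Since $|P|=|\mathbb{Z}_{2^2}\wr\mathbb{Z}_2|=32$, Proposition \ref{fusion system}(b) gives $|S|=16$; as $Q\le S=P_0=C_P(Q_0)$ and $|Q|=16$, this forces $S=P_0=Q$, hence $C_P(Q)=Q$, and by the proof of Lemma \ref{nil} also $C_G(Q_0)=P_0C_G(Q)=C_G(Q)$. Thus the block $b_Q=b_{Q_0}$ of $C_G(Q)$ is nilpotent with defect group $C_P(Q)=Q$ (Lemma \ref{nil}), and by the paragraph after Proposition \ref{fusion system} we have $G/C_G(Q)=N_G(S,\,b_S)/SC_G(S)\cong{\rm S}_3$, an element of order $3$ of which acts freely on $Q\setminus\{1\}$ (Lemma \ref{Aut(Q)}).

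Next I would invoke the Külshammer--Puig structure theorem for a block that extends a nilpotent block \cite{KP}: since $b_Q$ is a $G$-stable nilpotent block of the normal subgroup $C_G(Q)$ with defect group $Q$, the algebra $kGb$ is Morita equivalent to a twisted group algebra $k_\gamma[Q\rtimes(G/C_G(Q))]$, with $G/C_G(Q)\cong{\rm S}_3$ acting on $Q$ through the canonical embedding into $\mathrm{Aut}(Q)$ and $\gamma\in H^2({\rm S}_3,\,k^\times)$; as $H^2({\rm S}_3,\,k^\times)=0$ the class $\gamma$ is trivial, so $kGb$ is Morita equivalent to $k[Q\rtimes{\rm S}_3]$. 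This last algebra is a single block, since its only simple modules are the trivial module $k$ and the inflation $V$ of the $2$-dimensional simple $k{\rm S}_3$-module, and $V$ is a composition factor of the projective cover of $k$ because $J(kQ)/J(kQ)^2$ is the natural $2$-dimensional module for ${\rm S}_3\cong\mathrm{GL}_2(\mathbb{F}_2)$; in particular $l(b)=2$.

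By the description of lower defect groups recalled in the proof of Lemma \ref{ldg&qg}, $m(b,\,1)$ is the multiplicity of $1$ among the elementary divisors of the Cartan matrix of $b$, hence a Morita invariant, so $m(b,\,1)=m\big(k[Q\rtimes{\rm S}_3],\,1\big)$ equals the number of $2$-regular conjugacy classes of $Q\rtimes{\rm S}_3$ with trivial defect group. There are exactly two $2$-regular classes: the class $\{1\}$, whose defect group is the Sylow $2$-subgroup $P$; and the class of the elements $(q,\,s)$ with $s\in{\rm S}_3$ of order $3$ --- since $s-1$ is invertible on $Q$ one has $1+s+s^2=0$ on $Q$, so each such element has order $3$; $Q$ acts transitively by conjugation on $\{(q,\,s):q\in Q\}$ for each of the two order-$3$ elements $s$, and a transposition conjugates these two sets into one another, so all $32$ of them form one class, and its centralizer has order $3$, hence trivial defect group. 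Therefore exactly one of the two classes has trivial defect group and $m(b,\,1)=1$. The delicate points are the invocation of the Külshammer--Puig Morita equivalence together with the vanishing of $\gamma$ (so that $k[Q\rtimes{\rm S}_3]$ is the correct model), and the fact that the reduction through \cite[Theorem 2]{W14} leaves $m(b,\,1)$ unchanged; the block identification and the counting of classes are routine.
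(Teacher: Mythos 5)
The main gap is your opening reduction: you pass to $G=N_G(Q,\,b_Q)$ ``by \cite[Theorem 2]{W14}'' and then compute $m(b,\,1)$ there. But \cite[Theorem 2]{W14} only provides an isomorphism of Brauer categories between $b$ and its correspondent in $N_G(Q,\,b_Q)$; it does not identify Cartan matrices, their elementary divisors, or lower defect group multiplicities. Since $m(b,\,1)$ is the multiplicity of $1$ as an elementary divisor of the Cartan matrix of $b$, the invariance you need is exactly the kind of statement predicted by Rouquier's conjecture and proved (in special cases) by the main theorem of this paper; it is not available at this point, and this lemma is itself an ingredient of that proof (it feeds into Lemma \ref{lower defect group C_S(E_S)} and hence Lemma \ref{nctrlb}), so assuming the transfer would be circular. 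You yourself flag this as a ``delicate point'' but give no argument, and everything afterwards (the K\"ulshammer--Puig identification, the class count) happens in the reduced group, so nothing in the proposal controls $m(b,\,1)$ for the original block. The paper avoids any such reduction: it works in $G$ itself, proves that the block $b_X$ of $C_G(X)$ is nilpotent for $X$ the order-$2$ subgroup of $Z(P)$, and then invokes K\"ulshammer's analysis of $2$-blocks with wreathed defect groups \cite{K80} --- Lemma (5.A) (every elementary divisor of the Cartan matrix is $1$ or $|P|$) together with Lemma (3.D) and Proposition 14.E (which give $l(b)=2$) --- so that $m(b,\,1)=l(b)-m(b,\,P)=1$ by the equality (\ref{formu}).

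A secondary imprecision: even in the reduced situation, the K\"ulshammer--Puig theorem yields a Morita equivalence with a twisted group algebra of an extension $1\to Q\to L\to G/C_G(Q)\to 1$ that is not a priori split; you write $Q\rtimes \mathrm{S}_3$ without justifying splitness (this can be repaired using that a Sylow $2$-subgroup of $L$ is $\mathbb{Z}_{2^2}\wr\mathbb{Z}_2$, which splits over $Q$, plus restriction to Sylow subgroups in $H^2(\mathrm{S}_3,\,Q)$, together with the vanishing of the twist since $H^2(\mathrm{S}_3,\,k^\times)$ is trivial, as you note). Your count of the $2$-regular classes of $Q\rtimes\mathrm{S}_3$ and the conclusion that exactly one of them has trivial defect group are correct, but because of the reduction gap the argument as a whole does not establish the lemma for the original block $b$.
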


\begin{proof}
The center $Z(P)$ is cyclic.
Let $X$ be the unique subgroup of $Z(P)$ of order $2$.
Clearly $(P, \,b_P)$ is a maximal Brauer pair of the block $b_X$ of $C_G(X)$.
Denote by $Q_X$ the hyperfocal subgroup of the block $b_X$ of $C_G(X)$
with respect to the maximal $b_X$-Brauer pair $(P, \,b_P)$.
Obviously $(Q_X,\, b_{Q_X})$ is a $b_X$-Brauer pair.
We claim that the quotient group
$N_{C_G(X)}(Q_X,\,b_{Q_X})/C_G(Q_X)$ is a $2$-group.
By Lemma \ref{hyperfocal subgroup of local subgroup},
$Q_X$ is contained in $Q$,
so $Q_X$ is isomorphic to $\mathbb Z_{2^m}\times \mathbb Z_{2^l}$ for
some integers $m$ and $l$ such that $0\leq m, l \leq 2$.
Suppose that $m\neq l$ or $m=l=0$.
Then the claim is clear.
Suppose that $1\leq m= l \leq 2$.
Then $Q_X$ is $Q$ or $Q_0$.
By Lemma \ref{Aut(Q)}, the quotient group $N_{C_G(X)}(Q_X,\,b_{Q_X})/C_G(Q_X)$ has no $3$-element since any $3$-element of ${\rm Aut}(Q_X)$ acts transitively on
the three nontrivial elements of $Q_0$
while any element in $N_{C_G(X)}(Q_X,\,b_{Q_X})/C_G(Q_X)$ centralizes $X$.
So in this case $N_{C_G(X)}(Q_X,\,b_{Q_X})/C_G(Q_X)$ is a $2$-group too.
The claim is done.
Then by Lemma \ref{Nilpotent}, the block $b_X$ is nilpotent.
By \cite[Lemma (5.A)]{K80},
an elementary divisor of the Cartan matrix of the block $b$ is either $1$ or $|P|$.

Clearly $P$ is generated by elements $x,y,z$ with the relations:
$|x|=|y|=4,\, |z|=2,\, xy=yx,\, zxz=y$ and $Q$ is generated by $x,y$.
Let $Y$ be the subgroup of $P$ generated by elements $xy,x^2,z$.
Set $Q_1=\langle xy,x^2\rangle$, $Q_2=\langle xy,x^2z\rangle$ and
$Q_3=\langle xy,z\rangle$.
Since the block $b$ is controlled by $N_G(Q,\,b_Q)$,
for any $w\in N_G(Y,\,b_Y)$, ${Q_1}^w$ is contained in $Q$.
This shows that $Q_1$, $Q_2$ and $Q_3$ are not $N_G(Y,\,b_Y)$-conjugate.
By \cite[Lemma (3.D)]{K80}, the order of $N_G(Y,\,b_Y)/YC_G(Y)$ is $2$.
Since the block $b$ is not controlled by $N_G(P)$,
the order of $N_G(Q,\,b_Q)/C_G(Q)$ has to be $6$.
By \cite[Proposition 14.E]{K80}, we have $l(b)=2$.
Now by the equality (\ref{formu}), we have $m(b,\,1)=1$.
\end{proof}

Assume that the block $b$ is not controlled by $N_G(P)$.
By Proposition \ref{fusion system}, there exists a unique essential object $(S, b_S)$ in
$\mathcal{F}_{(G,\,b)}(P,\,b_P)$;
moreover, borrowing the subgroup $E_S$ of $N_G(S,\,b_S)$ in the paragraph above Lemma \ref{P=QC_P(E)}, we have $S=Q\rtimes C_S(E_S)$.
Set $T=C_S(E_S)$.

\begin{lem}\label{lower defect group C_S(E_S)}
Keep the notation and the assumption as above. Assume that $|Q|$ is less than $16$.
Then we have $m\big(b_T^{N_G(T,\,b_T)}, T\big)=1$.
\end{lem}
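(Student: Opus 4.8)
The plan rests on the observation that $|Q|=2^{2n}<16$ forces $n=1$, so $Q=Q_0\cong\mathbb{Z}_2\times\mathbb{Z}_2$. Then $S=C_P(Q_0)$ satisfies $Q\leq Z(S)$, and the decomposition $S=Q\rtimes C_S(E_S)$ of Lemma~\ref{P=QC_P(E)}({\bf ii}) becomes a direct product $S=Q\times T$ with $T=C_S(E_S)$; in particular $T\trianglelefteq S$. First I would upgrade this to $T\trianglelefteq P$. Since $|P:S|=2$ it suffices to show that one $g\in P\setminus S$ normalizes $T$; as $P\leq N_G(S,b_S)$, such a $g$ normalizes the normal subgroup $A\trianglelefteq N_G(S,b_S)$ with $A/SC_G(S)$ of order three, hence permutes its Hall $2'$-subgroups, which are $A$-conjugate. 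Writing $A=SE_S$ and $gE_Sg^{-1}=(se)E_S(se)^{-1}$ with $s\in S,\,e\in E_S$, one gets
\[
gTg^{-1}=C_S\!\big(gE_Sg^{-1}\big)=(se)\,C_S(E_S)\,(se)^{-1}=s\,T\,s^{-1}=T,
\]
using that $E_S$ centralizes $T$ and $T\trianglelefteq S$. The same computation shows that $N_G(S,b_S)$, being generated by $P$, $E_S$ and $C_G(S)$, normalizes $T$; hence $N_G(S,b_S)\leq N_G(T,b_T)$.

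Next I would reduce to the case $G=N_G(T,b_T)$. The number $m\big(b_T^{N_G(T,\,b_T)},\,T\big)$ equals $m(b',T)$ for $G':=N_G(T,b_T)$, $b':=(b_T)^{G'}$, and $(G',b')$ again satisfies the hypotheses of the lemma. Indeed, the Brauer category of $b'$ is the $T$-normalizer of $\mathcal{F}:=\mathcal{F}_{(P,\,b_P)}(G,\,b)$ by \cite[Corollary~3.6]{P01}; since $N_G(S,b_S)$ normalizes $T$ and fixes $b_T$, we have $N_{G'}(S,b_S)=N_G(S,b_S)$, so $N_{G'}(S,b_S)/SC_{G'}(S)\cong{\rm S}_3$ and $(S,b_S)$ is still essential for $b'$. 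Hence $b'$ is not nilpotent nor controlled by the normalizer of its defect group; its defect group (contained in $P$, since $(b')^G=b$) contains the essential subgroup $S$ properly, so equals $P$, and $(P,b_P)$ is a maximal $b'$-Brauer pair. By Lemma~\ref{hyperfocal subgroup of local subgroup} the hyperfocal subgroup $Q'$ of $b'$ with respect to $(P,b_P)$ lies in $Q$; it is nontrivial (else $b'$ is nilpotent) and noncyclic (else $b'$ is controlled by the normalizer of $P$, by \cite[Theorem~1]{W14}), so $Q'=Q$; by Proposition~\ref{fusion system}({\bf b}), $(S,b_S)$ is then the unique essential object for $b'$, and the conjugacy of the Hall $2'$-subgroups of $A$ shows that the subgroup ``$C_S(E_S)$'' attached to $b'$ is again $T$. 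So we may assume $G=N_G(T,b_T)$, i.e.\ $T\trianglelefteq G$ and $b_T$ is $G$-stable, $(b_T)^G=b$, and the assertion becomes $m(b,T)=1$.

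In this situation $|G:C_G(T)|$ is a $2$-power: a $2'$-element $\varphi$ of $\mathrm{Aut}_G(T)=G/C_G(T)$ is realized in $\mathcal{F}$, so $[\varphi,T]$ lies in the hyperfocal subgroup $Q$ of $b$; but $[\varphi,T]\subseteq T$, hence $[\varphi,T]\subseteq Q\cap T=1$ and $\varphi=\mathrm{id}$. Thus Lemma~\ref{ldg&qg} gives $m(b,T)=m(\overline{b},1)$, where $\overline{G}=G/T$ and $\overline{b}$ is the block of $\overline{G}$ determined by $b$, and by Lemma~\ref{reduced to the trivial subgroup}, $\overline{b}$ has defect group $\overline{P}=P/T$ and hyperfocal subgroup $QT/T\cong\mathbb{Z}_2\times\mathbb{Z}_2$. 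Since $S=Q\times T$, $|P:S|=2$, and an element of $P\setminus S$ induces on $Q$ a nontrivial (hence, by Lemma~\ref{Aut(Q)}, order-two) automorphism, the group $\overline{P}$ has the Klein four group $\overline{S}=S/T$ normal of index $2$ with a complement acting as a transposition, so $\overline{P}\cong\mathbb{Z}_2\wr\mathbb{Z}_2$; and $\overline{S}$ remains essential, so $\overline{b}$ is not controlled by $N_{\overline{G}}(\overline{P})$.

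Finally, $\overline{b}$ is precisely a block of the type treated in the case $n=1$ of Theorem~\ref{MT}({\bf ii}), settled in \cite{T}: so $l(\overline{b})=2$, while $m(\overline{b},\overline{P})=1$ and $m(\overline{b},\overline{R})=0$ for every nontrivial proper subgroup $\overline{R}$ of $\overline{P}$ (arguing as in the proof of Lemma~\ref{P=Q lower defect group} through \cite{O80}, or simply because the Cartan matrix of such a block has elementary divisors $1$ and $8$); so formula~(\ref{formu}) forces $m(\overline{b},1)=1$, and hence $m\big(b_T^{N_G(T,\,b_T)},\,T\big)=m(b,T)=1$. I expect the two genuinely delicate points to be (a) the passage from $T\trianglelefteq S$ to $T\trianglelefteq P$, together with the accompanying fact that every conjugation by $P$ or by $E_S$ stabilizes $T$; and (b) the consequent claim that $(S,b_S)$ stays essential for $b'$ — this is exactly what excludes the degenerate case $Q'=1$, which would make $\overline{b}$ nilpotent and $m(\overline{b},1)=0$ rather than $1$. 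Everything else is bookkeeping plus the input from \cite{T}.
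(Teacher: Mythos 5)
Your proposal is correct for the hypothesis as literally stated ($|Q|<16$ forces $n=1$), and on that case it runs broadly parallel to the paper's proof while simplifying the middle: both arguments pass to $G'=N_G(T,\,b_T)$ and $b'=(b_T)^{G'}$, rule out a cyclic hyperfocal for $b'$, factor out $T$ using Lemmas \ref{ldg&qg} and \ref{reduced to the trivial subgroup}, and finish by extracting $m(\bar b',\,1)=1$ from $l(\bar b')=2$ together with the Cartan elementary divisors $1$ and $|\bar P'|$. Your simplifications are genuine and legitimate when $n=1$: since $Q=Q_0\leq Z(S)$ you get $T\trianglelefteq S$, and the Schur--Zassenhaus conjugacy computation (the same one the paper uses in its third paragraph, but there only to conclude $N_P(T)\not\leq S$) upgrades this to $T\trianglelefteq N_G(S,\,b_S)\supseteq P$; hence the defect group of $b'$ is $P$ itself rather than $N_P(T)$, the hyperfocal of $b'$ is all of $Q$ (you get this from \cite[Theorem 1]{W14}, where the paper instead identifies the essential pair $(N_S(T),\,\cdot)$ of $b'$ and proves $Q'=C_Q(T)$ via Lemma \ref{P=QC_P(E)}), and $|G':C_{G'}(T)|$ being a $2$-power follows from your commutator-in-the-hyperfocal argument instead of the control equality $G'=N_P(T)C_G(T)$. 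Minor imprecisions: the ``Hall $2'$-subgroups of $A$'' should be $2$-complements of $A/C_G(S)$ (note $C_G(S)\leq E_S$ has even order), and ``$N_G(S,\,b_S)$ fixes $b_T$'' needs the one-line remark that $(T,\,b_T)\trianglelefteq(S,\,b_S)$ because $T\trianglelefteq S$. For the endgame, the safe justification is the one the paper gives, namely \cite[Lemma 9.3, Theorems 9.7 and 8.1(2)]{S14} after excluding the case where the hyperfocal equals $\bar P'$; your alternative of ``arguing as in Lemma \ref{P=Q lower defect group} through \cite{O80}'' is not routine for the subgroup $Z(\bar P')$.

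The substantive divergence is scope. The paper's proof of this lemma is written to cover $Q\cong\mathbb{Z}_4\times\mathbb{Z}_4$ as well (hence the claim $1\leq m\leq2$ and the alternative $(\bar Q',\,\bar P')=(\mathbb{Z}_{2^2}\times\mathbb{Z}_{2^2},\,\mathbb{Z}_{2^2}\wr\mathbb{Z}_2)$, disposed of by Lemma \ref{wreathed product} via \cite{K80}), consistently with the introduction's ``$n\leq2$'' and the ``at most $16$'' of the abstract and of the final Corollary; note also that the lemma's standing hypotheses do not include $Q\leq Z(S)$. If the intended bound is $|Q|\leq16$, your opening move fails for $n=2$: one only has $Q_0\leq Z(S)$, so $T$ need not be normal in $S$ or $P$, the defect group of $b'$ is only $N_P(T)$, the hyperfocal of $b'$ is only $C_Q(T)$, and the quotient block may have defect group $\mathbb{Z}_{2^2}\wr\mathbb{Z}_2$ — exactly the case requiring Lemma \ref{wreathed product}. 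So your argument proves the statement as printed, but not the stronger version the paper's own proof (and its later claims) is aimed at.
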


\begin{proof}
Set $G'=N_G(T,\,b_T)$ and $b'=(b_T)^{G'}$.
Since the block $b$ is controlled by $N_G(S,\,b_S)$,
there exists $x\in N_G(S,\,b_S)$
such that $N_{P^x}(T)$ is a defect group of the block $b'$ of $G'$.
Since $T=C_S(E_S)$, we may adjust the choice of $E_S$
by the $N_G(S,\,b_S)$-conjugation,
so that $N_P(T)$ is a defect group of the block $b'$.
Then the pair $\big(N_P(T),\,b_{N_P(T)}\big)$ is a maximal $b'$-Brauer pair.
Set $P'=N_P(T)$ and $b'_{P'}=b_{N_P(T)}$.
Denote by $Q'$ the hyperfocal subgroup of the block $b'$ with respect to
the maximal $b'$-Brauer pair $(P',\,b'_{P'})$.
By Lemma \ref{hyperfocal subgroup of local subgroup}, $Q'$ is contained in $Q$.

The subgroup $E_S$ acts transitively on $Q_0-\{1\}$, $E_S\cap C_G(Q_0)=C_G(S)$, and the $G$-conjugation induces an isomorphism from $E_SC_G(Q_0)/C_G(Q_0)$ to the Sylow $3$-subgroup of
$\mathrm{Aut}(Q_0)$ (see the second paragraph above Lemma \ref{P=QC_P(E)}).
By Proposition \ref{fusion system},
we have $S=C_P(Q_0)$.
Thus $Q_0$ is contained in $P'$.
Clearly, we have $(TQ_0,\,b'_{TQ_0})=(TQ_0,\,b_{TQ_0})$.
Since $E_S$ is contained in $ N_G(S,\,b_S)$
and the block $b$ is controlled by $N_G(S,\,b_S)$,
$E_S\leq N_{G'}(TQ_0,\,b'_{TQ_0})\leq N_{G'}(Q_0,\,b'_{Q_0})$.
So we have $Q_0=[Q_0,\,E_S]\leq Q'$.
Now we claim that $Q'$ is $\mathbb Z_{2^m}\times \mathbb Z_{2^m}$ for
some $1\leq m\leq 2$.
Otherwise, by Lemma \ref{Nilpotent}, the block $b'$ is nilpotent and
thus $Q'$ has to be 1.
This contradicts with the inclusion $Q_0\subset Q'$.

Recall that $N_G(S,\,b_S)$ has a subgroup $A$ containing $SC_G(S)$
such that $A/SC_G(S)=O_3\big(N_G(S,\,b_S)/SC_G(S)\big)$
and
$E_S/C_G(S)$ is a $2$-complement of $A/C_G(S)$.
Clearly $P$ is contained in $N_G(A)$ and
for any $u\in P-S$, $(E_S)^u/C_G(S)$ is a $2$-complement of $A/C_G(S)$ too.
By the Schur-Zassenhaus theorem,
there exists $v\in S$ such that $(E_S)^u/C_G(S)=(E_S)^v/C_G(S)$.
This implies that $uv^{-1}\in P'$ and that $P'$ is not contained in $S$.
Since $S=C_P(Q_0)$, $Q_0$ is not contained in $Z(P')$.
By Proposition \ref{fusion system},
$\mathcal{F}_{(P',\, b'_{P'})}(G',\,b')$ has a unique essential object $(S',\,b'_{S'})$. Obviously $N_S(T)$ is a subgroup of $P'$ centralizing $Q_0$ and
since $Q_0$ is not contained in $Z(P')$, $N_S(T)$ is a proper subgroup of $P'$.
Since $|P:S|=2$, $|P':N_S(T)|$ is less than $2$ and
thus $N_S(T)$ is forced to be equal to $S'$.
By Lemma \ref{P=QC_P(E)}, we have $S'=C_Q(T)\times T$.

Clearly $(S',\,b'_{S'})=(S',\,b_{S'})$, $E_S\subset N_{G'}(S', b'_{S'})$ and
$C_G(S)\subset C_G(S')\subset N_{G'}(S',\,b'_{S'})$.
Since the group $E_S/C_G(S)$ is of order $3$ and $C_G(S)\subset E_S\cap C_G(S')$, the group $E_SC_G(S')/C_G(S')$ has to be of order $1$ or $3$.
If $E_SC_G(S')/C_G(S')$ is of order $1$,
then we have $E_S\subset C_G(S')\subset C_{G'}(Q')$.
But this is impossible.
So the group $E_SC_G(S')/C_G(S')$ has to be of order $3$ and
the inclusion $E_S\subset N_{G'}(S',\,b'_{S'})$ induces an injective group homomorphism $E_SC_G(S')/C_G(S')\rightarrow N_{G'}(S',\,b'_{S'})/S'C_{G'}(S')$.
Since $N_{G'}(S',\,b'_{S'})/S'C_{G'}(S')$ is isomorphic to ${\rm S}_3$,
$E_SC_G(S')/C_G(S')$ is a $2$-complement of the converse image of
$O_3\big(N_{G'}(S',\,b'_{S'})/S'C_{G'}(S')\big)$ in $N_{G'}(S',\,b'_{S'})/C_{G'}(S')$.
Take $E_{S'}=E_SC_G(S')$. We have $C_{S'}(E_{S'})=T$.
Since $Q'\leq Q$, $Q'$ is contained in $C_Q(T)$.
By Lemma \ref{P=QC_P(E)}, $Q'$ has to be equal to $C_Q(T)$.

Since the block $b$ is controlled by $N_G(S)$, we have
$G'=C_G(T)\big(N_G(S,\,b_S)\cap G'\big)=C_G(T)\big(PE_S\cap G'\big)=P'C_G(T)$.
Let $\bar b'$ be the image of $b'$ through the obvious surjective homomorphism
${\cal O} G'\rightarrow {\cal O}  G'/T$.
Then $\bar b'$ is a block of $G'/T$ and
we have $m(b', \,T)=m(\bar{b'},\, 1)$ by Lemma \ref{ldg&qg}.
By Lemma \ref{reduced to the trivial subgroup},
$P'/T$ is a defect group of $\bar b'$
and $Q'T/T=S'/T$ is a hyperfocal subgroup of $\bar b'$.
Set $\bar P'=P'/T$ and $\bar Q'=Q'T/T$.
Then $\bar Q'$ is $\mathbb Z_{2^m}\times \mathbb Z_{2^m}$
and the index of $\bar Q'$ in $\bar P'$ is $2$ since the index of $S'$ in $P'$ is $2$.
By \cite[Proposition 2.7]{S16}, we have
either $\bar Q'=\mathbb Z_2\times \mathbb Z_2$ and $\bar P'=D_8$
or $\bar Q'=\mathbb Z_{2^2}\times \mathbb Z_{2^2}$ and
$\bar P'= \mathbb Z_{2^2}\wr \mathbb Z_2$.
In the former case, we exclude the case (aa) of \cite[Lemma 9.3]{S14}
since in that case the hyperfocal subgroup of the corresponding fusion system over
$\bar P'$ is $\bar P'$ itself;
then by \cite[Theorem 9.7]{S14}, $l(\bar b')$ is $2$;
by \cite[Theorem 8.1 (2)]{S14},
the Cartan matrix $C'$ of the block $\bar b'$ is
$$M^{\mathrm{T}}
\left(
 \begin{array}{cc}
            3 & 2 \\
            2 & 4 \\
  \end{array}
    \right)M$$
for some
$M\in\mathrm{GL}(2,\,\mathbb{Z})$,
which means the elementary divisors of $C'$ are $1$ and $8$;
since $m(\bar b',\, \bar P')=1$, by the equality (\ref{formu}),                                                                                                                          we have $m(\bar b',\,1)=1$.
In the latter case, we use Lemma \ref{wreathed product} to get $m(\bar b',\,1)=1$.
\end{proof}

\begin{lem}\label{vertex}
Assume that there is a simple ${\cal O}Gb$-module $M$ with a vertex $Q$.
Then we have
$$Q=\left\{
  \begin{array}{ll}
    P & \mbox{\rm if the block $b$ is controlled by $N_G(P)$ and $Q$ is contained in $Z(P)$; } \\
    S & \mbox{\rm if the block $b$ is not controlled by $N_G(P)$ and $Q$ is contained in $Z(S)$. } \\
    \end{array}\right.
$$
\end{lem}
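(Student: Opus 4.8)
The plan is to invoke Kn\"orr's theorem on the vertices of irreducible modules, which asserts that $M$ has a vertex $V$ lying in a defect group $D$ of $b$ with $Z(D)\le V$ and $C_D(V)\le V$. Since all defect groups of $b$ are $G$-conjugate and the vertices of $M$ form a single $G$-conjugacy class, after replacing $(V,\,D)$ by a $G$-conjugate we may take $D=P$; thus $M$ has a vertex $V\le P$ with $Z(P)\le V$ and $C_P(V)\le V$. As $V$ is $G$-conjugate to $Q$, it is abelian, isomorphic to $\mathbb Z_{2^n}\times\mathbb Z_{2^n}$, and $|V|=|Q|$; in particular $V=C_P(V)$.

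The crucial point is that $V=Q$. The hyperfocal subgroup $Q$ is strongly closed in $\mathcal F=\mathcal F_{(P,\,b_P)}(G,\,b)$ (see \cite{P00}), so it admits no $\mathcal F$-conjugate, and hence no $G$-conjugate contained in $P$, other than itself; as $V\le P$ is $G$-conjugate to $Q$, this forces $V=Q$. (In case {\bf (i)} one may argue more directly: $Q\le Z(P)\le V$ together with $|Q|=|V|$ already give $V=Q$.) Consequently $C_P(Q)=C_P(V)\le V=Q$. Now for {\bf (i)} the hypothesis $Q\le Z(P)$ gives $C_P(Q)=P$, so $P\le Q$ and therefore $Q=P$. (Alternatively, by Lemma~\ref{P=QC_P(E)}(i) and the centrality of $Q$ we have $P=Q\rtimes C_P(E)=Q\times C_P(E)$, so $Z(P)=Q\times Z(C_P(E))$; then $Z(P)\le V$ and $|V|=|Q|$ force $Z(C_P(E))=1$, whence $C_P(E)=1$ and $P=Q$.)

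For {\bf (ii)}, Proposition~\ref{fusion system} gives $S=C_P(Q_0)$ with $|P:S|=2$, and we assume $Q\le Z(S)$. Since $Q_0\le Q$ we have $C_P(Q)\le C_P(Q_0)=S$, while $Q\le Z(S)$ yields $S\le C_G(Q)$, i.e.\ $S\le C_P(Q)$; hence $C_P(Q)=S$. Combining this with the inclusion $C_P(Q)\le Q$ obtained above, we get $S\le Q$, and since $Q\le Z(S)\le S$ we conclude $Q=S$.

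The main obstacle is the identification $V=Q$ of the Kn\"orr vertex with the hyperfocal subgroup; it is here that the strong $\mathcal F$-closedness of $Q$ is used (together with the standard fact that the Brauer category reflects all $G$-fusion among subgroups of $P$, so that an $\mathcal F$-invariant subgroup of $P$ has no further $G$-conjugates inside $P$). Once $V=Q$ is in hand, the two cases are dispatched by the elementary centraliser computations $C_P(Q)=P$ and $C_P(Q)=S$ supplied respectively by the hypothesis $Q\le Z(P)$ and by Proposition~\ref{fusion system}.
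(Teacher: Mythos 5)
Your case \textbf{(i)} is fine and is in substance the same argument as the paper's: Kn\"orr's theorem is exactly what \cite[Corollary 41.7]{T95} encodes (the paper phrases it as the existence of a selfcentralizing $b$-Brauer pair $(Q,\,e)$ and then applies \cite[Proposition 41.3]{T95} to get $C_P(Q)=Z(Q)=Q$), and your direct deduction $Q\le Z(P)\le V$, $|Q|=|V|$, hence $V=Q$ and $P=C_P(Q)\le Q$, is correct.

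The gap is in case \textbf{(ii)}, precisely at the identification $V=Q$. Strong $\mathcal F$-closedness of the hyperfocal subgroup only controls $\mathcal F$-conjugation, and the parenthetical ``standard fact'' you invoke --- that the Brauer category reflects all $G$-fusion among subgroups of $P$, so that an $\mathcal F$-invariant subgroup of $P$ has no further $G$-conjugates inside $P$ --- is false for block fusion systems: one only has $\mathcal{F}_{(P,\,b_P)}(G,\,b)\subseteq \mathcal{F}_P(G)$, and $G$-conjugacy of subgroups of $P$ is in general strictly richer than what the Brauer category realizes (already for a nilpotent block every subgroup normal in $P$ is strongly closed in $\mathcal F$, yet $G$ may fuse it to other subgroups of $P$). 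The Kn\"orr vertex $V\le P$ is only $G$-conjugate to $Q$, by an element that need not be compatible with any Brauer pairs, so nothing forces $V=Q$. Your own reduction does handle the subcase $V\le S$ (then $Q\le Z(S)$ gives $Q\le C_P(V)=V$, so $V=Q$ and $S=C_P(Q)\le Q$), but the subcase $V\not\le S$ is not excluded by your argument, and without $V=Q$ the conclusion $Q=S$ does not follow. The paper avoids this issue entirely: \cite[Corollary 41.7]{T95} attaches a selfcentralizing $b$-Brauer pair whose first component is the \emph{given} vertex $Q$ itself (no transfer of the Kn\"orr property along an unknown $G$-conjugation is needed), and then \cite[Proposition 41.3]{T95}, applied to a maximal Brauer pair above $(Q,\,e)$, yields $C_P(Q)=Z(Q)=Q$; combined with $S\le C_P(Q)$ coming from $Q\le Z(S)$ this gives $S=Q$. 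To repair your write-up, replace the strong-closure step by this Brauer-pair version of Kn\"orr's theorem (or otherwise rule out the possibility $V\not\le S$), rather than appealing to $G$-fusion being captured by $\mathcal F$.
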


\begin{proof}
Since the simple ${\cal O}G b$-module $M$ has vertex $Q$,
by \cite[Corollary 41.7]{T95} there is a selfcentralizing $b$-Brauer pair $(Q,\,e)$.
Assume that the block $b$ is controlled by $N_G(P)$ and $Q$ is contained in $Z(P)$. Then $P$ is contained in $C_G(Q)$ and there is a maximal $b$-Brauer pair $(P, \,g)$ such that $(Q,\,e)$ is contained in $(P,\, g)$.
By \cite[Proposition 41.3]{T95}, we have $P=C_P(Q)=Q$.
If the block $b$ is not controlled by $N_G(P)$ and $Q$ is contained in $Z(S)$,
similarly we have $S=Q$.
\end{proof}

\begin{prop}\label{m(b,1)}
Assume that $Q$ is strictly contained in $P$.
Furthermore, assume that $Q$ is contained in $Z(P)$ if
the block $b$ is controlled by $N_G(P)$,
and that $Q$ is contained in $Z(S)$ if the block $b$ is not controlled by $N_G(P)$.
Then either any Cartan integer of $b$ is divisible by $2$ and then $m(b,\,1)$ vanishes,
or $Q$ is equal to $S$.
\end{prop}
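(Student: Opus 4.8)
The statement is in effect a dichotomy between ``$m(b,1)=0$'' and ``$Q=S$'': the elementary divisors of the Cartan matrix $C_b$ of $b$ are powers of $2$, the number of them equal to $1$ is $m(b,1)$, so $m(b,1)=\operatorname{rank}_{\mathbb F_2}C_b$, which vanishes precisely when $2$ divides every Cartan integer of $b$. My plan is therefore to assume that $Q$ is strictly contained in $P$ and, in the non-controlled case, that $Q\ne S$, and to prove that $C_b\equiv 0\pmod 2$; note that this property, being equivalent to $m(b,1)=0$, is a Morita invariant (more precisely it is preserved by any equivalence preserving $C_b$ up to $\mathbb Z$-equivalence). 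In the controlled case $S$ is not defined, so only the first alternative can occur and it is this that must be proved.

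First I would use \cite[Theorem 2]{W14} to replace $(G,b)$ by $(N_G(Q,b_Q),b_Q)$, so that one may assume $Q\trianglelefteq G$, $b=b_Q$, $C_G(Q)\trianglelefteq G$, and the hyperfocal subgroup is still $Q$. Since $b$ is not nilpotent while $b_Q$ as a block of $C_G(Q)$ is nilpotent (\cite[Proposition 4.2]{P00}), Lemma \ref{Nilpotent} shows that $\bar G:=G/C_G(Q)$ is not a $2$-group; as $\bar G$ embeds into $\operatorname{Aut}(Q)$ with Sylow $2$-subgroup $PC_G(Q)/C_G(Q)$ of order $|P:C_P(Q)|$, which is $1$ in the controlled case ($C_P(Q)=P$) and $2$ in the non-controlled case ($C_P(Q)=S$), Lemma \ref{Aut(Q)} forces $|\bar G|=3$, resp.\ $|\bar G|=6$. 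Next I would invoke the Külshammer--Puig theorem on extensions of the nilpotent block $b_Q$ of $C_G(Q)\trianglelefteq G$: it gives that $\mathcal OGb$ is basically Morita equivalent to a block $e$, of defect group $P$, of a twisted group algebra $\mathcal O_{\hat\alpha}\hat L$, where $\hat L$ fits into $1\to D\to\hat L\to\bar G\to1$ with $D=C_P(Q)$ the defect group of $b_Q$, and $\hat\alpha$ is inflated from $H^2(\bar G,k^\times)$. Since $\bar G$ is cyclic of order $3$ or a group of order $6$, it has trivial Schur multiplier over $k$, so $\hat\alpha=1$ and it suffices to treat the honest block $\mathcal O\hat Le$.

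It would then remain to show that every Cartan integer of $\mathcal O\hat Le$ is even. Let $\langle\sigma\rangle=O_{2'}(\bar G)\cong\mathbb Z_3$ and let $N'$ be its preimage in $\hat L$ (a normal subgroup of index $1$ or $2$); by Schur--Zassenhaus $N'=D\rtimes\langle\sigma\rangle$ for a suitable lift $\sigma$, and after adjusting $\sigma$ by an element of $D$ I may assume $\hat L$ normalizes $\langle\sigma\rangle$. Because $b$ is non-nilpotent, $\sigma$ acts on $Q$ as an automorphism of order $3$, hence (Lemma \ref{Aut(Q)}) with $C_Q(\sigma)=1$; since $Q\le Z(D)$ by hypothesis, coprime action yields $D=Q\times R$ with $R:=C_D(\sigma)$, and $R\ne1$ because $|R|=|D:Q|\ge2$ — this is exactly where the hypotheses ``$Q\subsetneq P$'' resp.\ ``$Q\ne S$'' enter. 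Thus $R$ is a direct factor of $N'=(Q\rtimes\langle\sigma\rangle)\times R$, so $R\trianglelefteq N'\le C_{\hat L}(R)$, and $\hat L$ normalizes $R=C_D(\langle\sigma\rangle)$; hence $R\trianglelefteq\hat L$ with $|\hat L:C_{\hat L}(R)|$ a power of $2$. The computation underlying Lemma \ref{ldg&qg} (namely \cite[Theorem 5.8.11]{NT89}) then shows $C_{\mathcal O\hat Le}=|R|\cdot C_{\bar e}$ for the corresponding block $\bar e$ of $\mathcal O(\hat L/R)$ — in the controlled case, still more simply, $\mathcal O\hat L=\mathcal O(Q\rtimes\langle\sigma\rangle)\otimes\mathcal OR$ is a single block — so $|R|\ge2$ divides every entry of $C_{\mathcal O\hat Le}$, hence of $C_b$. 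Therefore every Cartan integer of $b$ is even and $m(b,1)=0$, which is the first alternative.

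I expect the main obstacle to be the Külshammer--Puig reduction together with the bookkeeping of which block $e$ of $\mathcal O_{\hat\alpha}\hat L$ corresponds to $b$ and why it has defect group $P$. Once $\hat L$ and the normal subgroup $N'=(Q\rtimes\langle\sigma\rangle)\times R$ are in place the divisibility is formal, the essential structural input being only that centrality of $Q$ in $D$ together with Lemma \ref{Aut(Q)} forces the nontrivial complement $R$ to $Q$ in $D$ to be normalized by the entire inertial extension $\hat L$.
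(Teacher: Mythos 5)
Your reduction in the first step is where the argument breaks down. The quantity you need to control, namely $m(b,1)$, equivalently the evenness of all Cartan integers of $b$, is (as you yourself note) an invariant of the Cartan matrix up to $\mathbb{Z}$-equivalence — but \cite[Theorem 2]{W14} does not provide any equivalence of that kind. It only gives an isomorphism of Brauer categories between $\mathcal{F}_{(P,\,b_P)}(G,\,b)$ and $\mathcal{F}_{(P,\,b_P)}\big(N_G(Q,\,b_Q),\,b_Q\big)$, i.e.\ a statement about fusion, and Cartan matrices and lower defect group multiplicities are not fusion-system invariants. Passing from $(G,\,b)$ to $(N_G(Q,\,b_Q),\,b_Q)$ while preserving $C_b$ up to $\mathbb{Z}$-equivalence is essentially what Rouquier's conjecture (quoted in the introduction) would give when $Q$ is abelian; if it were available, the inductive computations of $l(b)$ and of the multiplicities $m(b,\,R)$ that occupy Sections 3 and 4 would be largely unnecessary. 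So the very first move of your proof assumes a transfer of the desired numerical information that nothing in the cited results supplies, and without it the Külshammer--Puig analysis of $\hat L$ (which genuinely needs a normal subgroup on which the block restricts nilpotently, hence needs $Q\trianglelefteq G$) never gets off the ground. The downstream structural analysis (the decomposition $D=Q\times R$ with $R=C_D(\sigma)$, normality of $R$, and the divisibility $C=|R|\,C_{\bar e}$ via \cite[Theorem 5.8.11]{NT89}) is fine in itself, but it is carried out in the wrong group.

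For comparison, the paper's proof avoids any reduction: it splits according to whether some simple $\mathcal{O}Gb$-module is relatively $Q$-projective. If none is, the proof of \cite[Theorem 4]{W14} — an argument carried out directly in $G$ — yields that every Cartan integer is even and $m(b,\,1)=0$; if one is, then by \cite[Corollary 41.7]{T95} that simple module has vertex $Q$, and Lemma \ref{vertex} (using that $Q\le Z(P)$, resp.\ $Q\le Z(S)$, forces a selfcentralizing Brauer pair on $Q$ to satisfy $Q=P$, resp.\ $Q=S$) gives $Q=S$ in the non-controlled case and a contradiction with $Q\subsetneq P$ in the controlled case. If you want to salvage your approach, you would need either a genuine (basic) Morita or derived equivalence between $\mathcal{O}Gb$ and $\mathcal{O}N_G(Q,\,b_Q)b_Q$, or an argument, in the spirit of \cite[Theorem 4]{W14}, that bounds the $2$-adic valuation of the Cartan integers of $b$ itself rather than of a local correspondent.
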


\begin{proof}
Assume that no simple ${\cal O} Gb$-module is relatively projective to $Q$.
By the proof of \cite[Theorem 4]{W14}, any Cartan integer of $b$ is divisible by $2$ and $m(b,\,1)$ vanishes.
Assume that ${\cal O} Gb$ has a simple module relatively projective to $Q$.
By \cite[Corollary 41.7]{T95}, $Q$ has to be a vertex of the simple ${\cal O}Gb$-module. Since $Q$ is strictly contained in $P$, by Lemma \ref{vertex}, $Q$ has to be $S$.
\end{proof}

%-----------------------------------------------------------------------
\section{Proof of Theorem \ref{MT}}

\bigskip\quad\, In this section, we borrow the subgroups $E$ and $E_S$ in the paragraphs above Lemma \ref{P=QC_P(E)}.

\medskip\begin{lem}\label{ctrlb}
Assume that the block $b$ is controlled by $N_G(P)$ and $Q$ is contained in $Z(P)$.
We have $l(b)=3$.
\end{lem}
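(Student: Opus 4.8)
The plan is to compute $l(b)$ via the lower-defect-group formula (\ref{formu}), showing that the only nonzero contributions come from $R$ conjugate to $C_P(E)$ and from $R = 1$. First I would invoke Lemma \ref{inertial subgroup of local subgroups}: the block $b_R$ of $RC_G(R)$ is non-nilpotent precisely when a $2$-complement $E/C_G(P)$ (of order $3$, by the discussion after Proposition \ref{fusion system}) centralizes $R$ and is visible as a $2$-complement of $N_{RC_G(R)}(RC_P(R), b_{RC_P(R)})/C_{RC_G(R)}(RC_P(R))$. Since $E/C_G(P)$ has order $3$ and, by Lemma \ref{Aut(Q)} together with Lemma \ref{P=QC_P(E)}(i), acts freely on $Q - \{1\}$ with $Q = [Q,E]$, the elements of $P$ fixed by $E$ are exactly $C_P(E) = R_0$, and $P = Q \rtimes R_0$. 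Thus $b_R$ is non-nilpotent only when (up to conjugacy) $R \subseteq R_0$, and I expect to narrow this further to $R = R_0$ itself.

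Next I would run the argument of Lemma \ref{P=Q lower defect group}, adapted to the present situation. By (\ref{formu}) it suffices to show $m(b, R) = 0$ for every $p$-subgroup $R$ of $G$ not conjugate to $R_0$ and not equal to $1$. As in that proof, by \cite[Theorem 7.2]{O80} and the equality (10) of \cite{W14} this reduces to showing $m\big(b_R^{N_G(R, b_R)}, R\big) = 0$ for the $b$-Brauer pairs $(R, b_R) \subseteq (P, b_P)$; and by \cite[Theorem 5.12]{O80} this follows once $m(b_R, R) = 0$, i.e. once the block $b_R$ of $C_G(R)$ is nilpotent (a nilpotent block has a single simple module, so its lower-defect multiplicity at a proper subgroup of its defect group vanishes). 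By Lemma \ref{inertial subgroup of local subgroups}(ii), $b_R$ is non-nilpotent only if $E$ centralizes $R$, which by the freeness above forces $R \subseteq R_0$; and among subgroups $R \subseteq R_0$ one checks $C_P(R) = P \not\ni$ a full cyclic factor of $Q$ unless $R = R_0$, so $b_R$ is nilpotent for all nontrivial $R$ not conjugate to $R_0$. This disposes of every term except $R = 1$ and $R = R_0$.

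Finally I would pin down the two surviving multiplicities. For $R = R_0 = C_P(E)$, Lemma \ref{lower defect group C_P(E)} gives directly $m\big(b_{R_0}^{N_G(R_0, b_{R_0})}, R_0\big) = 2$; transporting through the reductions of the previous paragraph (in reverse) this is the contribution $m(b, R_0)$ to (\ref{formu}) — strictly, one uses (\ref{formular of l(b)}) rather than (\ref{formu}) to read off exactly this local multiplicity, which is cleaner. For $R = 1$: since $Q \le Z(P)$ and $Q < P$ is impossible is \emph{not} assumed, I split into the case $Q = P$ (handled by Lemma \ref{P=Q lower defect group}, giving $m(b,1) = 2$ and $l(b) = 3$ outright) and the case $Q < P$, where Proposition \ref{m(b,1)} applies: either every Cartan integer of $b$ is even and $m(b,1) = 0$, or $Q = S$ — but the block is controlled by $N_G(P)$, so there is no essential object and no such $S$, leaving $m(b,1) = 0$. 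Summing over (\ref{formular of l(b)}): in the $Q = P$ case $l(b) = 3$; in the $Q < P$ case the local terms are $m\big((b_T)^{N_G(T, b_T)}, T\big)$ for $T$ ranging over representatives of the $b$-Brauer pairs, and the same nilpotency analysis shows all such terms vanish except $T = P$ (contributing $1$) and $T = R_0$ (contributing $2$), giving $l(b) = 1 + 2 = 3$. The main obstacle I anticipate is the bookkeeping in the $Q < P$ sub-case — verifying that no \emph{other} conjugacy class of Brauer pairs $(T, b_T)$ beyond $P$ and $R_0 = C_P(E)$ carries a non-nilpotent local block, which is where Lemmas \ref{inherition} and \ref{inertial subgroup of local subgroups} do the real work.
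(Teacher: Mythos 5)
Your overall strategy (reduce via the local formula (\ref{formular of l(b)}) to identifying which Brauer pairs $(T,\,b_T)$ carry nonzero multiplicity, with $C_P(E)$ contributing $2$ by Lemma \ref{lower defect group C_P(E)} and $P$ contributing $1$) is the right frame, but the step that does all the work is wrong. You claim that for every nontrivial $R$ not conjugate to $R_0=C_P(E)$ the block $b_R$ of $C_G(R)$ is nilpotent, and that nilpotency kills the term. Both halves fail. First, by your own use of Lemma \ref{inertial subgroup of local subgroups}(ii), $E$ centralizes \emph{every} subgroup $R\leq C_P(E)$, so for $1\neq R\lneqq C_P(E)$ the block $b_R$ is in general \emph{not} nilpotent (its inertial quotient still sees $E$); the sentence ``one checks $C_P(R)=P\not\ni$ a full cyclic factor of $Q$ unless $R=R_0$'' does not establish anything, and these intermediate subgroups are precisely the hard cases. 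Second, even where $b_R$ \emph{is} nilpotent, the reduction via \cite[Theorem 5.12]{O80} only gives $m\big((b_R)^{N_G(R,b_R)},R\big)\leq m(b_R,R)$, and $m(b_R,R)$ need not vanish: a nilpotent block has multiplicity $1$ at its defect group, so the argument of Lemma \ref{P=Q lower defect group} kills the term only when $R$ is \emph{proper} in $C_P(R)$ — automatic there because $P=Q$ is abelian, but false here for selfcentralizing $R<P$. Moreover, nilpotency of $b_R$ over $C_G(R)$ does not control the block $(b_R)^{N_G(R,b_R)}$ of the full normalizer, which is the block actually entering (\ref{formular of l(b)}).

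The paper's proof closes exactly these gaps by arguing by induction on $|G|$: for a proper $R$ with nonzero local multiplicity one sets $N=N_G(R,b_R)$, $d=(b_R)^N$; if $N<G$ the induction hypothesis gives $l(d)=3$ and then Lemma \ref{lower defect group C_P(E)} applied to $d$ forces $R=C_P(E)$; if $N=G$ one splits according to whether $b_R$ is nilpotent, using the inertial-extension theorem of \cite{Z16'} (to get $l(b)=3$ directly and then identify $R$) in the nilpotent case, and in the non-nilpotent case passing to $\bar G=G/R$ and combining Lemmas \ref{ldg&qg} and \ref{reduced to the trivial subgroup} with Proposition \ref{m(b,1)} to force $QR=P$, hence $R=C_P(E)$. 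Your proposal contains no induction, no use of \cite{Z16'}, and no quotient argument, so the cases $1\neq R\lneqq C_P(E)$ (and more generally any $R$ with $E$ in its local inertial quotient, or selfcentralizing $R$) are not handled; as written the proof does not go through, although your treatment of the two endpoints $R=1$ (via Proposition \ref{m(b,1)}) and $R=C_P(E)$ (via Lemma \ref{lower defect group C_P(E)}) matches the paper.
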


\begin{proof}
We prove this by induction on $|G|$.
The lemma is known by \cite{B71} and \cite{EKKS} when $P=Q$.
So we assume that $Q$ is strictly contained in $P$.
Let $R$ be a proper subgroup of $P$ such that
 $m\big((b_R)^{N_G(R,\,b_R)}, R\big)\neq0$
Set $N=N_G(R,\,b_R)$, $d=(b_R)^N$ and $\hat{R}=N_P(R)$.
By Lemma \ref{inherition} and its proof,
the pair $(\hat{R},\,d_{\hat{R}})=(\hat{R},\,b_{\hat{R}})$ is a maximal $d$-Brauer pair and the block $d$ is controlled by $N_N(\hat{R},\,d_{\hat{R}})$.
Since $m(d,\,R)\neq 0$ and $R<P$, the block $d$ is not nilpotent.
So the hyperfocal subgroup $Q_d$ of the block $d$
with respect to  $(\hat{R},\,d_{\hat{R}})$ is not trivial.
By Lemma \ref{hyperfocal subgroup of local subgroup}, $Q_d$ is contained in $Q$; moreover, $Q_d$ has to be isomorphic to $\mathbb Z_{2^m}\times \mathbb Z_{2^m}$ for some $1\leq m\leq n$.
Otherwise, by Lemma \ref{Nilpotent}
the block $d$ is nilpotent which causes a contradiction.

Assume that $N<G$.
By induction we have $l(d)=3$.
By Lemma \ref{inertial subgroup of local subgroups},
the quotient group $N_G(P, \,b_P)/C_G(P)$ has a suitable $2$-complement
$E/C_G(P)$ such that $N_E(R)/C_G(P)$ is isomorphic to a $2$-complement of the quotient group $N_N(\hat R,\,d_{\hat R})/C_N(\hat R)$.
By Lemma \ref{lower defect group C_P(E)}
and  the equality (\ref{formular of l(b)}),
a proper subgroup $T$ of $\hat R$ such that $m(d,\,T)\neq 0$ has to be conjugate to $C_{\hat R}\big(N_E(R)\big)$.
So $R$ is equal to $C_{\hat R}(N_E(R))$.
Clearly both the quotient groups $N_E(R)/C_G(P)$ and $E/C_G(P)$ have order $3$ and $E$ is equal to $N_E(R)$.
Now we have $R=\hat{R}\cap C_P(E)=N_{C_P(E)}(R)$.
That forces $R=C_P(E)$.

Assume that $N=G$ and then $b=d$.
Since the block $b$ is controlled by $N_G(P,\,b_P)$, we have
$$G=C_G(R)\big(N_G(P,\,b_P)\cap N_G(R)\big)=C_G(R)N_G(P,\,b_P).$$

Assume that the block $b_R$ is nilpotent.
Then the block $(b_R)^{PC_G(R)}$ is also nilpotent,
$N_G(P,\,b_P)\cap PC_G(R)=PC_G(P)$,
and the quotient group $G/PC_G(R)$ is a $3$-group.
By the main theorem of \cite{Z16'}, the block $b$ is inertial (see \cite{Z16'}).
In particular we have $l(b)=3$.
Then by Lemma \ref{lower defect group C_P(E)}
and the equality (\ref{formular of l(b)}),
$R$ has to be equal to $C_P(E)$ for a suitable $2$-complement $E/C_G(P)$ of
the quotient group $N_G(P,\,b_P)/C_G(P)$.

Assume that the block $b_R$ is not nilpotent.
By Lemma \ref{inertial subgroup of local subgroups},
$R\leq C_P(E)$ for some $2$-complement $E/C_G(P)$ of the quotient group $N_G(P,\,b_P)/C_G(P)$.
So $G$ is equal $PC_G(R)$.
Set $\bar G=G/R$.
Let $\bar{b}$ be the image of $b$ under the surjective homomorphism
${\cal O} G\rightarrow {\cal O} \bar G$.
Then $\bar b$ is a block of $\bar G$,
$\bar{P}=P/R$ is a defect group of the block $\bar{b}$
and $(QR)/R\cong Q$ is a hyperfocal subgroup of $\bar{b}$
(see Lemma \ref{reduced to the trivial subgroup}).
By Lemma \ref{ldg&qg}, we have $m(\bar{b},\,1)=m(b,\,R)\neq0$.
Since $Q$ is contained in $Z(P)$,
by Proposition \ref{fusion system} {\bf{(a)}}
the block $\bar{b}$ is controlled by $N_{\bar{G}}(\bar{P})$.
By Proposition \ref{m(b,1)} applied to $\bar{G}$ and $\bar{b}$,
$(QR)/R$ has to be equal to $P/R$.
Now by Lemma \ref{P=QC_P(E)}, we have
$R=C_P(E)$.

In conclusion, we always have $R=C_P(E)$
for a suitable $2$-complement $E/C_G(P)$ of
the quotient group $N_G(P, \,b_P)/C_G(P)$.
Now the lemma follows from Lemma \ref{lower defect group C_P(E)}
and the equality (\ref{formular of l(b)}).
\end{proof}

\begin{lem}\label{Tctrlb}
Assume that the block $b$ is not controlled by $N_G(P)$,
that $Q$ is contained in the center of $S$,
that the $b$-Brauer pair $(T,\,b_T)$ is fully normalized in
$\mathcal{F}_{(P,\,b_P)}(G,\,b)$,
that $m\big((b_T)^{N_G(T,\,b_T)},T\big)\neq 0$ and
that $T$ is strictly contained in $P$.
Then the block $(b_T)^{N_G(T,\,b_T)}$ is not controlled by
$N_{N_G(T,\,b_T)}\big(N_P(T)\big)$.
\end{lem}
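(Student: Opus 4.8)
The plan is to argue by contradiction: suppose the block $d:=(b_T)^{N_G(T,\,b_T)}$ is controlled by $N_{N_G(T,\,b_T)}\big(N_P(T)\big)$ and derive a contradiction with the block $b$ being not controlled by $N_G(P)$. Write $N=N_G(T,\,b_T)$ and $\hat T=N_P(T)$. Since $(T,\,b_T)$ is fully normalized, by \cite[Corollary 3.6]{P01} the pair $(\hat T,\,b_{\hat T})$ is a maximal $d$-Brauer pair and the Brauer category $\mathcal{F}_{(\hat T,\,b_{\hat T})}(N,\,d)$ is the $\mathcal{F}$-normalizer $N_{\mathcal{F}}(T)$ of $T$, where $\mathcal{F}=\mathcal{F}_{(P,\,b_P)}(G,\,b)$. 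First I would show that $d$ is not nilpotent: since $m(d,\,T)\neq 0$ and $T$ is a proper subgroup of $P$, the block $d$ cannot have a single simple module, so it is not nilpotent; hence its hyperfocal subgroup $Q_d$ with respect to $(\hat T,\,b_{\hat T})$ is nontrivial. By Lemma \ref{hyperfocal subgroup of local subgroup}, $Q_d\subseteq Q$, and by Lemma \ref{Nilpotent} applied inside $N$ the quotient $N_N(Q_d,\,d_{Q_d})/C_N(Q_d)$ is not a $2$-group; combined with Lemma \ref{Aut(Q)} this forces $Q_d$ to be of the form $\mathbb Z_{2^m}\times\mathbb Z_{2^m}$ for some $m\ge 1$, so in particular $Q_0\subseteq Q_d$ (a $3$-element of $N_N(Q_d)/C_N(Q_d)$ acts transitively on the involutions of $Q_0$, forcing $Q_0=[Q_0,\,O^2(\cdots)]\subseteq Q_d$).

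Next I would locate $Q_0$ inside $\hat T$ and exploit the hypothesis $Q\leq Z(S)$ together with $S=C_P(Q_0)$, $|P:S|=2$ from Proposition \ref{fusion system}. The key point is that $\hat T=N_P(T)$ is not contained in $S$: otherwise $\hat T$ would centralize $Q_0$, and then, since $d$ is controlled by $N_N(\hat T)$ by assumption, the whole fusion of $Q_0$ inside $\mathcal{F}_{(\hat T,\,b_{\hat T})}(N,\,d)$ would be realized inside $N_N(\hat T,\,b_{\hat T})$; but $N_N(\hat T,\,b_{\hat T})$ centralizes $Q_0$ (as $Q_0\leq Z(\hat T)\leq Z(S)$ and elements of $N$ normalize $T$, hence conjugation on $Q_0$ is controlled by $\mathrm{Aut}(T)$-data that fixes $Q_0$ pointwise), so no nontrivial $3$-element could act on $Q_0$ inside $N_{\mathcal F}(T)$ — contradicting that $Q_0\subseteq Q_d$ is in the hyperfocal subgroup of a non-nilpotent block. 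Therefore $\hat T\not\subseteq S$, so $|\hat T:(\hat T\cap S)|=2$ and $Q_0\not\subseteq Z(\hat T)$. At this stage I would apply Proposition \ref{fusion system} to the block $d$ over its defect group $\hat T$: since $d$ is not controlled by $N_N(\hat T)$ would be what we want, but we are assuming the opposite, so Proposition \ref{fusion system}(a) applies and gives $Q_0\leq Z(\hat T)$ — directly contradicting $Q_0\not\subseteq Z(\hat T)$.

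The main obstacle will be the bridging step in the middle: making precise that the fusion action on $Q_0$ visible inside the local category $\mathcal{F}_{(\hat T,\,b_{\hat T})}(N,\,d)$ agrees with (a piece of) the fusion action on $Q_0$ inside $\mathcal{F}_{(P,\,b_P)}(G,\,b)$, and in particular that a $3$-element witnessing $Q_0\subseteq Q_d$ cannot be absorbed into $N_N(\hat T,\,b_{\hat T})$ unless it already acts nontrivially on $Q_0$ at the level of the big category. Concretely I expect to invoke, for a subgroup $V$ with $Q_0\leq V\leq \hat T$ and a $2'$-element $x\in N_N(V,\,d_V)$, the description of $N_{\mathcal F}^K(R)$-morphisms from Lemma \ref{hyperfocal subgroup of local subgroup}: there is a $2'$-element $y\in N_G(VT,\,b_{VT})=N_G(V,\,b_V)$ inducing the same automorphism on $V$, hence on $Q_0$; combined with the control assumption on $d$, $y$ may be taken in $N_G(P,\,b_P)$, so $y$ acts nontrivially on $Q_0$ inside $N_G(P,\,b_P)$. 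But $Q_0\leq Z(S)$ and $N_G(P,\,b_P)\subseteq N_G(S,\,b_S)$, and $N_G(S,\,b_S)/SC_G(S)\cong\mathrm{S}_3$ has its $3$-part not in the image of $N_G(P,\,b_P)$ precisely because the block $b$ is not controlled by $N_G(P)$ (essential object, Proposition \ref{fusion system}(b)); so $y\in N_G(P,\,b_P)$ must centralize $Q_0$, a contradiction. Once this is nailed down, the previous two paragraphs assemble into the contradiction and the lemma follows.
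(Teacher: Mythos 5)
Your opening paragraph is fine and matches the paper: $m(d,T)\neq 0$ with $T<\hat T$ rules out nilpotency of $d=(b_T)^N$, Lemma \ref{hyperfocal subgroup of local subgroup} gives $Q_d\leq Q$, and Lemma \ref{Nilpotent} forces $Q_d\cong\mathbb Z_{2^m}\times\mathbb Z_{2^m}$ with $m\geq 1$, so $Q_0\leq Q_d$. The gap is in your "key point" that $\hat T\not\subseteq S$. You base it on the claim that $N_N(\hat T,\,b_{\hat T})$ centralizes $Q_0$, justified by "conjugation on $Q_0$ is controlled by $\mathrm{Aut}(T)$-data" and, in your bridging paragraph, by the assertion that the lifted $2'$-element $y$ "may be taken in $N_G(P,\,b_P)$". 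Neither holds. First, $Q_0$ need not lie in $T$ at all — in the paper's analysis of exactly this situation one gets $T\cap Q=1$ — so the action of $N_N(\hat T,\,b_{\hat T})$ on $Q_0$ is in no way determined by its action on $T$. Second, the block $b$ is controlled by $N_G(S,\,b_S)$, not by $N_G(P,\,b_P)$ (indeed here $N_G(P,\,b_P)=PC_G(P)$), so the control available to you only lets you replace $y$, modulo centralizers, by an element of $N_G(S,\,b_S)$; and order-$3$ elements of $N_G(S,\,b_S)$ genuinely act nontrivially on $Q_0\leq Z(S)$ (that is what $E_S$ does), so no contradiction results. Note also that under your contradiction hypothesis the $2$-complement of $N_N(\hat T,\,b_{\hat T})/C_N(\hat T)$ acts freely on $Q_d\setminus\{1\}$ (Lemma \ref{P=QC_P(E)}(i) applied to $d$), so the statement you are trying to deduce from formal control properties is precisely the hard content of the lemma, not a consequence of them.

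For comparison, the paper goes the other way around and needs genuinely quantitative input: assuming control of $d$, Proposition \ref{fusion system}(a) gives $Q_0\leq Z(\hat T)$, hence $\hat T\leq S=C_P(Q_0)$ (you have this step, just in the reversed logical order); then Lemma \ref{ctrlb} gives $l(d)=3$, and the lower-defect-group results (Lemmas \ref{P=QC_P(E)} and \ref{lower defect group C_P(E)}) combined with $m(d,T)\neq0$, $T\neq\hat T$ pin down $\hat T=Q_d\times T$ and $T\cap Q=1$; next, analyzing $K=N_G(S,\,b_S)\cap N_N(\hat T,\,b_{\hat T})$ produces a suitable $E_S$ with $T=C_S(E_S)$; finally the Schur--Zassenhaus argument from the third paragraph of the proof of Lemma \ref{lower defect group C_S(E_S)} shows $N_P(T)\not\subseteq S$, contradicting $\hat T\subseteq S$. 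Your proposal replaces this entire chain by a fusion-theoretic shortcut that does not close, so as it stands the proof is incomplete.
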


\begin{proof}
Set $N=N_G(T,\,b_T)$, $d=(b_T)^N$ and $\hat{T}=N_P(T)$.
Since $(T,\,b_T)$ is fully normalized in $\mathcal{F}_{(P,\,b_P)}(G,\,b)$,
the pair $(\hat{T},\,d_{\hat{T}})=(\hat{T},\,b_{\hat{T}})$ is a maximal $d$-Brauer pair. Assume that $\mathcal{F}_{(\hat{T},\, d_{\hat{T}})}(N,\,d)=
\mathcal{F}_{(\hat{T},\, d_{\hat{T}})}
\big(N_N(\hat{T},\,d_{\hat{T}}),\,d_{\hat{T}}\big).$

By Lemma \ref{hyperfocal subgroup of local subgroup},
the hyperfocal subgroup $Q_d$ of the block $d$ with respect to
the maximal $d$-Brauer pair $(\hat{T},\,d_{\hat{T}})$ is contained in $Q$.
Since $\mathcal{F}_{(\hat{T},\,d_{\hat{T}})}(N,\,d)=
\mathcal{F}_{(\hat{T},\,d_{\hat{T}})}\big(N_N(Q_d,\,d_{Q_d}),\,d_{Q_d}\big)$,
$Q_d$ is isomorphic to
$\mathbb Z_{2^m}\times \mathbb Z_{2^m}$ for some $1\leq m\leq n$;
otherwise, $N_N(Q_d,\,d_{Q_d})=\hat TC_N(Q_d)$ and the block $d$ is nilpotent; since $T$ is strictly contained in $P$, $m\big(d,\,T\big)$ has to be zero.

By Proposition \ref{fusion system},
we have $Q_0\leq Z(\hat{T})$, thus $T\leq \hat{T}\leq S$.
Since we are assuming that $Q$ is contained in the center of $S$,
$Q_d$ is contained in the center of $Z(\hat T)$.
By Lemma \ref{ctrlb}, we have $l(d)=3$.
Since $m(d,\,T)\neq 0$ and $T\neq \hat T$,
by Lemmas \ref{P=QC_P(E)} and \ref{lower defect group C_P(E)}
we have the equality $\hat{T}=Q_d\times T$, which implies $T\cap Q=1$.

We claim that there is a suitable $E_S$ such that $T=C_S(E_S)$.
Set $K=N_G(S,\,b_S)\cap N_N(\hat{T},\,b_{\hat{T}})$.
Since $T\leq \hat{T}\leq S$, $C_G(S)$ is normal in $K$.
Suppose that $K/C_G(S)$ is a $2$-group.
Since $PC_G(S)/C_G(S)$ is a Sylow  $2$-subgroup of $N_G(S,\,b_S)/C_G(S)$, there exists $x\in N_G(S,\,b_S)$ such that $K$ is contained in $P^xC_G(S)$.
By Proposition \ref{fusion system}, the block $b$ is controlled by $N_G(S,\,b_S)$.
So we have
$$N_N(\hat{T},\,b_{\hat{T}})=
N_G(T,\,b_T)\cap N_G(\hat{T},\,b_{\hat{T}})=
C_G(\hat{T})K=\hat{T}C_G(\hat{T}),$$
the block $d$ must be nilpotent and $m(d,\,T)=0$.
That contradicts with the inequality $m(d,\,T)\neq 0$.
Therefore, the quotient group $K/C_G(S)$ is not a $2$-group.

Take an element $y$ of $K$ such that
the coset $yC_G(S)$ in $K/C_G(S)$ has order $3$.
By the third paragraph above Lemma \ref{P=QC_P(E)},
we have $y\in E_S$ for some $E_S$.
Notice that $S=Q\rtimes C_{S}({E_S})$.
Suppose that there exists $t=ut^\prime\in T$ for some $u\in Q-\{1\}$ and
some $t^\prime\in C_{S}({E_S})$.
Then $[t,\,y]=[ut^\prime,\,y]=[u,\,y]$ is not equal to $1$ and
it is contained in $T\cap Q$.
That contradicts with the equality $T\cap Q=1$.
Hence $T\leq C_S({E_S}).$
Since $\hat T=Q_d\times T$ and $\hat T\leq S$,
we have $N_{C_S({E_S})}(T)=N_{C_S({E_S})}(T)\cap\hat{T}=T$ and
then $T=C_S({E_S})$.
The claim is done.

Using the third paragraph of the proof of
Lemma \ref{lower defect group C_S(E_S)},
we can prove $\hat{T}$ is not contained in $S$.
That contradicts with the inclusion $\hat T\subset S$.
\end{proof}

\begin{lem}\label{nctrlb}
Assume that the block $b$ is not controlled by $N_G(P)$,
that $Q$ is contained in $Z(S)$ and that $|Q|$ is less than $16$.
We have $l(b)=2$.
\end{lem}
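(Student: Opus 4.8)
The plan is to imitate the proof of Lemma~\ref{ctrlb}: argue by induction on $|G|$ and evaluate the right hand side of the lower defect group formula~(\ref{formular of l(b)}),
$$l(b)=\sum_{T\in\mathcal{T}}m\big((b_T)^{N_G(T,\,b_T)},\,T\big).$$
The term indexed by $T=P$ contributes $1$, since $P$ is a defect group of the block $(b_P)^{N_G(P,\,b_P)}$; the term indexed by $T=C_S(E_S)$ contributes $1$ by Lemma~\ref{lower defect group C_S(E_S)} (this is where the hypothesis $|Q|<16$ enters); and $C_S(E_S)$, being a proper subgroup of $S$, is not $G$-conjugate to $P$. So it suffices to prove that every proper subgroup $R$ of $P$ with $m\big((b_R)^{N_G(R,\,b_R)},\,R\big)\neq 0$ is $G$-conjugate to $C_S(E_S)$; then the sum collapses to $1+1=2$.

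Fix such an $R$ and, after replacing $(R,\,b_R)$ by a $G$-conjugate, assume it is fully normalized in $\mathcal{F}_{(P,\,b_P)}(G,\,b)$. Put $N=N_G(R,\,b_R)$, $d=(b_R)^N$ and $\hat R=N_P(R)$; then $(\hat R,\,d_{\hat R})=(\hat R,\,b_{\hat R})$ is a maximal $d$-Brauer pair and $R$ is a proper subgroup of the $2$-group $\hat R$. Since $m(d,\,R)\neq 0$ and $R\subsetneq\hat R$, the block $d$ is not nilpotent, so by Lemmas~\ref{hyperfocal subgroup of local subgroup} and~\ref{Nilpotent} its hyperfocal subgroup $Q_d$ is $\mathbb{Z}_{2^m}\times\mathbb{Z}_{2^m}$ for some $1\leq m\leq n$ with $Q_d\leq Q$, in particular $|Q_d|<16$. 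By Lemma~\ref{Tctrlb} the block $d$ is not controlled by $N_N(\hat R,\,d_{\hat R})$, so by Proposition~\ref{fusion system} the category $\mathcal{F}_{(\hat R,\,d_{\hat R})}(N,\,d)$ has a unique essential object $(S_d,\,d_{S_d})$ with $S_d=C_{\hat R}\big((Q_d)_0\big)$ and $|\hat R:S_d|=2$. Since $Q_d\leq Q$ and $m\geq 1$, the $2$-torsion $(Q_d)_0$ of $Q_d$ equals $Q_0$, so $S_d=\hat R\cap C_P(Q_0)=\hat R\cap S\leq S$ and, because $Q_d\leq Q\leq Z(S)$, also $Q_d\leq Z(S_d)$. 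Thus the block $d$ of $N$, together with $(S_d,\,d_{S_d})$ and the corresponding subgroup $C_{S_d}(E_{S_d})$, again satisfies all the hypotheses of the present lemma.

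If $N<G$, the induction hypothesis gives $l(d)=2$; applying the equality~(\ref{formular of l(b)}) to $(N,\,d)$ together with Lemma~\ref{lower defect group C_S(E_S)} (and the lower defect multiplicity inequalities of \cite{O80} as in the proof of Lemma~\ref{P=Q lower defect group}) shows that the only proper subgroup of $\hat R$ carrying a nonzero lower defect multiplicity for the local blocks of $(N,\,d)$ is, up to $N$-conjugacy, $C_{S_d}(E_{S_d})$; hence $R$ is $N$-conjugate to $C_{S_d}(E_{S_d})$, and the computation in the third paragraph of the proof of Lemma~\ref{lower defect group C_S(E_S)} (using $S=C_P(Q_0)$, $|P:S|=2$, and the freeness of the action of a $3$-element on $Q_0$ from Lemma~\ref{Aut(Q)}) identifies this with a $G$-conjugate of $C_S(E_S)$. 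If $N=G$, then $R\trianglelefteq G$, $d=b$, and one argues as in the last two paragraphs of the proof of Lemma~\ref{ctrlb}, mutatis mutandis: when the block $b_R$ of $RC_G(R)$ is not nilpotent one obtains $G=PC_G(R)$, passes to $\bar G=G/R$ (so $|G:C_G(R)|$ is a $2$-power, $\bar b$ has defect group $P/R$ and hyperfocal subgroup $QR/R\cong Q$, and $m(\bar b,\,1)=m(b,\,R)\neq 0$ by Lemma~\ref{ldg&qg}), and then Proposition~\ref{m(b,1)}---whose hypotheses one checks descend to $\bar b$, using that essential subgroups are selfcentralizing so that $Z(\bar P)$ lies in the essential subgroup---forces $\bar b$ to be non-controlled with $QR/R$ its unique essential subgroup, whence Lemmas~\ref{P=QC_P(E)} and~\ref{reduced to the trivial subgroup} give $R\cap Q=1$, $S=QR$ and $R$ conjugate to $C_S(E_S)$; the sub-case $b_R$ nilpotent is handled by the same reduction modulo $R$ (together with the wreath-product analysis of Lemma~\ref{wreathed product} and, for $n=1$, the known $\mathbb{Z}_2\times\mathbb{Z}_2$ situation) in place of the inertial-block input used in Lemma~\ref{ctrlb}.

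The main obstacle is the combinatorial bookkeeping forcing all proper subgroups $R$ with a nonzero contribution into the single $G$-conjugacy class of $C_S(E_S)$. In the case $N<G$ one must transport the local essential object $(S_d,\,d_{S_d})$ and its order-$3$ inertial quotient back into $\mathcal{F}_{(P,\,b_P)}(G,\,b)$ and verify that $C_{S_d}(E_{S_d})$ and $C_S(E_S)$ are conjugate; in the case $N=G$ one must make the reduction modulo $R$ precise enough to bring Proposition~\ref{m(b,1)}, Lemma~\ref{wreathed product} and Lemma~\ref{reduced to the trivial subgroup} to bear, and this $N=G$ case is the technically delicate heart of the argument. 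A secondary point, already disposed of above, is that the standing hypotheses $Q\leq Z(S)$ and $|Q|<16$ descend to the local block $d$, which is what allows the induction to close.
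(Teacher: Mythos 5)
Your overall strategy is the paper's: induction on $|G|$, the formula (\ref{formular of l(b)}), the contribution $1$ from $P$ and $1$ from $C_S(E_S)$ via Lemma \ref{lower defect group C_S(E_S)}, and the reduction to showing every proper subgroup with nonzero multiplicity is $G$-conjugate to $C_S(E_S)$. Your case $N<G$ also matches the paper (Lemma \ref{Tctrlb}, induction giving $l(d)=2$, identification of $R$ with the local subgroup $C_{S_d}(E_{S_d})$); the paper carries the transport back to $C_S(E_S)$ by first deducing $S_d=Q_d\times T$, hence $T\cap Q=1$, and then reusing the fourth and fifth paragraphs of the proof of Lemma \ref{Tctrlb}, rather than the paragraph of Lemma \ref{lower defect group C_S(E_S)} you cite; that part of your sketch is under-justified but repairable along the paper's lines.

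The genuine gap is the case $N=G$, which you propose to settle by running the last two paragraphs of the proof of Lemma \ref{ctrlb} ``mutatis mutandis''. Both pillars of that endgame are unavailable here: Lemma \ref{inertial subgroup of local subgroups} and the inertial-block theorem of \cite{Z16'} are proved under the hypothesis that $b$ is controlled by $N_G(P)$, and since $N_G(S,\,b_S)/SC_G(S)\cong \mathrm{S}_3$ has even order, $G/PC_G(R)$ need not be a $3$-group, so neither transfers. Concretely, your step ``$b_R$ of $RC_G(R)$ not nilpotent $\Rightarrow G=PC_G(R)$'' has no justification in this setting, and your use of Lemma \ref{reduced to the trivial subgroup} to \emph{deduce} $R\cap Q=1$ is circular: that lemma, Proposition \ref{m(b,1)} and the reduction modulo $R$ all require $|G:C_G(R)|$ to be a $2$-power, which here is obtained only after one knows $E_S\leq C_G(R)$, i.e. $R\leq C_S(E_S)$, i.e. essentially $R\cap Q=1$. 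This is exactly where the paper inserts a dedicated argument that your sketch is missing: if $T\cap Q\neq 1$, then $(b_T)^{TC_G(T)}$ is nilpotent (a $3$-element would act trivially on the central subgroup $T\cap Q_0$ inside the hyperfocal subgroup, against Lemma \ref{Aut(Q)}); then $C_P(T)\leq T$ by \cite[Theorem 5.12]{O80}, $G/SC_G(T)\cong \mathrm{S}_3$, the intermediate normal subgroup $H$ satisfies $l\big((b_T)^H\big)=3$ by the theory of extensions of nilpotent blocks \cite{KP}, so $l(b)\in\{2,3\}$, and in either case the Cartan relation $C=2C_H$ together with (\ref{formular of l(b)}) and Lemma \ref{lower defect group C_S(E_S)} forces $T=C_S(E_S)$, contradicting $T\cap Q\neq 1$; only after this does the reduction modulo $T$ and Proposition \ref{m(b,1)} give $QT=S$ and $T=C_S(E_S)$. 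Your proposed substitute for the nilpotent sub-case (reduction modulo $R$ plus Lemma \ref{wreathed product}) cannot fill this hole: Lemma \ref{wreathed product} merely computes $m(b,\,1)$ for a wreathed defect group and does not identify $R$ with $C_S(E_S)$, and the reduction itself is blocked for the reason just given.
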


\begin{proof}
We prove the lemma by induction on $|G|$.
Let $\mathcal{T}$ be a set of representatives for the $N_G(S,\,b_S)$-conjugacy classes of subgroups of $P$.
By Proposition \ref{fusion system},
$\{(T,\,b_T)\mid T\in\mathcal{T}\}$ is a set of representatives for the $G$-conjugacy classes of $b$-Brauer pairs.
For any $T\in\mathcal{T}$,
we further assume that $(T,\,b_T)$ is fully normalized in the Brauer category
${\cal F}_{(P,\,b_P)}(G,\,b)$.
Let $T<P$ and $T\in\mathcal{T}$ such that
$m\big((b_T)^{N_G(T,\,b_T)},\,T\big)\neq 0$.
Set $N=N_G(T,\,b_T)$, $d=(b_T)^N$ and $\hat{T}=N_P(T)$.
The pair $(\hat{T},\,d_{\hat{T}})=(\hat{T},\,b_{\hat{T}})$ is a maximal $d$-Brauer pair.
By Lemma \ref{hyperfocal subgroup of local subgroup},
the hyperfocal subgroup $Q_d$ of $d$ with respect to
the maximal $d$-Brauer pair $(\hat{T},\,d_{\hat{T}})$ is contained in $Q$.
Since $\mathcal{F}_{(\hat{T},\,d_{\hat{T}})}(N,\,d)=
\mathcal{F}_{(\hat{T},\,d_{\hat{T}})}\big(N_N(Q_d,\,d_{Q_d}),\,d_{Q_d}\big)$,
$Q_d$ is isomorphic to
$\mathbb Z_{2^m}\times \mathbb Z_{2^m}$ for some $1\leq m\leq n$;
otherwise, $N_N(Q_d,\,d_{Q_d})=TC_N(Q_d)$,
the block $d$ is nilpotent and $m(d,\,T)$ has to be zero.

Assume that $N<G$.
By Lemma \ref{Tctrlb} the block $d$ of $N$ is not controlled by $N_N(\hat T)$.
By induction, we have $l(d)=2$.
Let $(S_d,\,d_{S_d})$ be the unique essential object in
${\cal F}_{(\hat T,\,d_{\hat T})}(N,\,d)$.
By Proposition \ref{fusion system}, $Q_0\leq Z(S_d)$ and $S_d\leq S$.
On the other hand,
by Lemma \ref{lower defect group C_S(E_S)} $S_d=Q_d\times T$,
thus $T\cap Q=1$.
Now we use the fourth and fifth paragraphs of
the proof of Lemma \ref{Tctrlb} and prove $T=C_S(E_S)$ for a suitable $E_S$.

Assume that $N=G$.
Then $T$ is contained in $S$.
More precisely,  $T$ is contained in $C_S(E_S)$.
Otherwise,  $1\neq[T,\,E_S]\leq T\cap Q$.
Let $Q_{b_T}$ be a hyperfocal subgroup of $(b_T)^{TC_G(T)}$ contained in $Q$.
Assume that $Q_{b_T}\neq 1$.
Then the Sylow $3$-subgroup of $N_{TC_G(T)}(Q_{b_T})/C_{TC_G(T)}(Q_{b_T})$
is nontrivial.
On the other hand, since $T\cap Q\neq1$,
we have $1\neq T\cap Q_0\subset Q_{b_T}$.
Clearly $T\cap Q_0$ is in the center of $TC_G(T)$.
That forces the Sylow $3$-subgroup of
$N_{TC_G(T)}(Q_{b_T})/C_{TC_G(T)}(Q_{b_T})$ to act trivially on $T\cap Q_0$.
That is impossible.
So the block $(b_T)^{TC_G(T)}$ is nilpotent.
Since $0\neq m(b,\,T)\leq m\big((b_T)^{TC_G(T)},\,T\big)$
(see \cite[Theorem 5.12]{O80}), $C_P(T)$ has to be contained in $T$.
Then $N_G(S,\,b_S)\cap SC_G(T)=SC_G(S)$.
Since the block $b$ is controlled by $N_G(S,\,b_S)$,
we have $G=C_G(T)N_G(S,\,b_S)$.
So
$G/SC_G(T)\cong N_G(S,\,b_S)/\big(N_G(S,\,b_S)\cap SC_G(T)\big)\cong N_G(S,\,b_S)/SC_G(S)\cong \mathrm{S}_3$.
Denote by $H$ the normal subgroup of $G$ such that
$H$ contains $SC_G(T)$ and $|H:SC_G(T)|=3$.
Since the block $(b_T)^{SC_G(T)}$ is nilpotent,
$l\big((b_T)^H\big)=3$
by the structure of extensions of nilpotent blocks (see \cite{KP}).
So we have $l(b)=2$ or $3$.
If $l(b)=2$,
by Lemma \ref{lower defect group C_S(E_S)}
we have $T=C_S(E_S)$.
This is a contradiction.
Suppose that $l(b)=3$.
Denote by $C$ and $C_H$ the Cartan matrices of
the blocks $b$ and $(b_T)^H$ respectively.
Then it is easy to check that $C=2C_H$.
This implies that $m(b,\,1)\neq0$.
By the equality (\ref{formular of l(b)})
and Lemma \ref{lower defect group C_S(E_S)},
we still have $T=C_S(E_S)$.
This causes a contradiction.

Now we have $T\leq C_S(E_S)$ and $G=PC_G(T)$.
Let $\bar{b}$ be the unique block of $\bar{G}=G/T$ corresponding to $b$.
Then $\bar{P}=P/T$ is a defect group of $\bar{b}$ and
$\bar{Q}=QT/T\cong Q$ is a hyperfocal subgroup of $\bar{b}$.
At the same time, by Lemma \ref{ldg&qg},
we have $m(\bar{b},\,1)=m(b,\,T)\neq 0$.
Since $Q$ is strictly contained in $P$, by Proposition \ref{m(b,1)}
we have $|P:QT|=|\bar{P}:\bar{Q}|=2$.
So $QT=S$ and $T=C_S(E_S)$.

In conclusion, we always obtain that
$T=C_S(E_S)$ for a suitable $E_S$.
Now the lemma follows from Lemma \ref{lower defect group C_S(E_S)} and
the equality (\ref{formular of l(b)}).
\end{proof}

\begin{lem}\label{Ordinaryctrl}
Assume that the block $b$ is controlled by $N_G(P)$ and
that $Q$ is contained in the center of $P$.
We have $k(b)=k(c)=k(b_0)$.
\end{lem}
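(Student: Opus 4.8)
The plan is to compute $k(b)$, $k(c)$ and $k(b_0)$ from Brauer's formula that expresses the number of irreducible ordinary characters of a block as the sum, over its Brauer elements, of the numbers of irreducible Brauer characters of the attached local blocks, and then to observe that for these three blocks the three sums run over the same index set and have termwise equal summands, because everything that enters is determined by the common Brauer category. Concretely, I would recall (see \cite[Chapter V]{F82} or \cite[Chapter 5]{NT89}) that $k(b)=\sum_{(u,\,e)}l(e)$, where $(u,\,e)$ runs over a set of representatives for the $G$-conjugacy classes of $b$-Brauer elements, that is, pairs with $u$ a $2$-element of $G$ and $e$ a block of $C_G(u)$ with $e^G=b$. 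Since $b$ is controlled by $N_G(P)$, every such class has a representative $(u,\,b_u)$ with $u\in P$ and $\big(\langle u\rangle,\,b_u\big)$ the $b$-Brauer pair contained in $(P,\,b_P)$, and by Alperin's fusion theorem two such representatives are $G$-conjugate precisely when the underlying $2$-elements are $N_G(P,\,b_P)$-conjugate; hence the index set is identified with the set $\Sigma$ of $N_G(P,\,b_P)$-orbits on the $2$-elements of $P$. The same discussion applies to $b_0$ (a block of $N_G(P)$ with normal defect group $P$) and to $c$ (a block of $N_G(Q)$ with defect group $P$, controlled by $N_G(P,\,b_P)\le N_G(Q)$), and, since $b$, $b_0$ and $c$ share the Brauer category $\mathcal F:=\mathcal F_{(P,\,b_P)}(G,\,b)$ (for $c$ one uses \cite[Theorem 2]{W14} together with $Q\le Z(P)$ being the hyperfocal subgroup), the index set of Brauer elements is the same set $\Sigma$ for all three blocks.

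Next I would fix $u\in P$ with $\langle u\rangle$ fully $\{1\}$-normalized in $\mathcal F$, so that $C_P(u)$ is a defect group of $b_u$. By \cite[Corollary 3.6]{P01} --- equivalently by Lemma \ref{inherition} applied with $R=\langle u\rangle$, $K=\{1\}$ --- the Brauer category of $b_u$ is the centralizer subsystem $C_{\mathcal F}(\langle u\rangle)$ over $C_P(u)$, and it is controlled by the normalizer of $C_P(u)$ in $C_G(u)$. Its hyperfocal subgroup $Q_u$ is contained in $Q$ by Lemma \ref{hyperfocal subgroup of local subgroup}, and $Q_u\le Z(C_P(u))$ because $Q\le Z(P)$. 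Arguing as in the proof of Lemma \ref{ctrlb}: $Q_u$ is a subgroup of $\mathbb Z_{2^n}\times\mathbb Z_{2^n}$, hence $Q_u\cong\mathbb Z_{2^a}\times\mathbb Z_{2^b}$ with $a\ge b\ge 0$; if $a>b$ then $\mathrm{Aut}(Q_u)$ is a $2$-group, so $b_u$ is nilpotent by Lemma \ref{Nilpotent} and $Q_u=1$. Thus either $Q_u=1$, whence $b_u$ is nilpotent and $l(b_u)=1$, or $Q_u\cong\mathbb Z_{2^m}\times\mathbb Z_{2^m}$ with $m\ge 1$, whence Lemma \ref{ctrlb} applied to $b_u$ gives $l(b_u)=3$. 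The relevant point is that this dichotomy is governed only by whether the hyperfocal subgroup of the subsystem $C_{\mathcal F}(\langle u\rangle)$ is trivial, which depends on $\mathcal F$ and the class of $u$ in $\Sigma$ alone; hence the local blocks of $c$ and of $b_0$ attached to the same $u$ have the same number of irreducible Brauer characters as $b_u$.

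Summing over $\Sigma$ then gives
\[
k(b)=\sum_{u\in\Sigma}l(b_u)=\#\{u\in\Sigma:Q_u=1\}+3\,\#\{u\in\Sigma:Q_u\ne 1\},
\]
and the right-hand side is unchanged when $b$ is replaced by $b_0$ or by $c$; therefore $k(b)=k(c)=k(b_0)$. The main obstacle I anticipate is the bookkeeping in the first two steps: one must simultaneously (i) identify the three Brauer categories and parametrize the Brauer elements of all three blocks by the same set $\Sigma$, (ii) identify each local Brauer category with a centralizer subsystem of $\mathcal F$ via \cite[Corollary 3.6]{P01}, and (iii) carry out the elementary classification of the possible hyperfocal subgroups $Q_u$ inside $\mathbb Z_{2^n}\times\mathbb Z_{2^n}$ so that Lemma \ref{ctrlb} can be invoked to pin $l(b_u)$ down to $1$ or $3$; none of this is deep, but it has to be arranged so that the three sums agree term by term.
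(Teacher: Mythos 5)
Your proposal is correct and takes essentially the same approach as the paper: decompose $k$ into a sum of $l$'s over Brauer elements parametrized by $N_G(P,\,b_P)$-classes in $P$ (the same set for $b$, $b_0$ and $c$ since they share the Brauer category, using \cite[Theorem 2]{W14} for $c$), identify the local Brauer categories via \cite[Corollary 3.6]{P01} and Lemma \ref{inherition}, and use the trivial-versus-homocyclic dichotomy (Lemma \ref{Nilpotent}, Lemma \ref{hyperfocal subgroup of local subgroup}) together with Lemma \ref{ctrlb} to pin each local contribution to $1$ or $3$. The only cosmetic difference is that the paper treats the local blocks of $b_0$ through the inertial index and the structure theorem for blocks with normal defect group, whereas you apply the same dichotomy uniformly to all three blocks; both are valid.
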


\begin{proof}
For any $u\in P$, set $d=b_{\langle u\rangle}$, $P_u=C_P(u)$ and $C=C_G(u)$,
and let $e_u$ be the inertial index of the block $d$.
By Lemma \ref{inherition} $(P_u,\,d_{P_u})=(P_u,\,b_{P_u})$ is a
maximal $d$-Brauer pair and the block $b_u$ is controlled by $N_C(P_u,\,d_{P_u})$. By Lemma \ref{hyperfocal subgroup of local subgroup},
the hyperfocal subgroup $Q_d$ of the block $d$ with respect to $(P_u,\,d_{P_u})$
is contained in $Q$.
If $Q_d=1$, then the block $d$ is nilpotent and $l(d)=e_u=1$.
If $Q_d\neq 1$, then $Q_d$ has to be isomorphic to
$\mathbb Z_{2^m}\times \mathbb Z_{2^m}$ for some $1\leq m\leq n$;
since $Q$ is contained in the center of $P$,
$Q_d$ is contained in the center of $P_u$;
by Lemmas \ref{inherition}, \ref{inertial subgroup of local subgroups} and \ref{ctrlb}, we have $l(d)=e_u=3$.
In a word, we have $l(d)=e_u$.
On the other hand,
let $(u,\, b^0_{\langle u\rangle})$ be
the $b_0$-Brauer element contained in the maximal $b_0$-Brauer pair $(P,\,b_P)$ and set $b^0_u=b^0_{\langle u\rangle}$.
By \cite[Corollary 3.6]{P01},
the Brauer categories of the block $b^0_u$ and $d$ are equivalent.
Then $e_u$ is also the inertial index of the block $b^0_u$.
By the structure theorem of blocks with normal defect group,
we have $l(b^0_u)=e_u$.
Therefore we have $l(b_u)=l(b^0_u)$ and $k(b)=k(b_0)$.
Similarly $k(c)=k(b_0)$.
\end{proof}

\begin{lem}\label{Ordinarynotctrl}
Assume that the block $b$ is not controlled by $N_G(P)$,
that $Q$ is contained in the center of $S$ and that the order of $Q$ is less than $16$. We have $k(b)=k(c)$.
\end{lem}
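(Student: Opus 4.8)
The plan is to reduce $k(b)$ to a count of characters in local subgroups by running the same "$p$-section" / Brauer-element analysis used in Lemma \ref{Ordinaryctrl}, but now accounting for the extra essential subgroup $S$. First I would fix, for each $2$-element $u\in P$, the Brauer element $(u,\,b_{\langle u\rangle})$ contained in $(P,\,b_P)$ and set $d=b_{\langle u\rangle}$, $P_u=C_P(u)$, $C=C_G(u)$. By Lemma \ref{hyperfocal subgroup of local subgroup} the hyperfocal subgroup $Q_d$ of $d$ (with respect to the maximal $d$-Brauer pair $(P_u,\,d_{P_u})$) is contained in $Q$, hence either trivial or isomorphic to $\mathbb{Z}_{2^m}\times\mathbb{Z}_{2^m}$ for some $1\le m\le n$ by Lemma \ref{Nilpotent}. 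In the first case $d$ is nilpotent and $l(d)=1$. In the second case $Q_d$ lies in $Z(S)\cap P_u\subseteq Z(\hat T)$-type position; one then splits on whether the Brauer category ${\cal F}_{(P_u,\,d_{P_u})}(C,\,d)$ is controlled by $N_C(P_u)$ or not. If it is controlled, Lemma \ref{ctrlb} gives $l(d)=3$; if it is not, Lemma \ref{nctrlb} (applicable since $|Q_d|\le|Q|<16$ and $Q_d\le Z(S_d)$ by Proposition \ref{fusion system}) gives $l(d)=2$.

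The second step is to match these local invariants with the corresponding ones for $c=$ the Brauer correspondent of $b$ in $N_G(Q)$. For each $u$ one has a $c$-Brauer element $(u,\,c_{\langle u\rangle})$ and, by \cite[Corollary 3.6]{P01}, the Brauer categories of $c_{\langle u\rangle}$ and of $d=b_{\langle u\rangle}$ are equivalent (both are the $Q$-local data cut out by the same fusion-theoretic normalizer). In particular $l(c_{\langle u\rangle})=l(b_{\langle u\rangle})$ for every $u$. Then by the standard decomposition $k(b)=\sum_{u}l(b_{\langle u\rangle})$ and $k(c)=\sum_{u}l(c_{\langle u\rangle})$, where $u$ runs over representatives of $G$-classes (resp.\ $N_G(Q)$-classes) of $2$-elements — these index sets being matched via the Brauer-pair correspondence of \cite[Theorem 2]{W14} and \cite[Corollary 3.6]{P01} — we conclude $k(b)=k(c)$.

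The main obstacle I anticipate is bookkeeping the case distinction for $l(d)$ cleanly: unlike in Lemma \ref{Ordinaryctrl}, the block $d$ need \emph{not} be controlled by the normalizer of its defect group, so one must genuinely verify the hypotheses of Lemma \ref{nctrlb} locally — namely that $Q_d$ sits inside the center of the (unique) essential subgroup of the local fusion system ${\cal F}_{(P_u,\,d_{P_u})}(C,\,d)$, which should follow from Proposition \ref{fusion system} applied to $C$ together with $Q\le Z(S)$, but requires identifying that essential subgroup as $C_{P_u}(Q_0)$. A secondary subtlety is ensuring the equivalence of Brauer categories between the $b$-side and the $c$-side is compatible with the Brauer-element (not just Brauer-pair) structure, so that the inertial data and hence $l(\cdot)$ really agree term by term. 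Once these are in place, the summation identities close the proof exactly as in Lemma \ref{Ordinaryctrl}.
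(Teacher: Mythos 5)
Your proposal is correct in substance and uses the same engine as the paper: decompose $k(\cdot)$ over Brauer elements, determine each local $l$ by the trichotomy nilpotent / controlled with central homocyclic hyperfocal (Lemma \ref{ctrlb}, giving $3$) / not controlled (Lemma \ref{nctrlb}, giving $2$), and transfer term by term through an equivalence of Brauer categories. The one genuine difference is the routing: the paper never compares $b$ with $c$ directly, but interposes the block $b'=(b_S)^{N_G(S,\,b_S)}$ and proves $k(b)=k(b')$ and $k(c)=k(b')$, which has the advantage that on the $b$-side the local equivalence ${\cal F}_{(P_u,\,b_{P_u})}(C_G(u),\,f)\simeq{\cal F}_{(P_u,\,b'_{P_u})}(C_{G'}(u),\,f')$ comes straight from \cite[Corollary 3.6]{P01} inside the single fusion system of $b$ (using that $b$ is controlled by $N_G(S,\,b_S)$). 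For your direct $b$-versus-$c$ comparison, \cite[Corollary 3.6]{P01} alone is not enough: you must first identify the fusion systems of $b$ and of $c$ via \cite[Theorem 2]{W14} together with the Fong--Reynolds passage between $N_G(Q)$ and $N_G(Q,\,b_Q)$, and only then apply Corollary 3.6 on each side; you gesture at this but the citation as written is misplaced. Two further points you should make explicit (the paper handles them via its split on whether $P_u\leq S$): equivalence of Brauer categories never by itself yields $l(c_{\langle u\rangle})=l(b_{\langle u\rangle})$ --- it is the trichotomy, run on \emph{both} sides, that does --- and in the controlled branch Lemma \ref{ctrlb} needs the full hyperfocal $Q_d$ inside $Z(P_u)$, which you get by observing that $Q_{d,0}=Q_0$, so control forces $P_u\leq C_P(Q_0)=S$ and then $Q_d\leq Q\leq Z(S)$ centralizes $P_u$; dually, when $P_u\not\leq S$ the essential subgroup of the local system is $C_{P_u}(Q_0)=P_u\cap S$, in whose center $Q_d$ lies, so Lemma \ref{nctrlb} applies. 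With these bridges filled in, your summation argument closes exactly as the paper's does.
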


\begin{proof}
Since the block $b$ is not controlled by $N_G(P)$,
$\mathcal{F}_{(P,\,b_P)}(G,\,b)$ has only one essential object $(S,\,b_S)$ and
it is equal to $\mathcal{F}_{(P,\,b_P)}\big(N_G(S,\,b_S),\,b_S\big).$
For any $u\in P$, set $C=C_G(u)$ and denote by $f$ the block of $C$ such that
the pair $(\langle u\rangle,\,f)$ is the $b$-Brauer pair contained in $(P,\,b_P)$;
set $G'=N_G(S,\,b_S)$, $C'=C_{G'}(u)$ and $b'=(b_S)^{G'}$,
and denote by $f'$ the block of $C'$ such that $(\langle u\rangle,\,f')$ is the unique $b'$-Brauer pair contained in $(P,\,b_P)$.
For any $R\leq P$, set $R_u=C_R(u)$.
We assume that the pair $(\langle u\rangle,\,f)$ is fully normalized in
$\mathcal{F}_{(P,\,b_P)}(G,\,b)$.
Then the pair $(\langle u\rangle,\,f')$ is fully normalized in
$\mathcal{F}_{(P,\,b_P)}(G',\,b')$.
Denote by $(P_u,\,b'_{P_u})$ the $b'$-Brauer pair contained in
the $b'$-Brauer pair $(P,\,b_P)$.
For any subgroup $T$ of $P_u$,
denote by $(T,\,f_T)$ the $f$-Brauer pair contained in $(P_u,\,b_{P_u})$ and
by $(T,\,f'_T)$ the $f'$-Brauer pair contained in $(P_u,\,b'_{P_u})$.
By \cite[Corollary 3.6]{P01}, the correspondence $(T,\,f_T)\mapsto (T,\,f'_T)$ induces an equivalence between the Brauer categories
${\cal F}_{(P_u,\, b_{P_u})}(C,\, f)$ and ${\cal F}_{(P_u,\, b'_{P_u})}(C',\, f')$.

We are going to prove $l(f)=l(f')$.
By Lemma \ref{hyperfocal subgroup of local subgroup},
the hyperfocal subgroup $Q_f$ of the block $f$ with respect to
the maximal $f$-Brauer pair $(P_u,\,b_{P_u})$ is contained in $Q$.
Assume that $Q_f$ is of the form
$\mathbb Z_{2^i}\times \mathbb Z_{2^j}$ with $i\neq j$.
Then the block $f$ is nilpotent and $Q_f$ is equal to $1$.
That contradicts with $Q_f$ bigger than $1$.
Therefore $Q_f$ is of the form $\mathbb Z_{2^i}\times \mathbb Z_{2^i}$.
Assume that $i=0$.
Then $Q_f$ is equal to $1$,
the block $f$ is nilpotent and $l(f)=1$.
Since the Brauer categories ${\cal F}_{(P_u,\, b_{P_u})}(C,\, f)$ and
${\cal F}_{(P_u,\, b'_{P_u})}(C',\, f')$ are equivalent,
the block $f'$ is nilpotent and $l(f')=1=l(f)$.
Therefore in the sequel, we may assume that $i>0$.

Assume that $P_u\leq S$.
Since $P_u$ is contained in $S$,
by Proposition \ref{fusion system} the block $f$ is controlled by $N_C(P_u)$.
By Lemma \ref{ctrlb}, $l(f)$ is equal to $3$.
Since the Brauer categories ${\cal F}_{(P_u,\, b_{P_u})}(C,\, f)$ and
${\cal F}_{(P_u,\, b'_{P_u})}(C',\, f')$ are equivalent,
the block $f'$ is controlled by $N_{C'}(P_u)$ and $l(f')$ is equal to $3$ too.
Therefore we have $l(f)=l(f')$.

Now assume that $P_u$ is not contained in $S$.
By Proposition \ref{fusion system}, the block $f$ is not controlled by $N_C(P_u)$ and $l(f)$ is equal to $2$.
Since the Brauer categories ${\cal F}_{(P_u,\, b_{P_u})}(C,\, f)$ and
${\cal F}_{(P_u,\, b'_{P_u})}(C',\, f')$ are equivalent,
the block $f'$ is not controlled by $N_{C'}(P_u)$ and $l(f')$ is equal to $2$.
Therefore we have $l(f)=l(f')$.

Up to now, we already prove $l(f)=l(f')$ in general.
Then we have $k(b)=k(b')$.
Similarly, $k(c)=k(b')$.
Thus $k(b)=k(c)$.
\end{proof}

Now {\bf Theorem} \ref{MT} follows from Proposition \ref{fusion system} and
Lemmas \ref{ctrlb}, \ref{nctrlb}, \ref{Ordinaryctrl} and \ref{Ordinarynotctrl}.
\vspace{0.5cm}

As an application of Theorem \ref{MT},
we show that Alperin Weight Conjecture (see \cite{A87}) holds in our setting.
The following consequence is more or less well-known.

\begin{lem}\label{weight}
Assume that $R$ is a normal subgroup of $P$ and
that the block $b$ is controlled by $N_G(R,\,b_R)$.
Then for any weight $(T,\,W)$ of the block $b$,
we have $R\subseteq T$.
\end{lem}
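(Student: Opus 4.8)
The plan is to analyze the structure of a weight $(T,\,W)$ of the block $b$ and use the hypothesis that $b$ is controlled by $N_G(R,\,b_R)$ to force $R\subseteq T$. Recall that a weight $(T,\,W)$ consists of a $p$-subgroup $T$ of $G$ together with a projective simple $k[N_G(T)/T]$-module $W$ lying in a block of $N_G(T)$ whose Brauer correspondent in $G$ is $b$; in particular $T$ is the defect group of that block of $N_G(T)$, and there is a $b$-Brauer pair $(T,\,b_T)$ with $T$ self-centralizing modulo its center, so that $(T,\,b_T)$ is (up to $G$-conjugacy) a Brauer pair contained in $(P,\,b_P)$. After conjugating we may assume $(T,\,b_T)\subseteq (P,\,b_P)$.

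First I would translate the control hypothesis into a fusion-theoretic statement: since $b$ is controlled by $N_G(R,\,b_R)$, every morphism in the Brauer category $\mathcal{F}_{(P,\,b_P)}(G,\,b)$ is induced by conjugation by an element of $N_G(R,\,b_R)$, hence in particular fixes $R$ (note $R\trianglelefteq P$ means $(R,\,b_R)\trianglelefteq (P,\,b_P)$ and $R$ is normal in the fusion system, i.e.\ a strongly closed, indeed central-type normal subgroup in the category sense). Next I would consider the $b$-Brauer pair $(RT,\,b_{RT})$: because $R$ is normalized by everything in $N_G(T,\,b_T)$ in the relevant local subgroup — more precisely, the block of $N_G(T)$ determined by $W$ has defect group $T$, and one shows $R$ normalizes this configuration — the product $RT$ is again a $p$-subgroup with an associated Brauer pair. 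The key point is that $W$ being a \emph{projective} simple $k[N_G(T)/T]$-module forces $O_p(N_G(T)/T)=1$ in the relevant block, i.e.\ $T=O_p$ of the block of $N_G(T)$; combined with $R$ being normal in $P$ hence acting on $T$, one deduces $R\subseteq T$, since otherwise $RT/T$ would be a nontrivial normal $p$-subgroup of (the relevant quotient inside) $N_G(T)/T$ contradicting projectivity of $W$.

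Concretely, the steps in order: (1) reduce to $(T,\,b_T)\subseteq (P,\,b_P)$ by $G$-conjugacy and recall $T$ is self-centralizing; (2) using $R\trianglelefteq P$ and control by $N_G(R,\,b_R)$, show $R$ normalizes $(T,\,b_T)$, so $R\leq N_G(T,\,b_T)$ and $RT$ is a $p$-subgroup of $N_G(T,\,b_T)$ with $RT/T$ a $p$-subgroup of $N_G(T)/T$; (3) check $RT/T$ is normal in the image of $N_G(T,\,b_T)$ in $N_G(T)/T$, using that $R$ is strongly closed / normal in the fusion system and that control by $N_G(R,\,b_R)$ pins down the fusion of $T$; (4) invoke that the weight module $W$ lies in a block of $N_G(T)/T$ of defect zero (equivalently $T$ is a defect group upstairs), so the relevant block has trivial normal $p$-subgroup, forcing $RT/T=1$, i.e.\ $R\subseteq T$.

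The main obstacle I anticipate is step (3): making precise that $R$, being normal in $P$ and with $b$ controlled by $N_G(R,\,b_R)$, forces $RT/T$ to be normal in the appropriate quotient of $N_G(T,\,b_T)$ — this requires carefully relating the local subgroup $N_G(T,\,b_T)$ to the controlling subgroup $N_G(R,\,b_R)$ and using the self-centralizing property of $(T,\,b_T)$ together with the definition of control (every $\mathcal{F}$-morphism on subgroups of $P$ is realized inside $N_G(R,\,b_R)$, hence stabilizes $R$). Once that normality is in place, the conclusion $R\subseteq T$ is essentially the standard fact that a weight has no nontrivial normal $p$-subgroup acting "from outside" — i.e.\ a block containing a projective simple module for a $p$-local quotient has that $p$-subgroup as its full defect group.
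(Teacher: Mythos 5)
Your overall strategy (if $R\not\subseteq T$, manufacture a nontrivial normal $p$-subgroup sitting above $T$ and contradict the vertex/projectivity condition on the weight module) is the same as the paper's, but your execution has a genuine gap right at the start. In step (2) you assert that $R\trianglelefteq P$ forces $R$ to normalize $(T,\,b_T)$, so that $RT/T$ is a $p$-subgroup of $N_G(T)/T$. This is false in general: normality of $R$ in $P$ (or normality of $R$ in the fusion system) gives $T\leq N_P(R)$, i.e.\ $TR$ is a group with $R\trianglelefteq TR$, but it does \emph{not} give $R\leq N_G(T)$, so $RT/T$ need not even be a group, let alone a normal $p$-subgroup of $N_G(T)/T$. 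The correct replacement, which is what the paper uses, is $X=N_{TR}(T,\,b_T)$: if $R\not\subseteq T$ then $T\lneqq TR$ and hence $T\lneqq X$ by the normalizer-growth property in $p$-groups (note $X=N_{TR}(T)$ since $TR\leq P$ stabilizes the Brauer pairs inside $(P,\,b_P)$). Moreover, the normality you defer to step (3) — and flag as the main obstacle — is exactly where the control hypothesis must be spent, and you cannot hope to get normality in all of $N_G(T)/T$: control of the Brauer category only yields the factorization $N_G(T,\,b_T)=C_G(T)\big(N_G(R,\,b_R)\cap N_G(T,\,b_T)\big)$, from which one gets $XC_G(T)\trianglelefteq N_G(T,\,b_T)$ (the elements of $N_G(R,\,b_R)\cap N_G(T,\,b_T)$ normalize $TR$ and hence $X$, and one passes modulo $C_G(T)$), i.e.\ normality only inside the block stabilizer, not inside $N_G(T)$.

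The paper then finishes differently from your step (4): instead of a bare ``defect zero has no normal $p$-subgroup'' argument in $N_G(T)/T$, it works with $W$ as a simple $kN_G(T,\,b_T)b_T$-module of vertex $T$; the selfcentralizing condition makes $b_T$ a block of $C_G(T)$ with central defect group $Z(T)$, hence nilpotent, so the block of $L=XC_G(T)$ is nilpotent with a unique simple module $U$ whose vertex is $X$; Clifford theory for the normal subgroup $L\trianglelefteq N_G(T,\,b_T)$ forces the vertex of $U$ to lie in a vertex of $W$, i.e.\ $X\leq T$ up to conjugacy, contradicting $T\lneqq X$. So to repair your proof you would need to (a) replace $RT/T$ by $N_{TR}(T,\,b_T)$, (b) actually prove the normality statement inside $N_G(T,\,b_T)$ using the control factorization, and (c) either transport your defect-zero argument through Fong--Reynolds to $N_G(T,\,b_T)$ or argue with vertices as the paper does; as written, the proposal does not establish the key normality and rests on the incorrect claim that $R$ normalizes $T$.
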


\begin{proof}
  Let $(T,\,W)$ be a weight for the block $b$,
  namely, $W$ is a simple $kN_G(T,\,b_T)b_T$-module
  with the vertex $T$.
  Since the block $b$ is controlled by $N_G(R,\,b_R)$,
  $N_G(T,\,b_T)=C_G(T)\big(N_G(R,\,b_R)\cap N_G(T,\,b_T)\big)$.
  If $R\nleq T$,
  then $T\lneqq N_{TR}(T,\,b_T)$.
  Set $X=N_{TR}(T,\,b_T)$.
  Since the vertex of $W$ is $T$,
  the defect of the block $b_T$ is $Z(T)$
  and therefore the block $b_T$ is nilpotent.
  Hence the block $(b_T)^{XC_G(T)}$ is also nilpotent.
  Since $N_G(T,\,b_T)=C_G(T)\big(N_G(R,\,b_R)\cap N_G(T,\,b_T)\big)$,
  $XC_G(T)\unlhd N_G(T,\,b_T)$.
  Set $L=XC_G(T)$.
  Let $U$ be the unique simple $kLb_Q$-module.
  Then $X$ has to be a vertex of $U$
  and $U$ is the unique direct summand of $W_L$.
  But the module $W$ has $T$ as the vertex.
  So $T$ has to contain the vertex of $U$.
  Hence $X=T$.
  It is a contradiction.
\end{proof}

\begin{cor}
Alperin Weight Conjecture holds for the block $b$
when one of the following conditions is satisfied:

\noindent{\bf (i)} the block $b$ is controlled by $N_G(P)$ and
$Q$ is contained in the center of $P$;

\noindent{\bf (ii)} the block $b$ is not controlled by $N_G(P)$,
$Q$ is contained in the center of $S$,
and $|Q|$ is at most $16$.
\end{cor}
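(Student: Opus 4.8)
The plan is to verify Alperin's weight conjecture in the form $l(b)=w(b)$, where $w(b)$ denotes the number of $G$-conjugacy classes of weights $(T,\,W)$ of the block $b$; since $l(b)$ has already been computed in Theorem \ref{MT} ($l(b)=3$ in case (i) and $l(b)=2$ in case (ii)), the whole task is to count weights. The first step is to restrict the possible vertices $T$ by means of Lemma \ref{weight}. In case (i) the block $b$ is controlled by $N_G(P)$, hence by $N_G(P,\,b_P)$, and taking $R=P$ in Lemma \ref{weight} forces every $b$-weight to have vertex $P$. In case (ii) Proposition \ref{fusion system}(b) provides the unique essential object $(S,\,b_S)$ with $S=C_P(Q_0)\trianglelefteq P$ of index $2$ and with $b$ controlled by $N_G(S,\,b_S)$; taking $R=S$ in Lemma \ref{weight} then forces every $b$-weight to have vertex $S$ or $P$. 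Thus $w(b)=w(b,\,P)$ in case (i) and $w(b)=w(b,\,S)+w(b,\,P)$ in case (ii), where $w(b,\,T)$ denotes the number of $G$-classes of $b$-weights with first component $T$.

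Next I would dispose of the weights of vertex $P$ once and for all. Let $b_0$ be the Brauer correspondent of $b$ in $N_G(P)$. By Brauer's first main theorem $b_0$ is the only block of $N_G(P)$ inducing $b$, so the $b$-weights of vertex $P$ are exactly the $b_0$-weights of vertex $P$; applying Lemma \ref{weight} to $b_0$ (which is trivially controlled by $N_G(P)=N_{N_G(P)}(P)$) shows that all $b_0$-weights have vertex $P$, and since $b_0$ has the normal defect group $P$, Alperin's weight conjecture holds for $b_0$ by K\"ulshammer's structure theorem for blocks with normal defect group. Hence $w(b,\,P)=w(b_0)=l(b_0)$. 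In case (i), Theorem \ref{MT}(i) gives $l(b_0)=l(b)=3$, so $w(b)=w(b,\,P)=l(b_0)=l(b)$ and the conjecture is proved in this case.

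For case (ii) it remains to show $w(b,\,S)+l(b_0)=2$. For $l(b_0)$: the inertial quotient $E_0=N_G(P,\,b_P)/PC_G(P)$ acts faithfully on $P$ and, being a $2'$-group whose commutators with $P$ lie in $Q$ (Lemma \ref{hyperfocal subgroup of local subgroup}), acts trivially on $P/Q$; an $H^1$-vanishing argument then embeds $E_0$ into $\mathrm{Aut}(Q)$, so by Lemma \ref{Aut(Q)} it has order $1$ or $3$, and one argues that in case (ii) it must be trivial, all the nontrivial fusion of $\mathcal{F}_{(P,\,b_P)}(G,\,b)$ being carried by the essential subgroup $S\neq P$; hence $b_0$ is nilpotent and $l(b_0)=1$. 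For $w(b,\,S)$: since $(S,\,b_S)$ is selfcentralizing with $Q\leq Z(S)$, the block $b_S$ of $C_G(S)$ is nilpotent (as in Lemma \ref{nil}, together with the theory of nilpotent blocks), so $(b_S)^{N_G(S,\,b_S)}$ is an extension of a nilpotent block whose inertial group surjects onto $N_G(S,\,b_S)/SC_G(S)\cong\mathrm{S}_3$; its module structure in the sense of K\"ulshammer--Puig, combined with the lower-defect-group computation of Lemma \ref{lower defect group C_S(E_S)} and the value $l(b)=2$ from Theorem \ref{MT}(ii), yields $w(b,\,S)=1$. Combining, $w(b)=w(b,\,S)+l(b_0)=1+1=2=l(b)$, as desired.

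The case (i) part is routine: everything collapses at once to Alperin's weight conjecture for the normal-defect block $b_0$, which is known, together with the identity $l(b_0)=l(b)$ from Theorem \ref{MT}(i). The real difficulty is case (ii), and it occurs in two places: first, establishing that the top inertial quotient $E_0$ is trivial, so that $b_0$ is nilpotent and contributes a single weight of vertex $P$; and, more seriously, counting the weights supported on the essential subgroup $S$, which forces one into the explicit block theory of the extended nilpotent block $(b_S)^{N_G(S,\,b_S)}$ and the lower-defect-group bookkeeping already developed above.
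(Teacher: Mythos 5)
Your overall architecture is the paper's: restrict vertices by Lemma \ref{weight}, settle case (i) by Theorem \ref{MT}(i) and the normal-defect block $b_0$, and in case (ii) count one weight at $P$ and one at $S$. Case (i) is fine. But in case (ii) the two decisive steps are asserted rather than proved, and one of them is supported by an invalid principle. First, you need $w(b,P)=1$, i.e.\ $N_G(P,b_P)=PC_G(P)$ (equivalently $b_0$ nilpotent). Your justification --- ``all the nontrivial fusion is carried by the essential subgroup $S\neq P$'' --- is not an argument: a fusion system with a proper essential subgroup can perfectly well have $\mathrm{Aut}_{\mathcal F}(P)$ of even index over $\mathrm{Inn}(P)$ containing odd-order automorphisms, so nothing formal forces $E_0=N_G(P,b_P)/PC_G(P)$ to be trivial. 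The actual reason here is specific to this configuration (and is what the paper extracts from the proof of Proposition \ref{fusion system}): the block $b_{S}$ of $PC_G(S)$ is nilpotent, so $N_G(P,b_P)\cap PC_G(S)=PC_G(P)$, whence $N_G(P,b_P)/PC_G(P)$ injects into the order-$3$ set $N_G(S,b_S)/PC_G(S)$; an order-$3$ element of $N_G(S,b_S)/SC_G(S)\cong\mathrm{S}_3$ cannot normalize the order-$2$ image of $P$, so the quotient is trivial. Your $H^1$-style reduction to $\mathrm{Aut}(Q)$ only bounds $|E_0|\in\{1,3\}$; it does not exclude $3$.

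Second, and more seriously, the count $w(b,S)=1$ is not established. The tools you cite do not do the job: Lemma \ref{lower defect group C_S(E_S)} is a lower-defect-group multiplicity at $T=C_S(E_S)$, which is what enters the proof of $l(b)=2$ via (\ref{formular of l(b)}), but lower defect multiplicities are not weight counts at $S$, and ``the module structure in the sense of K\"ulshammer--Puig \dots yields $w(b,S)=1$'' is exactly the statement to be proved. What is needed (and what the paper does) is a Clifford-theoretic analysis inside $N_G(S,b_S)$: the value $l\big((b_S)^{N_G(S,b_S)}\big)=2$ (this is Lemma \ref{nctrlb} applied to the local block, which has the same Brauer category as $b$ --- note you need this local $l$-value, not literally $l(b)$); the block $b_S$ of the subgroup $A$ with $A/SC_G(S)\cong\mathbb{Z}_3$ has exactly three simple modules, all of vertex $S$, by the K\"ulshammer--Puig theory of extensions of the nilpotent block $b_S$ of $SC_G(S)$; and then, since the unique block of $N_G(S,b_S)$ over the $A$-block has exactly two simples, the outer involution fixes exactly one $A$-simple and fuses the other two. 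One must still check the vertices: the simple lying over the fixed $A$-simple is not relatively $A$-projective (its restriction extends, so it is not a summand of the induced module), hence has vertex strictly larger than $S$, while the module induced from the fused pair has vertex $S$. Without this discrimination you cannot rule out $w(b,S)=0$ or $2$, so the identity $w(b)=2=l(b)$ is not yet proved.
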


\begin{proof}
In case (i), the corollary follows from Lemma \ref{weight} and Theorem \ref{MT}.
Now we assume case (ii).
Notice in this case we have $N_G(P,\,b_P)=PC_G(P)$
(see the proof of Proposition \ref{fusion system}).
By Lemma \ref{weight} and Theorem \ref{MT},
it suffices to show that there is only one simple $kN_G(S,\,b_S)$-module
in the block $(b_S)^{N_G(S,\,b_S)}$ with vertex $S$, up to isomorphism.
By Lemma \ref{nctrlb}, $l\big((b_S)^{N_G(S,\,b_S)}\big)=2$.
Notice that $N_G(S,\,b_S)/SC_G(S)\cong {\rm S}_3$.
Let $A$ be the subgroup of $N_G(S,\,b_S)$ such that
$SC_G(S)\leq A$ and $A/SC_G(S)\cong \mathbb Z_3$.
Since the block $b_S$ of $SC_G(S)$ is nilpotent,
the block $b_S$ of $A$ has $3$ simple modules, up to isomorphism,
which have the same vertex $S$.
Since $A\unlhd N_G(S,\,b_S)$ and $|N_G(S,\,b_S):A|=2$,
by Clifford's Theorem,
there exists a simple $kA$-module $V$ such that $V^{N_G(S,\,b_S)}$ is the unique simple $kN_G(S,\,b_S)$-module belonging to the block $(b_S)^{N_G(S,\,b_S)}$
with vertex $S$, up to isomorphism.
We are done.
\end{proof}

\end{document}